\numberwithin{equation}{subsection}
\newtheorem{theorem}[subsection]{Theorem}
\newtheorem{definition}[subsection]{Definition}
\newtheorem{classification-theorem}[subsection]{Classification Theorem}
\newtheorem{decomposition-theorem}[subsection]{Decomposition Theorem}
\newtheorem{proposition-definition}[subsection]{Proposition-Definition}
\newtheorem{definition-proposition}[subsection]{Definition-Proposition}
\newtheorem{example-definition}[subsection]{Example-Definition}
\newtheorem{periodicity-conjecture}[subsection]{Periodicity Conjecture}
\newtheorem{lemma}[subsection]{Lemma}
\newtheorem{proposition}[subsection]{Proposition}
\newtheorem{corollary}[subsection]{Corollary}
\newtheorem{example}[subsection]{Example}
\newtheorem{remark}[subsection]{Remark}
\newtheorem{Definition-Proposition}[subsection]{D\'efinition-Proposition}
\newcommand{\reminder}[1]{}
\renewcommand{\mod}{\mathrm{mod}}
\newcommand{\inj}{\mathrm{inj}}
\newcommand{\Mod}{\mathrm{Mod}\,}
\newcommand{\CM}{\mathrm{CM}}
\newcommand{\proj}{\mathrm{proj}\,}
\newcommand{\per}{\mathrm{per} }
\newcommand{\pvd}{\mathrm{pvd} }
\newcommand{\add}{\mathrm{add} }
\renewcommand{\Im}{\mathrm{Im} }
\newcommand{\tr}{\mathrm{tr}}
\newcommand{\cat}{\mathrm{cat}}
\newcommand{\sSet}{\mathrm{sSet}}
\newcommand{\dgcat}{\mathrm{dgcat}}
\newcommand{\Hqe}{\mathrm{Hqe}}
\newcommand{\pretr}{\mathrm{pretr} }
\newcommand{\im}{\mathrm{im} }
\renewcommand{\ker}{\mathrm{ker} }
\newcommand{\Q}{\mathbb{Q}}
\newcommand{\iso}{\xrightarrow{_\sim}}
\newcommand{\Id}{\mathrm{id}}
\newcommand{\Def}{\mathrm{def}\kern 0.1em}
\newcommand{\gl}{\mathrm{gldim}}
\newcommand{\dom}{\mathrm{domdim}}
\newcommand{\cogen}{\mathrm{cogen}}
\newcommand{\D}{\mathcal {D}}
\newcommand{\A}{\mathcal {A}}
\newcommand{\B}{\mathcal {B}}
\newcommand{\C}{\mathcal {C}}
\newcommand{\E}{\mathcal {E}}
\newcommand{\F}{\mathcal {F}}
\newcommand{\G}{\mathcal {G}}
\newcommand{\J}{\mathcal {J}}
\newcommand{\I}{\mathcal {I}}
\newcommand{\M}{\mathcal {M}}
\newcommand{\N}{\mathcal {N}}
\newcommand{\V}{\mathcal {V}}
\newcommand{\T}{\mathcal T}
\renewcommand{\P}{\mathcal P}
\newcommand{\X}{\mathcal X}
\newcommand{\Y}{\mathcal Y}
\newcommand{\Hom}{\mathrm{Hom}}
\newcommand{\Ext}{\mathrm{Ext}}
\newcommand{\End}{\mathrm{End}}
\renewcommand{\L}{\mathcal L}
\renewcommand{\phi}{\varphi}
\newcommand{\K}{\mathcal{K}}
\newcommand{\inc}{\mathrm{inc}}
\renewcommand{\tilde}[1]{\widetilde{#1}}
\renewcommand{\Q}{\mathcal{Q}}
\begin{document}
\title[Iyama--Solberg correspondence for exact dg categories]{Iyama--Solberg correspondence for exact dg categories}
\author[Xiaofa Chen] {Xiaofa Chen}
\address{University of Science and Technology of China, Hefei, P.~R.~China}
\email{cxf2011@mail.ustc.edu.cn}

\subjclass[2010]{18G25, 18E30, 16E30, 16E45}
\date{\today}

\keywords{extriangulated category, $d$-minimal Auslander--Gorenstein category, exact dg category, derived dg category, subcategory stable under extensions, Iyama--Solberg correspondence}%

\begin{abstract} 
We generalize the notions of $d$-cluster tilting pair and $d$-Auslander exact dg category to $d$-precluster tilting triple and $d$-minimal Auslander--Gorenstein exact dg category. 
We give a bijection between equivalence classes of $d$-precluster tilting triples and equivalence classes of $d$-minimal Auslander--Gorenstein exact dg categories. Our bijection generalizes 
Iyama--Solberg correspondence for module categories.
\end{abstract}

\maketitle

\section{Introduction}
Let $k$ be a commutative artinian ring. 
An Artin $k$-algebra is a $k$-algebra which is finitely generated as a $k$-module.
Let $A$ be an Artin $k$-algebra. 
We denote by $\mod A$ the category of finitely generated right $A$-modules. 
Let $d\geq 1$ be an integer. 
Recall that an $A$-module $M$ is {\em $d$-cluster-tilting}~\cite{Iyama07a} if 
\begin{align*}
\add M&=\{X\in\mod A\mid\Ext^{i}(M,X)=0 \text{ for $1\leq i\leq d-1$} \}\\
&=\{Y\in\mod A\mid \Ext^{i}(Y,M)=0 \text{ for $1\leq i\leq d-1$} \}.
\end{align*}
Iyama \cite{Iyama07} generalized the classical Auslander correspondence \cite{Auslander71} to the so-called Auslander--Iyama correspondence between
\begin{itemize} 
\item[-] Morita classes of $(A, M)$, where $A$ is an Artin $k$-algebra and $M$ a $d$-cluster tilting $A$-module, and
\item[-] Morita classes of $d$-Auslander algebras $\Gamma$, i.e.~an Artin $k$-algebra $\Gamma$ such that 
\begin{equation}\label{def:Auslander}
\gl \Gamma\leq d+1\leq \dom \Gamma.
\end{equation}
\end{itemize}
The correspondence takes $(A,M)$ to $\Gamma=\End_{A}(M)$. Here $\dom\Gamma$ denotes the dominant dimension of $\Gamma$ and $\gl\Gamma$ denotes the global dimension of $\Gamma$.
Recall that the {\em dominant dimension} of an abelian category $\A$ is the largest $n\in \mathbb N\cup \{\infty\}$ such that any projective object in $\A$ has an injective coresolution whose first $n$ terms are also projective. 
The {\em dominant dimension} of $A$ is defined as the dominant dimension of $\mod A$. 

In \cite{Chen23a}, we extended Auslander--Iyama correspondence to the setting of exact dg categories. 
Let us explain this in more detail.  For a pair $(\P,\I)$, where $\P$ is a connective {\em additive} dg category and $\I$ is an additive dg subcategory of $\P$, and for $n\geq 0$, we denote by $\A^{(n)}_{\P,\I}$ 
the full dg subcategory of $\pretr(\P)$ consisting of the objects in 
\[
\P\ast\Sigma\P\ast\cdots\ast\Sigma^{n-1}\P\ast\Sigma^n\P\cap \ker\Ext^{\geq 1}(-,\I).
\]
The pair $(\P,\I)$ is {\em $d$-cluster tilting} if $H^0(\I)$ is covariantly finite in $H^0(\A^{(d)}_{\P,\I})$.
Following Gorsky--Nakaoka--Palu~\cite{GorskyNakaokaPalu23}, for $d\geq 0$, an exact dg category $\A$ is {\em $d$-Auslander} if the extriangulated category $H^0(\A)$ has enough projective objects and satisfies similar homological properties as in (\ref{def:Auslander}).
Then the equivalence classes of $d$-cluster tilting pairs $(\P,\I)$ correspond to the equivalence classes of connective $d$-Auslander exact dg categories.
We mention that for a $d$-cluster tilting pair $(\P,\I)$, which consists of $\I=\inj A$ for some Artin $k$-algebra $\Lambda$ and $\P=\add M$  for some $A$-module $M$, the module $M$ is $d$-cluster tilting if and only if the corresponding $d$-Auslander exact dg category is concentrated in degree zero and abelian.

The notion of {\em $d$-precluster tilting} subcategory was introduced in \cite{IyamaSolberg18} as a generalization of the notion of $d$-cluster tilting subcategory, cf.~Definition~\ref{def:d-precluster tilting}.
An $A$-module $M$ is {\em $d$-precluster-tilting} if $\add M$ is a $d$-precluster tilting subcategory of $\mod A$.
The notion of $d$-minimal Auslander--Gorenstein algebra is introduced in {\em loc.~cit} as a generalization of the $d$-Auslander property. 
An Artin $k$-algebra $\Gamma$ is {\em $d$-minimal Auslander--Gorenstein} if it satisfies 
\begin{equation*}\label{def:AuslanderGorenstein}
\Id_{\Gamma} \Gamma\leq d+1\leq \dom \Gamma
\end{equation*}
where $\Id_{\Gamma}\Gamma$ denotes the injective dimension of $\Gamma\in\mod\Gamma$.
Iyama--Solberg~\cite[Theorem 4.5]{IyamaSolberg18} showed the following
\begin{theorem}[Iyama--Solberg correspondence]\label{intro:classical} 
The map $(A, M)\mapsto \Gamma=\End_{A}(M)$ induces a bijective correspondence between 
\begin{itemize}
\item[-] Morita classes of $(A, M)$, where $A$ is an Artin $k$-algebra and $M$ is a $d$-precluster tilting $A$-module, and
\item[-] Morita classes of $d$-minimal Auslander--Gorenstein $k$-algebras $\Gamma$.
\end{itemize}
\end{theorem}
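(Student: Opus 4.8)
The plan is to realize $(A,M)\mapsto\Gamma=\End_A(M)$ as a bijection by exhibiting an explicit quasi-inverse and then matching the two homological inequalities defining ``$d$-minimal Auslander--Gorenstein'' with the axioms of $d$-precluster tilting (Definition~\ref{def:d-precluster tilting}). The backbone is the Morita--Tachikawa theory of dominant dimension $\geq 2$ together with M\"uller's double centralizer theorem: an Artin $k$-algebra $\Gamma$ has $\dom\Gamma\geq 2$ if and only if $\Gamma\cong\End_A(M)$ for an Artin $k$-algebra $A$ and a generator--cogenerator $M\in\mod A$, and then $(A,M)$ is recovered from $\Gamma$ up to Morita equivalence by $A\cong e\Gamma e$ and $M\cong\Gamma e$ (as a right $A$-module), where $e\Gamma$ is an additive generator of the projective--injective right $\Gamma$-modules, with $\End_A(\Gamma e)\cong\Gamma$. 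Granting this, the remaining content is the assertion: \emph{if $M$ is a generator--cogenerator of $\mod A$ and $\Gamma=\End_A(M)$, then $\Id_\Gamma\Gamma\leq d+1\leq\dom\Gamma$ holds exactly when $M$ is $d$-precluster tilting.}

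Suppose first that $M$ is $d$-precluster tilting. Then $M$ is a generator--cogenerator, hence $\dom\Gamma\geq 2$, and the functor $F=\Hom_A(M,-)\colon\mod A\to\mod\Gamma$ is fully faithful, restricts to an equivalence $\add M\iso\proj\Gamma$, and (being a right adjoint) is left exact and preserves injectives; since $M$ is a cogenerator, $\inj A\subseteq\add M$, so $F(\inj A)\subseteq\proj\Gamma\cap\inj\Gamma$. Building the minimal injective coresolution of $\Gamma\in\mod\Gamma$ directly, the vanishing $\Ext^i_A(M,M)=0$ for $1\leq i\leq d-1$ forces its $k$-th cosyzygy to equal $F(\Omega_A^{-k}M)$ for $1\leq k\leq d-1$, and each of $F(\Omega_A^{-1}M),\dots,F(\Omega_A^{-d}M)$ embeds into $F$ of an injective $A$-module and hence into a projective--injective $\Gamma$-module; thus the first $d+1$ terms of the coresolution are projective, i.e.\ $\dom\Gamma\geq d+1$. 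For $\Id_\Gamma\Gamma\leq d+1$ one argues dually: using the standard $k$-duality $D$ and the isomorphism $\Gamma\op\cong\End_{A\op}(DM)$, the bound reduces to a boundedness statement for an $\add M$-coresolution of the injective cogenerator $DA$ of $\mod A$, which is precisely what the cogenerator-side of the $d$-precluster tilting axioms supplies; a dimension shift then yields $\Id_\Gamma\Gamma\leq d+1$.

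Conversely, let $\Gamma$ be $d$-minimal Auslander--Gorenstein. Then $\dom\Gamma\geq d+1\geq 2$, so M\"uller's theorem produces $A:=e\Gamma e$ and the generator--cogenerator $M:=\Gamma e\in\mod A$ with $\End_A(M)\cong\Gamma$, and it remains to verify that $M$ is $d$-precluster tilting. That $M$ is a generator--cogenerator is built in, and $\add M$ is automatically functorially finite in $\mod A$ because $A$ is an Artin algebra. The vanishing $\Ext^i_A(M,M)=0$ for $1\leq i\leq d-1$ is the reverse of the cosyzygy computation above, extracted from $\dom\Gamma\geq d+1$ --- here one uses that $F$ identifies $\mod A$ with the subcategory of $\mod\Gamma$ of modules of dominant dimension $\geq 2$ and compares $\Ext$ groups in the relevant range; and the remaining orthogonality/coresolution axioms are extracted, by the dual argument, from $\Id_\Gamma\Gamma\leq d+1$. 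Finally, that the two assignments are mutually inverse on Morita classes follows from the uniqueness clause in M\"uller's theorem together with the evident Morita-invariance of $M\mapsto\End_A(M)$ and of $\Gamma\mapsto(e\Gamma e,\Gamma e)$.

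The main obstacle is the equivalence
\[
\Id_\Gamma\Gamma\leq d+1\quad\Longleftrightarrow\quad(\,\text{the cogenerator-side axioms of $d$-precluster tilting}\,),
\]
which is where the argument genuinely goes beyond the classical Auslander--Iyama correspondence: there the hypothesis is $\gl\Gamma\leq d+1$, a two-sided and very rigid condition, whereas here $\Gamma$ is only one-sided Gorenstein. One must therefore track the tail of a minimal injective coresolution $0\to\Gamma\to J^0\to\cdots\to J^{d+1}\to 0$ of $\Gamma\in\mod\Gamma$ --- whose terms $J^0,\dots,J^d$ are already projective by $\dom\Gamma\geq d+1$, so that only $J^{d+1}$ can fail to be projective --- and translate, through the double centralizer and the adjunction between $F$ and its left adjoint, between this tail and a length-$(d-1)$ $\add M$-coresolution of $DA$ together with the accompanying $\Ext$-vanishing. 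This is in essence a dimension-shifting bookkeeping with $F$, but it must be carried out with care about the range of degrees in which $F$ actually identifies $\Ext$ groups (since $F$ is only left exact) and about the sharpness of the two inequalities. Everything else --- the double centralizer identities, the exactness and injective-preservation properties of $F$, functorial finiteness of $\add M$, and the Morita bookkeeping --- is standard.
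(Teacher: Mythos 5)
The paper does not actually prove Theorem~\ref{intro:classical}: it quotes it from Iyama--Solberg and then, in Subsection~\ref{subsec:IS}, rederives it as the special case of Theorem~\ref{thm:Iyama--Solberg correspondence} in which the exact dg category $\tau_{\leq 0}\B^{(d+1)}_{\K,\P,\I}$ is concentrated in degree zero and abelian (via Propositions~\ref{prop:Quillenexact} and~\ref{prop:Abelian} and the two lemmas of Subsection~\ref{subsec:IS} showing that the triple $(\K,\add M,\inj\Lambda)$ is $d$-precluster tilting exactly when $M$ is a $d$-precluster tilting module). Your route is instead the classical one, through Morita--Tachikawa/M\"uller's double centralizer theorem and a direct matching of the two inequalities $\Id_\Gamma\Gamma\leq d+1\leq\dom\Gamma$ with the module-theoretic axioms; that is essentially the strategy of the original Iyama--Solberg proof, and the half asserting $\dom\Gamma\geq d+1\Leftrightarrow\Ext^{1\sim d-1}_A(M,M)=0$ for a generator--cogenerator $M$ is indeed the standard Auslander-type computation you describe.

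The gap is in the other half. Definition~\ref{def:d-precluster tilting} has four conditions; (i), (iii) and (iv) are accounted for by your dominant-dimension argument, but condition (ii) --- $\tau_d(\add M)\subseteq\add M$ and $\tau_d^{-}(\add M)\subseteq\add M$ --- never appears anywhere in your proposal, and it is precisely this condition that must be shown equivalent to $\Id_\Gamma\Gamma\leq d+1$ in the presence of the others. Moreover, your proposed reduction of $\Id_\Gamma\Gamma\leq d+1$ to ``a boundedness statement for an $\add M$-coresolution of the injective cogenerator $DA$'' cannot be the right target: since $M$ is a cogenerator, $DA$ already lies in $\add M$, so any such coresolution statement is vacuously satisfied and carries no content. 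The genuine translation goes through Auslander--Reiten duality: one computes $\Ext^{\geq d+2}_\Gamma(X,F(N))$ by resolving $X\in\mod\Gamma$ via presentations coming from $\add M$, and the vanishing for all $X$ is governed by formulas of the shape $\Ext^{d}_A(X,\tau_d Y)\cong D\underline{\Hom}_A(Y,X)$ --- equivalently, for $d>1$ and under (i), (iii), (iv), by the symmetry ${}^{\perp_{d-1}}(\add M)=(\add M)^{\perp_{d-1}}$ of Lemma~\ref{lem:taud}; the case $d=1$, where (iii) is empty and everything rests on $\tau^{\pm}$-stability, needs a separate argument. Until the $\tau_d$-condition is brought into the picture, the central equivalence --- which you yourself flag as the main obstacle --- remains unproved, so the proposal as it stands does not establish the theorem.
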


In this article, we continue the work in \cite{Chen23a} and generalize the above Iyama--Solberg correspondence to the setting of exact dg categories. Let $d\geq 0$ be an integer. Our key notion is the following
\begin{definition}[Definition~\ref{def:d-precluster tilting triple}]\label{intro:d-precluster tilting triple}
Let $\K$ be an additive dg category and 
$\I\subseteq \P\subseteq \K$ full additive dg subcategories.
The triple $(\K,\P,\I)$ is {\em $d$-precluster tilting} if the following properties hold 
\begin{itemize}
\item[(1)] $\P$ is connective;
\item[(2)] $H^0(\I)$ is covariantly finite in $H^0(\B_{\K,\P,\I}^{(d)})$ (cf.~below);
\item[(3)] $\K$ is extension-closed in $\tr(\K)$;
\item[(4)] $\Hom_{\tr(\K)}(\P,\Sigma^{\geq i}\K )=0$  and $\Hom_{\tr(\K)}(\K,\Sigma^{\geq i}\P )=0$ for any $i\geq 1$;
\item[(5)] for each object  $K$ in $\K$, there is a triangle in $\tr(\K)$ 
\[
K'\rightarrow P\rightarrow K\rightarrow \Sigma K'
\]
where $P\in \P$ and $K'\in\K$.
\end{itemize}
\end{definition}

For a triple $(\K,\P,\I)$, where $\K$ is an additive dg category and $\I\subset \P$ are connective additive dg subcategories of $\K$, and for $n\geq 0$, we denote by $\B^{(n)}_{\K, \P,\I}$ 
 the full dg subcategory of $\pretr(\K)$ consisting of the objects in 
\[
\K\ast\Sigma\K\ast\ldots\ast\Sigma^{n-1}\K\ast \Sigma^{n}\mathcal K\cap \ker^{\geq 1}\Ext(-,\I).
\]

\begin{theorem}[Theorem~\ref{thm:Iyama--Solberg correspondence}]\label{intro:Iyama--Solberg correspondence}
There is a canonical bijection between the following:
\begin{itemize}
\item[(1)] equivalence classes of connective exact dg categories $\A$ which are $d$-minimal Auslander--Gorenstein;
\item[(2)] equivalence classes of $d$-precluster tilting triples $(\K,\P,\I)$.
\end{itemize}
The bijection from $(1)$ to $(2)$ sends $\A$ to the triple $(\K,\P,\I)$ formed by  the full dg subcategory $\P$ on the projectives of $\A$, its full dg subcategory $\I$ of projective-injectives and the full dg subcategory $\K$ of $\D^b_{dg}(\A)$ consisting of the objects $K$  which are $(d+1)$-st syzygies of objects in $\A$ (in particular we have $\Ext^{\geq 1}(K,\P)=0$). The inverse bijection sends $(\K,\P,\I)$ to the $\tau_{\leq 0}$-truncation of the  dg subcategory $\B^{(d+1)}_{\K,\P,\I}$
 of $\pretr(\K)$.
\end{theorem}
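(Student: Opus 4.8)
The plan is to exhibit the two maps, show each lands in the stated target, and then check that the two composites are identities; the guiding heuristic is that a connective $d$-minimal Auslander--Gorenstein exact dg category is to a $d$-precluster tilting triple as a connective $d$-Auslander one is to a $d$-cluster tilting pair, so most of the machinery of \cite{Chen23a} — exact dg structures on extension-closed dg subcategories, $\ast$-products of additive dg subcategories, $\tau_{\le 0}$-truncation, and $\Hom/\Ext$-vanishing bookkeeping — carries over, the new input being the Gorenstein-type homological estimates. One preliminary remark: the dg subcategory $\B^{(n)}_{\K,\P,\I}\subseteq\pretr(\K)$ depends only on the pair $(\K,\I)$ (its defining formula is the one defining $\A^{(n)}_{\K,\I}$ in \cite{Chen23a}), and condition~(2) of Definition~\ref{def:d-precluster tilting triple} says exactly that $(\K,\I)$ is a $d$-cluster tilting pair; the extra datum $\P$, constrained by (3), (4), (5), is what promotes the associated $d$-Auslander structure to a $d$-minimal Auslander--Gorenstein one, with $H^0(\P)$ rather than $H^0(\K)$ as its subcategory of projectives.

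\emph{The map $(1)\to(2)$.} Let $\A$ be connective $d$-minimal Auslander--Gorenstein, with $\P,\I,\K$ as in the statement. Unwinding $\Id_{\A}\A\le d+1\le\dom\A$ one gets: (i) $\K$ — the $(d+1)$-st syzygies, which by the Gorenstein bound coincide with $\ker\Ext^{\ge 1}(-,\P)$ (the latter description making extension-closure manifest) — is extension-closed in $\tr(\K)$, giving (1) and (3); (ii) $\Ext^{\ge 1}(K,\P)$ for a $(d+1)$-st syzygy $K$ identifies by dimension shift with $\Ext^{\ge d+2}(-,\P)=0$, while $\Hom_{\tr(\K)}(\P,\Sigma^{\ge 1}\K)=0$ since $\P$ is projective and objects of $\K$ are honest objects of $\A$, giving (4); (iii) (5) is "$\A$ has enough projectives" transported along $\Omega$: the projective cover of $K=\Omega^{d+1}X$ has kernel $\Omega^{d+1}(\Omega X)\in\K$; (iv) $\dom\A\ge d+1$ gives the covariant finiteness in (2), the first $d$ projective-injective cosyzygy layers of the projectives supplying the needed left $\I$-approximations inside $\B^{(d)}_{\K,\P,\I}$.

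\emph{The map $(2)\to(1)$.} Given a $d$-precluster tilting triple $(\K,\P,\I)$, set $\A:=\tau_{\le 0}\B^{(d+1)}_{\K,\P,\I}$. By (3) and induction on $\ast$-length, $\B^{(d+1)}_{\K,\P,\I}$ is extension-closed in $\pretr(\K)$, hence an exact dg category, and $\tau_{\le 0}$ preserves this; $\A$ is connective because $\K$ is (forced by (1), (4), (5)). The vanishing $\Hom_{\tr(\K)}(\P,\Sigma^{\ge 1}\K)=0$ makes objects of $\P$ projective in $\A$, (5) iterated along the $\ast$-filtration provides a $\P$-deflation onto every object of $\A$, and — using that (4) does \emph{not} kill $\Hom_{\tr(\K)}(\K,\Sigma^{\ge 1}\K)$, so that objects of $\K\smallsetminus\P$ acquire non-split extensions in $\A$ — one shows $H^0(\P)$ is \emph{all} the projectives, and similarly $H^0(\I)$ the projective-injectives. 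Finally, for $\dom\A\ge d+1$ one resolves a projective by iterated left $\I$-approximations from (2), verifying that the first $d+1$ of them are admissible monomorphisms with cokernels re-entering $\B^{(d)}_{\K,\P,\I}$; for $\Id_\A\A\le d+1$, a dimension count in $\pretr(\K)$ shows this coresolution reaches an injective object after step $d+1$.

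\emph{The two composites, and the main difficulty.} For $(1)\to(2)\to(1)$: every object $X$ of $\A$ lies in $\B^{(d+1)}_{\K,\P,\I}$, seen by rolling up the triangles $\Omega^{i+1}X\to P_i\to\Omega^i X\to\Sigma\Omega^{i+1}X$ of a minimal projective resolution ($P_i\in\P\subseteq\K$, $\Omega^{d+1}X\in\K$) together with $\Ext^{\ge 1}(X,\I)=0$; conversely every object of $\B^{(d+1)}_{\K,\P,\I}$ is quasi-isomorphic to an object of $\A$, by collapsing the $\ast$-filtration to degree $0$ using $\Id_\A\A\le d+1$ and (4). For $(2)\to(1)\to(2)$: the projectives and projective-injectives of $\tau_{\le 0}\B^{(d+1)}_{\K,\P,\I}$ are $\P$ and $\I$ (above), and its $(d+1)$-st syzygy category is $\K$ — iterate the triangles of (5) to realise each object of $\K$ as such a syzygy, and compute via (5) and (4) that a $(d+1)$-st syzygy of an object of $\A$ lies back in $\K$. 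One records that both assignments are insensitive to the $\tau_{\le 0}$-model and to idempotent completion, hence descend to equivalence classes, and that the module-category case recovers Theorem~\ref{intro:classical}. I expect the real obstacle to be the $\dom\A\ge d+1$ estimate in $(2)\to(1)$: showing the left $\I$-approximations of (2) assemble, for exactly $d+1$ steps, into an injective coresolution by projective-injectives (admissibility of the monomorphisms, stability of the cokernels under (2)) and dually pinning down its length; this is where the length-$(d+1)$ $\ast$-filtration, the perpendicularity $\ker\Ext^{\ge 1}(-,\I)$ and the vanishing (4) must all be used together, the remaining arguments on dg enhancements and exact structures following \cite{Chen23a} with routine changes.
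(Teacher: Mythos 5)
Your architecture coincides with the paper's (construct both maps, verify the five axioms of Definition~\ref{def:d-precluster tilting triple} via syzygy/cosyzygy triangles and the Horseshoe Lemma, then check the composites), but two of your key steps would fail as written. For axiom (3) you derive extension-closure of $\K$ from an identification of $\K$ with $\ker\Ext^{\geq 1}(-,\P)$ inside $\D^b_{dg}(\A)$. That identification is false: for a projective-injective $I$ the object $\Sigma^{-1}I$ satisfies $\Ext^{\geq 1}(\Sigma^{-1}I,\P)\cong\Hom(I,\Sigma^{\geq 2}\P)=0$ yet is not a $(d+1)$-st syzygy, so extension-closure is not ``manifest'' from a perpendicularity description. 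The paper instead proves, by downward induction along the chosen projective resolution and using $\dom\geq d+1$ together with $\Id\leq d+1$, that every $K\in\K$ admits an $\I$-coresolution of length $d+1$ with cosyzygies in $\A$, and only then deduces extension-closure of $H^0(\K)$ from the Horseshoe Lemma; this is the missing argument. (Relatedly, your justification ``$\A$ is connective because $\K$ is'' is wrong --- the paper explicitly notes $\K$ is usually not connective; connectivity of $\tau_{\leq 0}\B^{(d+1)}_{\K,\P,\I}$ holds by construction of the truncation. And condition (2) does not literally say $(\K,\I)$ is a $d$-cluster tilting pair, since that notion requires the ambient dg category to be connective.)

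The second gap is in the composites. Your claims are too strong: an object $X$ of $\B^{(d+1)}_{\K_{\A},\P_{\A},\I_{\A}}$ is in general only a \emph{direct summand} of an object of $\A$ (the paper produces $X\oplus I_1\cong V_1\in H^0(\A)$), not quasi-isomorphic to one; and in the other composite one only obtains $K'\oplus P\cong K\oplus Q$ with $P,Q\in\P$. This is precisely why the equivalence relations of Definition~\ref{def:equ} are formulated via the closures under kernels of retractions $\overline{\A}$, $\L_{\K,\P,\I}$, $\Q_{\K,\P,\I}$, $\J_{\K,\P,\I}$; ``insensitive to idempotent completion'' is the statement that has to be proved, not a remark appended at the end. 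Moreover, to compare ``the $(d+1)$-st syzygy category of $\B'=\tau_{\leq 0}\B^{(d+1)}_{\K,\P,\I}$'' with $\K$ one must first identify the ambient categories in which the syzygies are computed: the paper's Lemma~\ref{lem:pretri} establishing $\pretr(\K)\simeq\D^b_{dg}(\B')$ (by comparing $\Ext$-groups using enough projectives and axiom (4)) is an indispensable ingredient that your outline omits entirely, and without it Proposition~\ref{prop:K} --- the actual content of $(2)\to(1)\to(2)$ --- cannot even be stated.
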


We refer to Definition~\ref{def:equ} for the definition of the equivalence relations mentioned in Theorem~\ref{intro:Iyama--Solberg correspondence}. Indeed, the framework of exact dg categories allows us to enhance the correspondence to an equivalence of $\infty$-groupoids, cf.~Theorem~\ref{thm:infinitygroupoids}.

We give a necessary and sufficient condition for the exact dg category $\tau_{\leq 0}\B_{\K,\P,\I}^{(d+1)}$ to be concentrated in degree zero (resp.~to be abelian), cf.~Proposition~\ref{prop:Quillenexact} (resp.~Proposition~\ref{prop:Abelian}). We show in Subsection~\ref{subsec:IS} that  the dg category $\A$ in Theorem~\ref{intro:Iyama--Solberg correspondence}~(1) is the module category over an Artin algebra $\Gamma$ (so in particular $\A$ is concentrated in degree zero) if and only if the corresponding triple $(\K,\P,\I)$  satisfies $\P=\add M\subset \mod\Lambda$ for a $d$-precluster tilting module $M$ over some Artin algebra $\Lambda$, and $\I$ is the full subcategory of injective $\Lambda$-modules and $\K$ is the full dg subcategory of $\D^b_{dg}(\mod\P)$ consisting of all $(d+1)$-st syzygies of $\P$-modules. In particular, our Theorem~\ref{intro:Iyama--Solberg correspondence} generalizes Theorem~\ref{intro:classical}. 
In Subsection~\ref{subsec:Grevstad}, we also compare our results with the correspondence obtained by Grevstad in~\cite{Grevstad22}.
\subsection*{Conventions}
In this article, we fix a commutative ring $k$. 
A dg $k$-category $\A$ is {\em additive} if $H^0(\A)$ is an additive category.
It is {\em connective} (resp.~{\em strictly connective}) if $H^i\A(A,B)=0$ (resp.~$\A^i(A,B)=0$) for $i>0$ and objects $A$ and $B$ in $\A$.
Additive subcategories of additive categories are assumed to be closed under direct summands.
For an additive $k$-category $\C$, we denote by $\Mod \C$ the category of $k$-linear functors $\C^{op}\rightarrow k\mbox{-}\Mod$. 
For each object $C\in\C$, we denote by $C^{\wedge}$ the representable right $\C$-module $\Hom_{\C}(?,C)\in \Mod\C$.
We denote by $\mod \C$ the category of finitely presented right $\C$-modules.
Dually, we denote by $\C\mbox{-}\Mod$ (resp.~$\C\mbox{-}\mod$) the category of $k$-linear functors $\C\rightarrow k\mbox{-}\Mod$ (resp.~the category of finitely presented left $\C$-modules).  
 For a subcategory $\D$ of $\C$, a morphism $f:X\rightarrow Y$ in $\C$ is a {\em left $\D$-approximation} of $X\in \C$ if $Y\in \D$ and the induced map $\Hom_{\C}(Y,Z)\rightarrow \Hom_{\C}(X,Z)$ is surjective for each $Z\in \D$. 
The full subcategory $\D$ is {\em cavariantly finite} if any $X\in \C$ has a left $\D$-approximation.
If $\X$ is a class of objects in $\C$, we write ${^{\perp}\X}\coloneqq\{C\in\C\mid \Hom_{\C}(C,X)=0\text{ for any $X\in\X$}\}$.
For a dg $k$-category $\P$, we denote by $\pretr(\P)$ its {\em pretriangulated hull} \cite{BondalKapranov90, Drinfeld04, BondalLarsenLunts04} and by $\tr(\P)=H^0(\pretr(\P))$ its {\em triangulated hull}.
A full additive dg subcategory $\I\subseteq \P$ is {\em covariantly finite} if $H^0(\I)$ is a covariantly finite subcategory of $H^0(\P)$.
For an exact dg category $\A$, we denote by $\D^b_{dg}(\A)$ its {\em (bounded) dg derived category} and by $\D^b(\A)=H^0(\D^b_{dg}(\A))$ its {\em bounded derived category} \cite{Chen23}.
We denote by $\dgcat$ the category of small dg $k$-categories and by $\Hqe$ the localization of $\dgcat$ with respect to the quasi-equivalences \cite{Tabuada05}.

\subsection*{Acknowledgements}
The author thanks his Ph.~D.~supervisor Bernhard Keller and his Ph.~D.~co-supervisor Xiao-Wu Chen for their constant support and their encouragements. 
The author thanks Yann Palu, Mikhail Gorsky and Norihiro Hanihara for helpful discussions.


\section{Iyama--Solberg correspondence}
Throughout this section, let $d\geq 0$ be an integer. 
\begin{definition}[{\cite[Definition 3.5]{GorskyNakaokaPalu23}}]
Let $(\C,\mathbb E,\mathfrak s)$ be an extriangulated category with enough projectives. We define its {\em dominant dimension} $\mathrm{dom.dim}(\C,\mathbb E,\mathfrak s)$  to be the largest integer $n$ such that for any projective object $P$, there exist $n$ $\mathfrak s$-triangles
\[
\begin{tikzcd}[row sep=small]
P=Y^0\ar[r,"f^0",tail]&I^0\ar[r,two heads]&Y^1\ar[r,dashed]&;\\
Y^1\ar[r,"f^1",tail]&I^1\ar[r,two heads]&Y^2\ar[r,dashed]&;\\
&\ldots&&\\
Y^{n-1}\ar[r,"f^{n-1}",tail]&I^{n-1}\ar[r,two heads]&Y^{n}\ar[r,dashed]&,
\end{tikzcd}
\]
with $I^k$ being projective-injective for $0\leq k\leq n-1$. 
If such an $n$ does not exist, we let $\mathrm{dom.dim}(\C, \mathbb E,\mathfrak s)=\infty$.
\end{definition}
For an object $X$ in an extriangulated category $(\C,\mathbb E,\mathfrak s)$ and $\I\subset \C$ a full additive subcategory consisting of injective objects, we say that $X$ admits {\em an $\I$-coresolution of length $l$} if there exist $\mathfrak s$-triangles
\begin{equation*}
\begin{tikzcd}
X\ar[r]& I^0\ar[r]& X^1\ar[r,dashed]&\;;
\end{tikzcd}
\end{equation*}
\begin{equation*}
\begin{tikzcd}
X^1\ar[r]& I^1\ar[r]& X^2\ar[r,dashed]&\;;
\end{tikzcd}
\end{equation*}
\begin{equation*}
\begin{tikzcd}
\ldots
\end{tikzcd}
\end{equation*}
\begin{equation*}
\begin{tikzcd}
X^{l-1}\ar[r]& I^{l-1}\ar[r]& X^{l}\ar[r,dashed]&\;,
\end{tikzcd}
\end{equation*}
where $I^{i}\in\I$ for $0\leq i\leq l-1$. 
We call $X^{i}$ the {\em $i$-th cosyzygy} of $X$ (with respect to $\I$).

\begin{definition}
An extriangulated category $(\C,\mathbb E, \mathfrak s)$ is {\em $d$-minimal Auslander--Gorenstein} if
\begin{itemize}
\item[(a)] $\C$ has enough projectives;
\item[(b)] each projective has injective codimension $\leq d+1$, i.e.~$\mathbb E^{d+2}(-,P)=0$ for each projective object $P$ in $\C$;
\item[(c)] it has dominant dimension $\geq d+1$.
\end{itemize}
An exact dg category $\A$ is {\em $d$-minimal Auslander--Gorenstein} if $H^0(\A)$ is $d$-minimal Auslander--Gorenstein. 
\end{definition}
\begin{remark}
A Frobenius extriangulated category is $d$-minimal Auslander--Gorenstein for each $d\geq 0$.
\end{remark}


For a pretriangulated dg category $\T$ and full dg subcategories $\X$ and $\Y$,
by abuse of notation, we write $\X\ast\Y$ as the full dg subcategory of $\T$ consisting of the objects $Z$ which admit triangles in the triangulated category $H^0(\T)$
\[
X\rightarrow Z\rightarrow Y\rightarrow \Sigma X
\] 
with $X\in\X$ and $Y\in\Y$.
 For a triple $(\K,\P,\I)$, where $\K$ is an additive dg category and $\I\subset \P$ are connective additive dg subcategories of $\K$, and for $n\geq 0$, we denote by $\B^{(n)}_{\K, \P,\I}$ 
 the full dg subcategory of $\pretr(\K)$ consisting of the objects in 
\[
\K\ast\Sigma\K\ast\ldots\ast\Sigma^{n-1}\K\ast \Sigma^{n}\mathcal K\cap \ker\Ext^{\geq 1}(-,\I).
\]
A priori, the dg subcategory $\B_{\K,\P,\I}^{(n)}$ is not stable under extensions in $\pretr(\K)$. 
Below we will show that  this is indeed the case when the triple $(\K,\P,\I)$ is $d$-precluster tilting and $n\leq d+1$.
\begin{definition}\label{def:d-precluster tilting triple}
Let $\K$ be an additive dg category and 
$\I\subseteq \P\subseteq \K$ full additive dg subcategories.
The triple $(\K,\P,\I)$ is {\em $d$-precluster tilting} if it has the following properties 
\begin{itemize}
\item[(1)] $\P$ is connective;
\item[(2)] $H^0(\I)$ is covariantly finite in $H^0(\B_{\K,\P,\I}^{(d)})$;
\item[(3)] $\K$ is extension-closed in $\tr(\K)$;
\item[(4)] $\Hom_{\tr(\K)}(\P,\Sigma^{\geq i}\K )=0$  and $\Hom_{\tr(\K)}(\K,\Sigma^{\geq i}\P )=0$ for any $i\geq 1$;
\item[(5)] for each object  $K$ in $\K$, there is a triangle in $\tr(\K)$ 
\[
K'\rightarrow P\rightarrow K\rightarrow \Sigma K'
\]
where $P\in \P$ and $K'\in\K$.
\end{itemize}
\end{definition}
Note that the dg category $\K$ is usually not connective and 
we could rewrite property $(5)$ in Definition~\ref{def:d-precluster tilting triple} as follows: $\K\subset \P\ast\Sigma\K$ in $\pretr(\K)$.
We denote by $\L_{\K, \P,\I}$ (resp.~$\Q_{\K,\P,\I}$, $\J_{\K,\P,\I}$) the full dg subcategory of $\B_{\K,\P,\I}^{(d+1)}$ on the objects in the closure under kernels of retractions of the objects in $H^0(\K)$ (resp.~in $H^0(\P)$, $H^0(\I)$).
 The dg category $\B_{\K,\P,\I}^{(d+1)}$  is a $d$-minimal Auslander--Gorenstein exact dg category as the following proposition shows.

\begin{proposition}\label{prop:preclustertilting}
Let $(\K,\P,\I)$ be a $d$-precluster tilting triple of dg categories.
Then $\B_{\K,\P,\I}^{(d+1)}$ is extension-closed in $\pretr(\K)$. 
We endow it with the inherited exact dg structure.
The following  holds for $\B_{\K,\P,\I}^{(d+1)}$.
\begin{itemize}
\item[(a)] $\Q_{\K,\P,\I}$ is the full dg subcategory of projectives and $\B_{\K,\P,\I}^{(d+1)}$ has enough projectives;
\item[(b)] each projective has injective codimension $\leq d+1$;
\item[(c)] $\J_{\K,\P,\I}$ is the full dg subcategory of projective-injectives and $\B_{\K,\P,\I}^{(d+1)}$ has dominant dimension $\geq d+1$.
\end{itemize}
\end{proposition}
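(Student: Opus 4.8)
\textbf{Proof proposal for Proposition~\ref{prop:preclustertilting}.}

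The plan is to first establish the extension-closedness of $\B^{(d+1)}_{\K,\P,\I}$ in $\pretr(\K)$, and then verify the three homological properties (a), (b), (c) in turn, building each on the structural description of objects of $\B^{(d+1)}_{\K,\P,\I}$ supplied by properties (3)--(5) of a $d$-precluster tilting triple. For extension-closedness: an object lies in $\B^{(n)}_{\K,\P,\I}$ iff it lies in $\K\ast\Sigma\K\ast\cdots\ast\Sigma^n\K$ and is right orthogonal to $\Sigma^{\geq 1}\I$. The filtration condition $\K\ast\Sigma\K\ast\cdots\ast\Sigma^n\K$ is automatically closed under extensions using the octahedral axiom together with property (3) (which gives $\K\ast\Sigma^i\K\subseteq\Sigma^i\K\ast(\text{stuff})$ type shuffling — here I would make precise that iterated use of (3) lets one absorb a $\K$ on the left past $\Sigma^j\K$'s, keeping the total length $\leq d+1$; this is the key lemma and I expect it to require an induction on $n$). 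The orthogonality condition $\ker\Ext^{\geq 1}(-,\I)$ is closed under extensions by the long exact sequence of $\Hom_{\tr(\K)}(-,\Sigma^{\geq 1}\I)$ applied to a triangle with both outer terms in the kernel. Combining the two gives extension-closedness, and we then equip $\B^{(d+1)}_{\K,\P,\I}$ with the exact dg structure inherited from $\pretr(\K)$ (as in \cite{Chen23}).

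For (a): I would show $\Q_{\K,\P,\I}$ consists of projective objects of the exact dg category. An object $Q$ in the closure under kernels of retractions of $H^0(\P)$ is projective because $\Ext^1(Q,-)$ computed in $\B^{(d+1)}_{\K,\P,\I}$ vanishes: conflations in $\B^{(d+1)}_{\K,\P,\I}$ are triangles of $\pretr(\K)$ with all three terms in $\B^{(d+1)}_{\K,\P,\I}$, and $\Hom_{\tr(\K)}(\P,\Sigma^{\geq 1}\K)=0$ by property (4) forces $\Hom_{\tr(\K)}(\P,\Sigma^{\geq 1}B)=0$ for every $B\in\B^{(d+1)}_{\K,\P,\I}$ by dévissage along the $\Sigma^i\K$-filtration (and $\P$ connective). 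For ``enough projectives'': given $B\in\B^{(d+1)}_{\K,\P,\I}$, I would use property (5) repeatedly to resolve the $\K$-pieces of its filtration by objects of $\P$, producing a conflation $B'\to Q\to B$ with $Q\in\Q_{\K,\P,\I}$ and $B'$ again in $\B^{(d+1)}_{\K,\P,\I}$ (one needs to check that replacing a $\K$ by a $\P$ and an extra $\Sigma\K$ keeps us within length $d+1$, i.e.\ a length drops or is re-used; the bookkeeping that $\B^{(d+1)}$ and not $\B^{(d)}$ appears is exactly what makes this work). Conversely every projective is a summand of such a $Q$, so it lies in $\Q_{\K,\P,\I}$.

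For (b) and (c): property (b) says each projective $Q\in\Q_{\K,\P,\I}$ satisfies $\Ext^{d+2}(-,Q)=0$; this follows because any object of $\B^{(d+1)}_{\K,\P,\I}$ has an $\I$-coresolution (equivalently a $\Q_{\K,\P,\I}$- or $\J_{\K,\P,\I}$-coresolution) of length $\leq d+1$, obtained by dualizing the projective resolution argument of (a) and using covariant finiteness of $\I$ in $H^0(\B^{(d)}_{\K,\P,\I})$ (property (2)) to start the coresolution; hence $\Ext^{\geq d+2}$ into anything, in particular into $Q$, vanishes. For (c), I would show $\J_{\K,\P,\I}$ consists of projective-injective objects — injectivity via the orthogonality $\Ext^{\geq 1}(-,\I)=0$ built into $\B^{(d+1)}_{\K,\P,\I}$, projectivity since $\I\subseteq\P$ — and that the dominant dimension is $\geq d+1$ by exhibiting, for each projective $Q\in\Q_{\K,\P,\I}$, an $\I$-coresolution whose first $d+1$ terms lie in $\J_{\K,\P,\I}$; again this is the dual of the resolution in (a), where property (2) ensures the relevant left $\I$-approximations exist at each of the first $d+1$ steps and the filtration length controls that we do not run out before step $d+1$. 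The main obstacle throughout is the combinatorial control of the $\Sigma^i\K$-filtration lengths under the octahedron — showing that extensions, syzygies and cosyzygies all stay inside $\B^{(d+1)}_{\K,\P,\I}$ — so I would isolate that as a preliminary lemma about the subcategories $\K\ast\Sigma\K\ast\cdots\ast\Sigma^n\K$ before proving the proposition proper.
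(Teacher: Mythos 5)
The central step — extension-closedness of $\B^{(d+1)}_{\K,\P,\I}$ — is where your proposal has a genuine gap. You propose to show that $\K\ast\Sigma\K\ast\cdots\ast\Sigma^{d+1}\K$ is closed under extensions by "shuffling" factors using property (3). But property (3) only gives $\K\ast\K\subseteq\K$; it provides no commutation of the form $\Sigma^j\K\ast\Sigma^i\K\subseteq\Sigma^i\K\ast\Sigma^j\K$ for $j>i$. Such a commutation would require $\Hom_{\tr(\K)}(\K,\Sigma^{j+1-i}\K)=0$, and since $\K$ is \emph{not} connective (the paper stresses this right after Definition~\ref{def:d-precluster tilting triple}) there is no reason for these groups to vanish. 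The idea you are missing is to use property (5) \emph{first}: iterating $\K\subseteq\P\ast\Sigma\K$ gives
$\K\ast\Sigma\K\ast\cdots\ast\Sigma^{d}\K\ast\Sigma^{d+1}\K\subseteq\P\ast\Sigma\P\ast\cdots\ast\Sigma^{d}\P\ast\Sigma^{d+1}\K$,
and on \emph{this} presentation the shuffling is legitimate: $\Sigma^{d+1}\K\ast\Sigma^{i}\P\subseteq\Sigma^{i}\P\ast\Sigma^{d+1}\K$ for $i\leq d+1$ because $\Hom_{\tr(\K)}(\P,\Sigma^{\geq 1}\K)=0$ by (4), the $\Sigma^i\P$-factors reorder among themselves because $\P$ is connective, and (3) is needed only to absorb $\Sigma^{d+1}\K\ast\Sigma^{d+1}\K$ into $\Sigma^{d+1}\K$. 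So the statement you want is true for $n\leq d+1$, but your proposed mechanism does not prove it; properties (4) and (5) are essential, not just (3).

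A second, smaller problem is item (b). You argue that every object of $\B^{(d+1)}_{\K,\P,\I}$ admits an $\I$-coresolution of length $\leq d+1$ and conclude that $\Ext^{\geq d+2}(-,Q)=0$; but a coresolution of $X$ controls $\mathbb{E}^{k}(-,X)$, not $\mathbb{E}^{k}(X,Q)$, so the variable is wrong (one would instead need the $(d+1)$-st cosyzygy of $Q$ to be injective, which is not automatic). The paper's argument for (b) is a one-line d\'evissage along the rewritten filtration: for $X\in\P\ast\Sigma\P\ast\cdots\ast\Sigma^{d}\P\ast\Sigma^{d+1}\K$ one has $\mathbb{E}^{d+2}(X,P)=\Hom_{\tr(\K)}(X,\Sigma^{d+2}P)$, which vanishes on each graded piece since $\Hom(\Sigma^{i}\P,\Sigma^{d+2}P)=\Hom(\P,\Sigma^{d+2-i}P)=0$ for $i\leq d$ by connectivity and $\Hom(\Sigma^{d+1}\K,\Sigma^{d+2}P)=\Hom(\K,\Sigma P)=0$ by (4). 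Your treatments of (a) (projectivity of $\Q_{\K,\P,\I}$ by d\'evissage, enough projectives via (5)) and of (c) (left $\I$-approximations from (2) plus the Horseshoe Lemma) do match the paper's proof.
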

\begin{proof}
Indeed, it is easy to verify that  $\P\ast\Sigma\P\ast\ldots\ast\Sigma^d\P\ast \Sigma^{d+1}\mathcal K$ is extension-closed in $\pretr(\K)$ by (3) and (4) in Definition~\ref{def:d-precluster tilting triple}: we have
\[
\Sigma^{d+1}\K\ast\Sigma^{i}\P\subseteq \Sigma^i\P\ast\Sigma^{d+1}\K,\;\; i\leq d+1,
\]
 and that it is equal to $\B_{\K,\P,\I}^{(d+1)}$: 
\begin{align*}&\K\ast\Sigma\K\ast\ldots\ast\Sigma^d\K\ast \Sigma^{d+1}\mathcal K\\
\subset &(\P\ast \Sigma\K)\ast \Sigma\K\ast\Sigma^2\K\ast\ldots\ast\Sigma^d\K\ast \Sigma^{d+1}\mathcal K\\
\subset &\P\ast \Sigma\K\ast\Sigma^2\K\ast\ldots\ast\Sigma^d\K\ast \Sigma^{d+1}\mathcal K\\
\subset &\ldots\\
\subset &\P\ast\Sigma\P\ast\ldots\ast\Sigma^d\P\ast \Sigma^{d+1}\mathcal K.
\end{align*} 
Then it is clear that  $\Q_{\K,\P,\I}$ is the full dg subcategory of projectives and $\B_{\K,\P,\I}^{(d+1)}$ has enough projectives.
It is also clear that $\J_{\K,\P,\I}$ is the full dg subcategory of projective-injectives.


Item (b) follows from property (4) and the definition of $H^0(\B_{\K,\P,\I}^{(d+1)})$.

Item (c) follows from property (2). Indeed, let $Q$ be an object in $\Q_{\K,\P,\I}$. Then we have objects $P$ and $S$ in $\P$ such that $P\iso Q\oplus S$. We have the following diagram in $\tr(\K)$
\[
\begin{tikzcd}
Q\ar[r,equal]\ar[d,tail]&Q\ar[d,tail]&\\
P\ar[r,tail,"f"]\ar[d,two heads]&I\ar[r,two heads]\ar[d,two heads]&U\ar[d,equal]\\
S\ar[r]&V\ar[r,two heads]&U
\end{tikzcd}
\]
where $f$ is a left $H^0(\I)$-approximation. 
Now the claim follows from the Horseshoe Lemma applied to the conflation in the bottom row.
\end{proof}

For a connective exact dg category $\A$, we denote by $\overline{\A}$ 
the full dg subcategory of $\D^{b}_{dg}(\A)$ consisting of the objects 
in the closure under kernels of retractions of $H^0(\A)$ in $\D^{b}(\A)$. 
\begin{definition}[{\cite[Definition 3.6]{Chen23a}}]\label{def:equ}
\begin{enumerate}
\item[1)] Two connective exact dg categories $\A$ and $\B$ are {\em equivalent} if there is a quasi-equivalence 
$\overline{\A}\iso \overline{\B}$.
\item[2)] Two $d$-precluster tilting triples $(\K, \P,\I)$ and $(\K', \P',\I')$ are {\em equivalent} if there is a quasi-equivalence 
 $\L_{\K,\P,\I}\iso \L_{\K',\P',\I'}$ which restricts to quasi-equivalences $ \Q_{\K,\P,\I}\iso \Q_{\K',\P',\I'}$ and $\J_{\K,\P,\I}\iso\J_{\K',\P',\I'}$.
 \end{enumerate}
\end{definition}

\begin{theorem} \label{thm:Iyama--Solberg correspondence}
There is a canonical bijection between the following
\begin{itemize}
\item[(1)] equivalence classes of connective exact dg categories $\A$ which are $d$-minimal Auslander--Gorenstein;
\item[(2)] equivalence classes of $d$-precluster tilting triples $(\K,\P,\I)$.
\end{itemize}
The bijection from $(1)$ to $(2)$ sends $\A$ to the triple $(\K,\P,\I)$ formed by  the full dg subcategory $\P$ on the projectives of $\A$, its full dg subcategory $\I$ of projective-injectives and the full dg subcategory $\K$ of $\D^b_{dg}(\A)$ consisting of the objects $K$  which are $(d+1)$-st syzygies of objects in $\A$ (in particular we have $\Ext^{\geq 1}(K,\P)=0$). The inverse bijection sends $(\K,\P,\I)$ to the $\tau_{\leq 0}$-truncation of the  dg subcategory $\B^{(d+1)}_{\K,\P,\I}$
 of $\pretr(\K)$.
\end{theorem}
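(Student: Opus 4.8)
The plan is to establish the bijection by producing explicit, mutually inverse constructions and checking that each respects the equivalence relations of Definition~\ref{def:equ}. In one direction, given a connective $d$-minimal Auslander--Gorenstein exact dg category $\A$, I would define $\P$ to be the full dg subcategory of projectives of $\A$, $\I$ the full dg subcategory of projective-injectives, and $\K\subseteq \D^b_{dg}(\A)$ the full dg subcategory on the $(d+1)$-st syzygies of objects of $\A$. The first task is to verify that $(\K,\P,\I)$ satisfies (1)--(5) of Definition~\ref{def:d-precluster tilting triple}. Connectivity of $\P$ is inherited from $\A$; property~(4) comes from the fact that projectives have no positive self-extensions and, dually for the second vanishing, from the truncation properties of $\D^b_{dg}(\A)$ together with connectivity; property~(5) is the statement that every object of $\A$ has a projective cover-style conflation, iterated $d+1$ times, which places its $(d+1)$-st syzygy in $\P\ast\Sigma\K$; property~(3), that $\K$ is extension-closed in $\tr(\K)=\thick(\K)$, should follow from the horseshoe-type closure of syzygy classes and the hypothesis that $\A$ has enough projectives. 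The subtle point is property~(2): $H^0(\I)$ covariantly finite in $H^0(\B^{(d)}_{\K,\P,\I})$. Here I would translate the dominant-dimension hypothesis $\mathrm{dom.dim}\geq d+1$ into the existence of $\I$-approximations for the relevant cosyzygy objects, using that $\B^{(d)}_{\K,\P,\I}$ is, up to the syzygy shift, a category of $(d+1)$-st cosyzygies of projectives and their extensions.

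In the other direction, given a $d$-precluster tilting triple $(\K,\P,\I)$, Proposition~\ref{prop:preclustertilting} already tells us that $\B^{(d+1)}_{\K,\P,\I}$ is an extension-closed subcategory of $\pretr(\K)$ with an inherited exact dg structure, that it has enough projectives ($\Q_{\K,\P,\I}$), that its projectives have injective codimension $\leq d+1$, and that it has dominant dimension $\geq d+1$ with projective-injectives $\J_{\K,\P,\I}$; so $\B^{(d+1)}_{\K,\P,\I}$ is $d$-minimal Auslander--Gorenstein as an extriangulated category after passing to $H^0$. Since $\B^{(d+1)}_{\K,\P,\I}$ need not be connective, I replace it by its $\tau_{\leq 0}$-truncation $\tau_{\leq 0}\B^{(d+1)}_{\K,\P,\I}$; I must check that this truncation is still an exact dg category (the $\tau_{\leq 0}$-truncation of an exact dg category with objects concentrated in nonnegative extension degrees is again exact, by the general machinery of \cite{Chen23}) and that taking $H^0$ is unchanged, so the $d$-minimal Auslander--Gorenstein property on $H^0$ persists. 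Its connectivity is then immediate.

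The heart of the argument is showing these two assignments are mutually inverse \emph{up to the equivalences of Definition~\ref{def:equ}}. Starting from $\A$, passing to $(\K,\P,\I)$, and then forming $\tau_{\leq 0}\B^{(d+1)}_{\K,\P,\I}$, I want a quasi-equivalence with $\A$ at the level of the completions $\overline{(-)}$. The key computation is that the full dg subcategory $\B^{(d+1)}_{\K,\P,\I}$ of $\pretr(\K)\subseteq \pretr(\D^b_{dg}(\A))=\D^b_{dg}(\A)$ is, inside $\D^b_{dg}(\A)$, exactly the full subcategory on objects lying in $\A\ast\Sigma\A\ast\cdots\ast\Sigma^{d+1}\A$ that have no positive extensions into the projectives --- i.e., it is (a shift-twisted form of) $\A$ itself, because every object of $\A$ is obtained from its $(d+1)$-st syzygy by $d+1$ conflations with projective middle terms. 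Conversely, starting from $(\K,\P,\I)$, forming $\A\coloneqq \tau_{\leq 0}\B^{(d+1)}_{\K,\P,\I}$, and then extracting its projectives, projective-injectives, and $(d+1)$-st syzygies, I need: $\P$ is recovered as the projectives (this is Proposition~\ref{prop:preclustertilting}(a)); $\I$ as the projective-injectives (Proposition~\ref{prop:preclustertilting}(c)); and $\K$ as the $(d+1)$-st syzygies in $\D^b_{dg}(\A)$ --- this last point uses that the $(d+1)$-st syzygy of an object $\K\ast\Sigma\K\ast\cdots\ast\Sigma^{d+1}\K$, computed via the projective resolutions coming from property~(5), returns an object of $\Sigma^{d+1}\K$, hence of $\K$ after the shift, and that the derived category $\D^b_{dg}(\B^{(d+1)}_{\K,\P,\I})$ can be identified with an appropriate thick subcategory of $\pretr(\K)$ via the exact dg structure.

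I expect the main obstacle to be the last identification: controlling $\D^b_{dg}(\tau_{\leq 0}\B^{(d+1)}_{\K,\P,\I})$ precisely enough to see that its $(d+1)$-st syzygy category is $\K$ on the nose (up to the completion by kernels of retractions). This requires knowing that the derived category of the exact dg structure on $\B^{(d+1)}_{\K,\P,\I}$ embeds into $\pretr(\K)$ with image the thick subcategory generated by $\K$, which in turn rests on property~(4) (the $\Hom$-vanishing that makes the filtration by $\Sigma^i\P$ and $\Sigma^{d+1}\K$ behave like a semiorthogonal-type decomposition) and on the fact that conflations in $\B^{(d+1)}_{\K,\P,\I}$ are exactly the triangles of $\pretr(\K)$ with all three terms in $\B^{(d+1)}_{\K,\P,\I}$. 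Once that embedding is in hand, everything else is a bookkeeping of shifts and a check that the constructed quasi-equivalences restrict correctly to $\Q$ and $\J$, which is what Definition~\ref{def:equ}(2) demands; that bookkeeping I would carry out but not belabor here.
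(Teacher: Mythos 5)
Your proposal follows essentially the same route as the paper: well-definedness of both assignments (the content of Lemma~\ref{lem:d-minimal AG exact dg category} and Proposition~\ref{prop:preclustertilting}), the identification $\pretr(\K)\simeq\D^b_{dg}(\tau_{\leq 0}\B^{(d+1)}_{\K,\P,\I})$ as in Lemma~\ref{lem:pretri}, the recovery of $\K$ as the closure of the $(d+1)$-st syzygy category under kernels of retractions as in Proposition~\ref{prop:K}, and the identification of $\tau_{\leq 0}\B^{(d+1)}_{\K_{\A},\P_{\A},\I_{\A}}$ with $\overline{\A}$. One minor slip in your description of the key computation: the defining condition on $\B^{(d+1)}_{\K,\P,\I}$ is the vanishing of $\Ext^{\geq 1}(-,\I)$ into the projective-injectives, not into all projectives (indeed objects of $\A$ need not satisfy the latter), but this does not affect the strategy.
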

We will prove Theorem~\ref{thm:Iyama--Solberg correspondence} after Proposition~\ref{prop:K}.
\begin{lemma}\label{lem:d-minimal AG exact dg category}
Let $\A$ be a connective $d$-minimal Auslander--Gorenstein exact dg category. 
Let  $\P=\P_{\A}$ (resp.~$\I=\I_{\A}$) be the full dg subcategory on the projectives (resp.~projective-injectives) of $\A$.
Let $\K=\K_{\A}$ be the full dg subcategory of $\D^b_{dg}(\A)$ consisting of the objects $K$, which are $(d+1)$-st syzygies of objects in $\A$.
Then the following statements hold:
\begin{itemize}
\item[(a)] The universal embedding $\A\rightarrow\D^b_{dg}(\A)$ induces a quasi-fully faithful morphism $\P\hookrightarrow \K$.
We view $\P$ as a full dg subcategory of $\K$ via this morphism.
\item[(b)] Each object $K$ in $\K$ admits triangles in $\D^b(\A)$
\begin{equation*}
\begin{tikzcd}
K\ar[r]& I^0\ar[r]& L^1\ar[r]&\Sigma K;
\end{tikzcd}
\end{equation*}
\begin{equation*}
\begin{tikzcd}
L^1\ar[r]& I^1\ar[r]& L^2\ar[r]&\Sigma L^1;
\end{tikzcd}
\end{equation*}
\begin{equation*}
\begin{tikzcd}
\ldots
\end{tikzcd}
\end{equation*}
\begin{equation*}
\begin{tikzcd}
L^d\ar[r]& I^d\ar[r]& L^{d+1}\ar[r]&\Sigma L^d,
\end{tikzcd}
\end{equation*}
where $I^{i}\in\I$ and $L^{i+1}\in \A$  for $0\leq i\leq d$.
In particular, by the Horseshoe Lemma, the category $H^0(\K)$ is extension-closed in $H^0(\D^b_{dg}(\A))$.
\item[(c)] We have $\K\subseteq \P\ast\Sigma\K$ in $\pretr(\K)$ and $\Ext^{\geq 1}(\K,\P)=0$ and $\Ext^{\geq 1}(\P,\K)=0$.
\item[(d)] $H^0(\I)$ is covariantly finite in $H^0(\B_{\K,\P,\I}^{(d)})$.
\item[(e)] $\A$ is equivalent to $\tau_{\leq 0}\B_{\K,\P,\I}^{(d+1)}$ in the sense of Definition~\ref{def:equ} 1), and $\pretr(\K)$ is quasi-equivalent to $\D^b_{dg}(\A)$ .
\end{itemize}
In particular, the triple $(\K,\P,\I)$ is $d$-precluster tilting.
\end{lemma}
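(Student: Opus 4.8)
The plan is to verify (a)--(e) in turn and then read off conditions (1)--(5) of Definition~\ref{def:d-precluster tilting triple}. Throughout I will use that the universal dg functor $\A\to\D^b_{dg}(\A)$ is quasi-fully faithful and sends conflations of $\A$ to triangles of $\D^b(\A)$, that $H^0(\A)$ has enough projectives, and that a syzygy (the kernel of a deflation onto an object of $H^0(\A)$ from a projective) again lies in $H^0(\A)$, so that iterated syzygies $\Omega^{j}X$ make sense in $\A$. For (a): any projective $P$ is a $(d+1)$-st syzygy of the zero object via the degenerate conflations $P\xrightarrow{\id}P\to 0$ and $0\to 0\to 0$; hence $\P$ is contained in $\K$ as a class of objects, and $\P\hookrightarrow\K$, being a restriction of the quasi-fully faithful embedding of $\A$, is quasi-fully faithful.

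The core of the argument is (b), which says that the dominant-dimension hypothesis --- a priori about projectives only --- propagates to all $(d+1)$-st syzygies. I would prove by induction on $m\geq 0$: if $H^0(\A)$ has dominant dimension $\geq m$, then every $m$-th syzygy of an object of $\A$ admits an $\I$-coresolution of length $m$ with all cosyzygies in $H^0(\A)$ (the case $m=0$ is vacuous). For the step, let $K=\Omega^{m+1}X$ with a conflation $K\rightarrowtail P^{m}\twoheadrightarrow\Omega^{m}X$, $P^{m}$ projective; dominant dimension $\geq m+1$ gives $P^{m}$ an $\I$-coresolution of length $m+1$, whose first conflation $P^{m}\rightarrowtail I^{0}\twoheadrightarrow C$ has $I^{0}\in\I$ and $C\in H^0(\A)$ with an $\I$-coresolution of length $m$ (cosyzygies in $H^0(\A)$). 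The composite $K\rightarrowtail P^{m}\rightarrowtail I^{0}$ is an inflation, and by the octahedral axiom its cokernel $L^{1}$ fits in a conflation $\Omega^{m}X\rightarrowtail L^{1}\twoheadrightarrow C$; since $\Omega^{m}X$ is an $m$-th syzygy the induction hypothesis applies to it, and the Horseshoe Lemma then gives $L^{1}$ an $\I$-coresolution of length $m$ with cosyzygies in $H^0(\A)$. Splicing $K\rightarrowtail I^{0}\twoheadrightarrow L^{1}$ on top yields the required coresolution of $K$; taking $m=d+1$ produces the triangles of (b). The extension-closedness of $H^0(\K)$ is the dual bookkeeping: for a conflation $K_{1}\rightarrowtail E\twoheadrightarrow K_{2}$ with $K_{i}=\Omega^{d+1}X_{i}$, the Horseshoe Lemma applied to the conflations $K_{i}\rightarrowtail P_{i}\twoheadrightarrow\Omega^{d}X_{i}$ gives $E\rightarrowtail P_{1}\oplus P_{2}\twoheadrightarrow E'$ with $E'$ an extension of $\Omega^{d}X_{1}$ and $\Omega^{d}X_{2}$; by the analogous induction (``an extension of two $m$-th syzygies is an $m$-th syzygy'', base case $m=0$) one has $E'=\Omega^{d}Z$, so $E$ is a $(d+1)$-st syzygy of $Z$, i.e.\ $E\in H^0(\K)$.

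For (c): $\K\subseteq\P\ast\Sigma\K$ because for $K=\Omega^{d+1}X\in\K\subseteq H^0(\A)$, a deflation $P\twoheadrightarrow K$ from a projective, with kernel $\Omega K=\Omega^{d+2}X\in\K$, gives the triangle $P\to K\to\Sigma\Omega K\to\Sigma P$. Dimension shifting along the syzygy conflations (the middle terms being projective) gives $\Ext^{i}(K,P)\cong\mathbb{E}^{i+d+1}(X,P)$ for $i\geq 1$, which vanishes because projectives have injective codimension $\leq d+1$; this is $\Ext^{\geq 1}(\K,\P)=0$, and $\Ext^{\geq 1}(\P,\K)=0$ is immediate from projectivity. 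For (d), i.e.\ that $H^0(\I)$ is covariantly finite in $H^0(\B^{(d)}_{\K,\P,\I})$ (condition (2)): rewriting $\B^{(d)}_{\K,\P,\I}$ as in the proof of Proposition~\ref{prop:preclustertilting} (using (c)), every object acquires a filtration with one $\P$-layer in degree $0$ and the remaining layers in strictly negative degrees, which do not map to $\I$; the inflation of the degree-$0$ $\P$-layer into a projective-injective provided by dominant dimension $\geq d+1$, extended along this filtration via the Horseshoe Lemma, is then a left $\I$-approximation.

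For (e): $\K$ generates $\D^b(\A)=\thick(H^0(\A))$ as a triangulated category, since each $X\in H^0(\A)$ is obtained by $d+1$ cones from $\Omega^{d+1}X\in\K$ and the projectives $P^{0},\dots,P^{d}\in\P\subseteq\K$ of a syzygy tower, so the canonical functor $\pretr(\K)\to\D^b_{dg}(\A)$ is a quasi-equivalence; the identification $\overline{\A}\iso\overline{\tau_{\leq 0}\B^{(d+1)}_{\K,\P,\I}}$ of Definition~\ref{def:equ}~1) follows from Proposition~\ref{prop:preclustertilting} together with the reconstruction technique of \cite{Chen23a}, using that the syzygy filtration places $H^0(\A)$ inside $H^0(\B^{(d+1)}_{\K,\P,\I})$ and that the morphism complexes of $\tau_{\leq 0}\B^{(d+1)}_{\K,\P,\I}$ are the truncations of those of $\D^b_{dg}(\A)$, agreeing with those of the connective category $\A$. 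With (a)--(e) in hand, $(\K,\P,\I)$ is $d$-precluster tilting: (1) because $\P$ is a full dg subcategory of the connective $\A$; (2) is (d); (3) is the extension-closedness in (b); (4) is the two vanishings in (c); (5) is $\K\subseteq\P\ast\Sigma\K$ from (c). I expect the main obstacle to be (b): transporting the dominant-dimension condition from projectives to arbitrary $(d+1)$-st syzygies while keeping the cosyzygies inside $H^0(\A)$ --- the nested induction with the octahedral/Horseshoe bookkeeping --- which is also what makes $H^0(\K)$ extension-closed and hence the exact dg structure on $\B^{(d+1)}_{\K,\P,\I}$ of Proposition~\ref{prop:preclustertilting} available; the reconstruction statement in (e) is the secondary delicate point.
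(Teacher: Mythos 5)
Your parts (a), (b) and (c) follow essentially the paper's own argument: your upward induction on the syzygy index in (b) is the paper's downward induction on the tower index, with the same octahedron--Horseshoe step (the paper's $V^i$, $U^i$ are your $L^1$, $C$). One small loose end there: your dual Horseshoe for the extension-closedness of $H^0(\K)$ runs along the non-injective objects $P_i$ rather than along the $\I$-coresolutions, so it silently needs $\Ext^1(K_2,P_1)=0$; this is true, but only by the dimension shift you carry out later in (c), whereas the paper's route through the projective-\emph{injectives} $I^i$ needs no such input.

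The genuine gap is in (d). You filter $N\in\B^{(d)}_{\K,\P,\I}$ by $\P\ast\Sigma\P\ast\cdots\ast\Sigma^{d-1}\P\ast\Sigma^{d}\K$ and assert that the layers $\Sigma^{i}\P$, $i\geq 1$, ``do not map to $\I$''. In the dg setting this is false: $\Hom_{\D^b(\A)}(\Sigma^{i}P,I)\iso H^{-i}\A(P,I)$, and a connective dg category can have nonzero cohomology in negative degrees (e.g.\ $\A$ a connective dg algebra with $H^{-1}\A\neq 0$ and the split exact structure, which is Frobenius and hence $d$-minimal Auslander--Gorenstein). Worse, even granting that vanishing, extending the inflation $P_0\to I$ along $P_0\to N$ has its obstruction in $\Hom(N',\Sigma I)$ for $N'\in\Sigma\P\ast\cdots\ast\Sigma^{d}\K$, and this group receives the contribution $\Hom(\Sigma P_1,\Sigma I)=\Hom_{H^0(\A)}(P_1,I)$, which is certainly nonzero; so the construction does not even produce a morphism $N\to I$. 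The paper instead works with the filtration $\K\ast\Sigma\K\ast\cdots\ast\Sigma^{d}\K$, starts from the length-$(d+1)$ $\I$-coresolution of the bottom layer $K^0\in\K$ supplied by part (b), and pushes it up the filtration, using at each stage that the intermediate objects $N^j$ lie in $\ker\Ext^{\geq 1}(-,\I)$ so that the triangle $I\to U\to N^j\to\Sigma I$ splits and $N^j\oplus I$ inherits a shorter $\I$-coresolution. Your sketch invokes neither (b) nor the condition $\Ext^{\geq 1}(-,\I)=0$ built into the definition of $\B^{(d)}_{\K,\P,\I}$, which is the sign that the actual mechanism is missing. A secondary under-justified point is (e): the required statement is a quasi-equivalence $\overline{\A}\iso\overline{\tau_{\leq 0}\B^{(d+1)}_{\K,\P,\I}}$, whose substantive half is that every object $X$ of $\B^{(d+1)}_{\K,\P,\I}$ is a kernel of a retraction from an object of $H^0(\A)$; the paper proves this by a further induction showing each syzygy $X_i$ of $X$ has an $\I$-coresolution of length $i$ with $i$-th cosyzygy in $\A$, whence $X\oplus I_1\in H^0(\A)$, and a bare appeal to ``the reconstruction technique of \cite{Chen23a}'' does not supply this step.
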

\begin{proof}
Statement (a) is straightforward.
 
Let $K$ be an object in $\K$. 
By definition, there exists triangles in $\D^b(\A)$
\begin{equation}\label{tri:K}
\begin{tikzcd}
K=M^0\ar[r]& P^0\ar[r]& M^1\ar[r]&\Sigma K;
\end{tikzcd}
\end{equation}
\begin{equation*}
\begin{tikzcd}
M^1\ar[r]& P^1\ar[r]& M^2\ar[r]&\Sigma L^1;
\end{tikzcd}
\end{equation*}
\begin{equation*}
\begin{tikzcd}
\ldots
\end{tikzcd}
\end{equation*}
\begin{equation}\label{tri:Md-1}
\begin{tikzcd}
M^{d-1}\ar[r]& P^{d-1}\ar[r]& M^{d}\ar[r]&\Sigma M^{d-1},
\end{tikzcd}
\end{equation}
\begin{equation}\label{tri:Md}
\begin{tikzcd}
M^d\ar[r]& P^d\ar[r]& M^{d+1}\ar[r]&\Sigma M^d,
\end{tikzcd}
\end{equation}
where $P^{i}\in\P$ and $M^{i+1}\in \A$  for $0\leq i\leq d$.
We prove by induction that the object $M^{i}$ has an $\I$-coresolution of length $d+1-i$ for $0\leq i\leq d+1$.
Then statement (b) will follow from the case when $i=0$.
For $i=d+1$, the statement is clearly true. 
We assume that the statement holds for $M^{i+1}$.
Since $H^0(\A)$ is $d$-minimal Auslander--Gorenstein, the object $P^i$ has an $\I$-coresolution of length $d+1$. 
So there is a triangle
\[
P^i\rightarrow I^{i}\rightarrow U^i\rightarrow \Sigma P^i
\]
where $U^i$ has $\I$-coresolution of length $d$ and we have the following diagram in $\D^b(\A)$
\[
\begin{tikzcd}
M^i\ar[r]\ar[d,equal]&P^i\ar[r]\ar[d]&M^{i+1}\ar[r]\ar[d]&\Sigma M^i\ar[d,equal]\\
M^i\ar[r]&I^i\ar[r]\ar[d]&V^i\ar[r]\ar[d]&\Sigma M^i\\
&U^i\ar[r,equal]&U^i&
\end{tikzcd}.
\]
By the Horseshoe Lemma, the object $V^i$ has $\I$-coresolution of length $d-i$.
It follows that the object $M^i$ has $\I$-coresolution of length $d+1-i$ and this finishes the proof of the induction step.

Let us show statement (c). 
We have a quasi-fully faithful morphism $\pretr(\K)\hookrightarrow \D^b_{dg}(\A)$. 
Therefore it is enough to show that $\K\subseteq \P\ast\Sigma \K$ in $\D^b_{dg}(\A)$.
Let $K$ be an object in $\K$ with triangles (\ref{tri:K}--\ref{tri:Md}) 
where $P^{i}\in\P$ and $M^{i+1}\in \A$  for $0\leq i\leq d$.
Since $H^0(\A)$ has enough projectives, we have a triangle in $\D^b(\A)$
\begin{equation}\label{tri:K'}
\begin{tikzcd}
K'\ar[r] &P^{-1}\ar[r]&K\ar[r]&\Sigma K'.
\end{tikzcd}
\end{equation}
Then the triangle (\ref{tri:K'}) and triangles (\ref{tri:K}--\ref{tri:Md-1}) show that $K'\in \K$ and hence $\K\subseteq \P\ast\Sigma \K$.
From the triangles (\ref{tri:K}--\ref{tri:Md}), we have $\Ext^{i}(M^{j},P)\iso\Ext^{i+1}(M^{j+1},P)$ for $i\geq 1$ and $P\in\P$ and $j\geq 0$.
It follows that $\Ext^i(K,P)\iso \Ext^{i+d+1}(M^{d+1},P)$ for $i\geq 1$ and $P\in\P$.
Since the injective codimensions of projectives are $\leq d+1$, it follows that $\Ext^{\geq 1}(K,P)=0$ for $P\in\P$.
Since $\P$ consists of projective objects in $\A$, we also have $\Ext^{\geq 1}(P,K)=0$ for $P\in\P$.

Let us show statement (e). 
We have the following diagram
\[
\begin{tikzcd}
&&&\A\ar[d,hook]\ar[llldd,bend right=4ex,dashed,"\sim"swap]\\
&&\K\ar[r,hook]\ar[d,hook]&\D^b_{dg}(\A)\\
\tau_{\leq 0}\B_{\K,\P,\I}^{(d+1)}\ar[r]&\B_{\K,\P,\I}^{(d+1)}\ar[r,hook]&\pretr(\K)\ar[ru,dashed,hook,"\sim"swap]&
\end{tikzcd}
\]
Indeed, we need to show 
\begin{itemize} 
\item[(1)] the canonical morphism $\pretr(\K)\hookrightarrow \D^b_{dg}(\A)$ is quasi-dense, and 
\item[(2)] $\A$ is contained in $\B=\B_{\K,\P,\I}^{(d+1)}$, and 
\item[(3)] under the quasi-fully faithful morphism $\A\rightarrow \tau_{\leq 0}\B$, each object in $\B$ is the kernel of a retraction in $\A$.
\end{itemize}

For each object $X$ in $\A$, we have conflations in $H^0(\A)$
\begin{equation}\label{conf:X}
\begin{tikzcd}[row sep=small]
X_1\ar[r]& P_0\ar[r]& X\ar[r,dashed,"\delta_0"]&;\\
X_2\ar[r]& P_1\ar[r]& X_1\ar[r,dashed,"\delta_1"]&;\\
&\ldots&&\\
X_{d+1}=K_{d+1}\ar[r]& P_d\ar[r]& X_d\ar[r,dashed,"\delta_d"]&,
\end{tikzcd}
\end{equation}
where $P_i\in H^0(\P)$ for each $0\leq i\leq d$.
It follows that $K_{d+1}\in\K$.
This shows item (2). Since $\P$ is contained in $\K$,  item (1) also follows.

Let us show item (3). For an object $X$ in $\B$, it admits conflations (\ref{conf:X}) where $P_i$ is projective for each $0\leq i\leq d$ and $K_{d+1}\in\K$.
 Let us show that $X$ is in the closure of kernels of retractions in $\A$.
 We prove by induction on $i$ that $X_{i}$ admits an $\I$-coresolution of length $i$ with the $i$-th cosygyzy an object in $\A$ for each $0\leq i\leq d+1$.
 It obviously holds for $X_{d+1}=K_{d+1}\in H^0(\A)$: By item (b) the object $K_{d+1}$ admits an $\I$-coresolution of length $d+1$ with the $(d+1)$-cosyzygy an object in $\A$.
 

 Let us assume that it holds for $i+1\geq 1$. So we have a conflation
 \[
 X_{i+1}\rightarrow I_{i+1}\rightarrow U_{i+1}
 \]
 where $I_{i+1}\in\I$ and $U_{i+1}\in H^0(\A)$ admits an $\I$-coresolution of length $i$ with the $i$-th cosygyzy an object in $\A$.
We have the following diagram
\[
\begin{tikzcd}
X_{i+1}\ar[r]\ar[d]& P_i\ar[r]\ar[d]& X_i\ar[d,equal]\\
I_{i+1}\ar[r]\ar[d]&V_{i+1}\ar[r]\ar[d]&X_i\\
U_{i+1}\ar[r,equal]&U_{i+1}&
\end{tikzcd}
\]
From the second column, it follows by the Horseshoe Lemma that 
$V_{i+1}\iso X_i\oplus I_{i+1}\in H^0(\A)$ admits an $\I$-coresolution of length $i$ 
with the $i$-th cosygyzy an object in $\A$.
A similar argument to that of Proposition~\ref{prop:preclustertilting} (c) 
shows that the same is true for $X_i$.
This finishes the proof of the induction step.
We apply the above argument to the case $i=0$ and 
it follows that $X\oplus I_1\iso V_1\in \A$ and 
hence $X$ is in the closure under kernels of retractions in $H^0(\A)$.
This shows item (3) and we are done.

Let us show statement (d). 
Let $N$ be an object in $\B^{(d)}_{\K,\P,\I}$.
By definition, it admits triangles in $\tr(\K)$
\begin{equation}\label{tri:N}
\begin{tikzcd}
K^0\ar[r]& K^1\ar[r]& N^1\ar[r]&\Sigma K^0;
\end{tikzcd}
\end{equation}
\begin{equation*}
\begin{tikzcd}
N^1\ar[r]& K^2\ar[r]& N^2\ar[r]&\Sigma N^1;
\end{tikzcd}
\end{equation*}
\begin{equation*}
\begin{tikzcd}
\ldots
\end{tikzcd}
\end{equation*}
\begin{equation}\label{tri:Nd}
\begin{tikzcd}
N^{d-1}\ar[r]& K^{d}\ar[r]& N=N^{d}\ar[r]&\Sigma N^{d-1},
\end{tikzcd}
\end{equation}
where $K^i\in \K$ for $0\leq i\leq d$ and $N^j\in \B^{(d)}_{\K,\P,\I}$ for $1\leq j\leq d$.
By item (b), the object $K^0\in\K$ admits an $\I$-coresolution of length $d+1$. 
So we have the following diagram in $\tr(\K)$
\[
\begin{tikzcd}
K^0\ar[r]\ar[d]&K^1\ar[r]\ar[d]&N^1\ar[r]\ar[d,equal]&\Sigma K^0\ar[d]\\
I\ar[r]\ar[d]&U\ar[r]\ar[d]&N^1\ar[r]&\Sigma I\\
V\ar[r,equal]&V&&
\end{tikzcd}
\]
where $V$ admits an $\I$-coresolution of length $d$. 
By the Horseshoe Lemma, the object $U\iso N^1\oplus I$ also admits an $\I$-coresolution of length $d$. 
So we replace $N^1$ by $N^1\oplus I$ and we repeat the above argument. 
We see that $N=N^d$ admits an inflation into an object in $\I$. 
Hence $H^0(\I)$ is covariantly finite in $H^0(\B_{\K,\P,\I}^{(d)})$.
\end{proof}
\begin{remark}\label{rmk:wic}
For a $d$-precluster tilting triple $(\K,\P,\I)$, we have that $\B_{\K,\P,\I}^{(d+1)}$ is stable under kernels of retractions in $\pretr(\K)$. The proof is similar to that of \cite[Proposition 3.10]{Chen23a}.
Via the quasi-equivalence $\pretr(\K_{\A})\rightarrow \D^b_{dg}(\A)$, the canonical morphism $\A\rightarrow \tau_{\leq 0}\B_{\K_{\A},\P_{\A},\I_{\A}}$ identifies $\tau_{\leq 0}\B_{\K_{A},\P_{\A},\I_{\A}}$ with $\overline{\A}$, cf.~Lemma~\ref{lem:d-minimal AG exact dg category} (e).
\end{remark}
For a $d$-precluster tilting triple $(\K,\P,\I)$, 
put $\B=\B^{(d+1)}_{\K,\P,\I}$ and $\B'=\tau_{\leq 0}\B$.
\begin{lemma}\label{lem:pretri}
Let  $(\K,\P,\I)$ be a $d$-precluster tilting triple. 
Then we have a canonical quasi-equvalence $\D^b_{dg}(\B')\iso\pretr(\K)$.

\end{lemma}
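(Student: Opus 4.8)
The goal is to show that for a $d$-precluster tilting triple $(\K,\P,\I)$, with $\B=\B^{(d+1)}_{\K,\P,\I}$ and $\B'=\tau_{\leq 0}\B$, the bounded dg derived category $\D^b_{dg}(\B')$ is canonically quasi-equivalent to $\pretr(\K)$. The idea is to mimic the analogous statement in \cite{Chen23a} for $d$-cluster tilting pairs, using that $\K$ already plays the role of the "syzygy category" sitting inside a triangulated category. By Proposition~\ref{prop:preclustertilting}, $\B$ is an extension-closed dg subcategory of $\pretr(\K)$ and hence carries an inherited exact dg structure; since $\pretr(\K)$ is strictly connective-truncatable in the relevant sense, the truncation $\B'=\tau_{\leq 0}\B$ is a connective exact dg category equivalent (in $\Hqe$) to $\B$ in a way compatible with the exact structures. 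So it suffices to produce a canonical quasi-equivalence $\D^b_{dg}(\B')\iso\pretr(\K)$, or equivalently $\D^b_{dg}(\B)\iso \pretr(\K)$.

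\textbf{Key steps.} First I would construct the comparison morphism: the inclusion of exact dg categories $\B\hookrightarrow \pretr(\K)$ (where $\pretr(\K)$ carries its canonical exact/pretriangulated structure, or rather: $\B$ sits in the triangulated category $\tr(\K)$ with the split-then-shift conflations) induces by the universal property of the bounded dg derived category a canonical morphism $\Phi\colon \D^b_{dg}(\B)\rightarrow \pretr(\K)$. Here one uses that $\pretr(\K)$, viewed as an exact dg category via the triangulated structure on $\tr(\K)$, is already "derived-complete", so that the Yoneda-type functor factors through $\D^b_{dg}(\B)$; this is exactly the mechanism used in \cite{Chen23} and \cite{Chen23a}. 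Second, I would check $\Phi$ is quasi-fully faithful. The point is that $\B$ generates $\pretr(\K)$ as a triangulated category (every object of $\K$, hence of $\pretr(\K)$, is built from objects of $\B$ by cones and shifts — indeed $\K\subseteq \B$ and $\Sigma^i\K$ appear in the defining filtration), and that $\Hom$-spaces between objects of $\B$ are computed the same way in $\D^b(\B)$ and in $\tr(\K)$. The crucial input for the latter is the homological orthogonality coming from Definition~\ref{def:d-precluster tilting triple}~(4) together with part~(c) and~(b) of Lemma~\ref{lem:d-minimal AG exact dg category}-type computations: the long exact sequences associated to the conflations defining objects of $\B$ let one identify $\Ext^\bullet_{\D^b(\B)}$ with $\Hom_{\tr(\K)}(-,\Sigma^\bullet -)$ by dimension shift, using that $\P$ consists of projectives of $\B$ and that the relevant higher Ext-groups into $\P$ and $\K$ vanish. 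Third, I would check $\Phi$ is quasi-dense: since $\K\subseteq \B$ (taking $n=d+1$ and using property (5) iteratively, $\K\subseteq \P\ast\Sigma\K\subseteq\ldots$, so $\K\subseteq \K\ast\Sigma\K\ast\cdots\ast\Sigma^{d+1}\K\cap\ker\Ext^{\geq 1}(-,\I)$, the last intersection condition being automatic by Lemma-type computations / property (4)), the essential image of $\Phi$ contains $H^0(\K)$ and is triangulated, hence is all of $\tr(\K)=H^0(\pretr(\K))$.

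\textbf{Main obstacle.} The delicate point is the quasi-full-faithfulness of $\Phi$, specifically the claim that morphism complexes between objects of $\B$ are not altered when passing from $\B$ to its bounded dg derived category, i.e.\ that $\B\hookrightarrow \pretr(\K)$ is "homologically faithful" and that no new extensions appear. Concretely one must verify that a conflation in $\B$ (inherited from a triangle in $\tr(\K)$) has vanishing "negative Ext" realized correctly, and that the exact dg structure on $\B$ has no hidden morphisms in positive degrees after truncation — this is where passing to $\B'=\tau_{\leq 0}\B$ matters and where one needs the connectivity hypotheses (1) on $\P$ and the placement of $\K$ relative to $\P$ via (4). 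I expect the argument to proceed by a dévissage along the canonical filtration $\P\ast\Sigma\P\ast\cdots\ast\Sigma^d\P\ast\Sigma^{d+1}\K$ established in the proof of Proposition~\ref{prop:preclustertilting}, reducing the Hom-comparison to the known behaviour of $\P$ (projectives, so $\Hom$ is exact in them) and of $\K$ inside $\D^b_{dg}(\A)\simeq\pretr(\K)$ via Lemma~\ref{lem:d-minimal AG exact dg category}, invoking that in the $d$-minimal Auslander--Gorenstein case one has the explicit derived equivalence $\pretr(\K_\A)\iso\D^b_{dg}(\A)$ and $\A\simeq\B'$ already, which pins down the general case by the bijection of Theorem~\ref{thm:Iyama--Solberg correspondence}. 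The rest is bookkeeping with the Horseshoe Lemma and the octahedral axiom, already exercised repeatedly above.
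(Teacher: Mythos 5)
Your proposal follows essentially the same route as the paper: the canonical functor $\D^b_{dg}(\B')\rightarrow\pretr(\K)$ induced by $\B'\hookrightarrow\B\hookrightarrow\pretr(\K)$ is shown to be quasi-fully faithful by dimension-shifting along conflations $X_1\rightarrowtail P_0\twoheadrightarrow X$ with $P_0\in\P$, using that $\Ext^{\geq 1}(P_0,Y)$ vanishes both in $\D^b(\B')$ ($P_0$ projective) and in $\tr(\K)$ (property (4)), and quasi-dense because $\K\subseteq\B$ generates $\tr(\K)$. The only slight imprecision is your aside that one may "equivalently" work with $\D^b_{dg}(\B)$ — since $\B$ is not connective one really must truncate to $\B'$ first, as you in fact acknowledge later.
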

\begin{proof}
By Proposition~\ref{prop:preclustertilting}, the dg category $\B$ is stable under extensions in $\pretr(\K)$ 
and inherits a canonical exact structure. 
Let $(H^0(\B),\mathbb E,\mathfrak s)$ be the canonical extriangulated structure.

We have the following diagram in $\Hqe$
\[
\begin{tikzcd}
\B'=\tau_{\leq 0}\B\ar[d,hook]\ar[r,hook] &\B\ar[r,hook] &\pretr(\K)\\
\D_{dg}^{b}(\B')\ar[rru,dashed]&&
\end{tikzcd}
\]
Let $X$ be an object in $\B'$. 
Then we have a conflation in $H^0(\B')$
\begin{equation}\label{conflation:X}
X_1\rightarrow P_0\rightarrow X
\end{equation}
where $P_0\in H^0(\P)$.
Let $Y$ be any object in $\B'$.
Then we have 
\[
\Hom_{\D^b(\B')}(X,\Sigma Y)\iso \Hom_{\tr(\K)}(X,\Sigma Y)
\]
since both are isomorphic to $\mathbb E(X,Y)$.
The conflation (\ref{conflation:X}) gives rise to triangles in $\tr(\K)$ and in $\D^b(\B')$.
We apply the functor $\Hom_{\D^b(\B')}(-,Y)$ to the triangle in $\D^b(\B')$ corresponding to the conflation (\ref{conflation:X}) and we obtain a long exact sequence
\begin{align*}
\Hom(X,Y)&\rightarrow \Hom(P_0,Y)\rightarrow \Hom(X_1,Y)\rightarrow \Ext^1(X,Y) \\&\rightarrow\Ext^1(P_0,Y)=0\rightarrow\Ext^1(X_1,Y)\rightarrow \Ext^2(X,Y)\rightarrow \Ext^2(P_0,Y)=0\rightarrow \cdots
\end{align*}
We also have a corresponding long exact sequence in $\pretr(\K)$.
Note that $\Ext^{\geq 1}_{\D^b(\B')}(P_0,Y)$ vanishes since $P_0$ is projective in $\B'$ and $\Ext^{\geq 1}_{\tr(\K)}(P_0,Y)$ vanishes by the definition of $d$-precluster tilting triple.
It follows that 
\[
\Hom_{\D^b(\B')}(X,\Sigma^2Y)\iso\Hom_{\tr(\K)}(X,\Sigma^2 Y)
\]
 and similarly for higher extension groups.
It follows that $\pretr(\K)$ is quasi-equivalent to $\D^b_{dg}(\B')$.
\end{proof}

\begin{proposition}\label{prop:K}
Let  $(\K,\P,\I)$ be a $d$-precluster tilting triple. 
We identify $\pretr(\K)$ with $\D^b_{dg}(\B')$ by Lemma~\ref{lem:pretri}.
Then the closure of $\K$ under kernels of retractions in $\D^b_{dg}(\B')$ is the full dg subcategory of $\D^b_{dg}(\B')$ consisting of the objects $K$, which are (d+1)-th syzygies of the objects in $\B'$.
\end{proposition}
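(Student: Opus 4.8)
The plan is to recognise $\B'$ as a $d$-minimal Auslander--Gorenstein exact dg category, apply Lemma~\ref{lem:d-minimal AG exact dg category} to it, and then compare, inside $\D^b_{dg}(\B')\simeq\pretr(\K)$, the given subcategory $\K$ with the $(d+1)$-st syzygy subcategory of $\B'$.

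First I would set the stage. By Proposition~\ref{prop:preclustertilting}, $\B=\B^{(d+1)}_{\K,\P,\I}$ is extension-closed in $\pretr(\K)$ and, with its inherited exact dg structure, is $d$-minimal Auslander--Gorenstein, with projectives $\Q_{\K,\P,\I}$ (the closure of $H^0(\P)$ under summands) and projective-injectives $\J_{\K,\P,\I}$; since $H^0(\B')=H^0(\B)$, the connective exact dg category $\B'=\tau_{\leq 0}\B$ is again $d$-minimal Auslander--Gorenstein with the same projectives and projective-injectives. By Lemma~\ref{lem:pretri} we identify $\D^b_{dg}(\B')$ with $\pretr(\K)$. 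Using condition~(4) of Definition~\ref{def:d-precluster tilting triple} together with the trivial $\ast$-filtration, one checks that every object of $\P$ and every shift $\Sigma^jK$ with $0\le j\le d+1$ lies in $\B$; hence $\P$ and $\K$ are subcategories of $\B'$, and under the identification $\P$ becomes, up to direct summands, the subcategory of projectives of $\B'$. Applying Lemma~\ref{lem:d-minimal AG exact dg category} to $\A=\B'$ then introduces the full dg subcategory $\K_{\B'}$ of $\D^b_{dg}(\B')$ on the $(d+1)$-st syzygies of objects of $\B'$ — precisely the category in the statement — which, since $\B'$ is weakly idempotent complete (Remark~\ref{rmk:wic}), is closed under kernels of retractions. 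So it remains to establish the two inclusions $\K\subseteq\K_{\B'}$ and $\K_{\B'}\subseteq\overline{\K}$, where $\overline{\K}$ is the closure of $\K$ under kernels of retractions in $\D^b_{dg}(\B')$.

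For $\K\subseteq\K_{\B'}$: given $K\in\K$, the shifts $\Sigma^jK$ for $1\le j\le d+1$ are objects of $\B'$ by the observation above, so, taking $M^i=\Sigma^iK$ and all projective terms $P^i=0$, the corresponding syzygy sequence $M^0\to P^0\to M^1\to\cdots\to M^{d+1}$ exhibits $K$ as a $(d+1)$-st syzygy of $\Sigma^{d+1}K\in\B'$; thus $K\in\K_{\B'}$ and hence $\overline{\K}\subseteq\K_{\B'}$. For $\K_{\B'}\subseteq\overline{\K}$: let $L\in\K_{\B'}$, realised by a syzygy sequence $L=M^0\to P^0\to M^1\to\cdots\to M^{d+1}=X$ in $\D^b(\B')$ with $P^i$ projective and $M^{i+1}\in\B'$. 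By the proof of Proposition~\ref{prop:preclustertilting}, $\B^{(d+1)}_{\K,\P,\I}=\P\ast\Sigma\P\ast\cdots\ast\Sigma^d\P\ast\Sigma^{d+1}\K$, so from $X\in\B'$ we obtain $K_0\in\K$ and a chain $X=Y_0\to Y_1\to\cdots\to Y_{d+1}=K_0$ whose successive homotopy fibres come from $\Sigma^{\bullet}\P$; after rotating, this is a second realisation of $K_0$ as a $(d+1)$-st syzygy of $X$. Comparing the two realisations step by step — using that the objects of $\P$ are projective in $\B'$, so that the comparison morphisms exist in $\D^b_{dg}(\B')$, together with dimension shifting — I would conclude that $L$ and $K_0$ agree in $\D^b_{dg}(\B')$ up to direct summands of objects of $\P$. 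Since $\I\subseteq\P\subseteq\K$, such summands lie in $\overline{\K}$; together with $K_0\in\K$ this gives $L\in\overline{\K}$.

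The hard part will be this last comparison: it is a Schanuel-type statement for $(d+1)$-st syzygies in the exact dg category $\B'$, and care is needed because the second chain comes from the $\ast$-decomposition of $\B^{(d+1)}_{\K,\P,\I}$ and consists a priori only of triangles, not of conflations. Making it precise is where the $d$-minimal Auslander--Gorenstein hypothesis on $\B'$ — notably the projectivity of $\P$ and the bound on the injective codimension of projectives — and the weak idempotent completeness of $\B'$ really enter; everything else is bookkeeping with the $\ast$-calculus and the equivalences already established.
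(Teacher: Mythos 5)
Your overall structure (two inclusions) and your treatment of the hard direction $\K_{\B'}\subseteq\overline{\K}$ — producing a second $(d+1)$-st syzygy sequence for $M^{d+1}$ from the $\ast$-decomposition $\B^{(d+1)}_{\K,\P,\I}=\P\ast\Sigma\P\ast\cdots\ast\Sigma^d\P\ast\Sigma^{d+1}\K$ and comparing the two realisations step by step using projectivity of $\P$ — is exactly the paper's argument, and the comparison you defer does go through: at each stage one gets a conflation $M^i\rightarrow N^i\oplus P^i\rightarrow Q^i$ with $Q^i$ projective, hence split, yielding $K'\oplus P\iso K\oplus Q$ with $P,Q\in\P\subseteq\K$.

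The inclusion $\K\subseteq\K_{\B'}$ is where your proposal breaks down. The shift $\Sigma^jK$ for $1\leq j\leq d+1$ does \emph{not} lie in $\B=\B^{(d+1)}_{\K,\P,\I}$ in general: membership requires $\Ext^{\geq 1}(\Sigma^jK,\I)=0$, i.e.\ $\Hom_{\tr(\K)}(K,\Sigma^{i-j}I)=0$ for all $i\geq 1$ and $I\in\I$, and taking $i=j$ this forces $\Hom(K,\I)=0$ — already false for $K=I\in\I$ nonzero. Condition (4) of Definition~\ref{def:d-precluster tilting triple} controls only positive shifts of $\P$, not the negative shifts appearing here; and a ``syzygy sequence'' with all projective terms equal to $0$ is in any case not a chain of conflations in $H^0(\B')$. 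The correct argument is the one via property (2): since $H^0(\I)$ is covariantly finite in $H^0(\B^{(d)}_{\K,\P,\I})$, each $K\in\K$ admits, as in Lemma~\ref{lem:d-minimal AG exact dg category}, an $\I$-coresolution $K\rightarrow I^0\rightarrow\cdots\rightarrow I^d$ of length $d+1$ whose cosyzygies remain in $\B'$; since the $I^i$ are projective in $\B'$, reading these conflations backwards exhibits $K$ as a $(d+1)$-st syzygy of an object of $\B'$. Relatedly, your appeal to weak idempotent completeness to get that $\K_{\B'}$ is closed under kernels of retractions is not a proof: one must show that a summand $M$ of $K\in\K$ with complement $K'\in\K$ again admits an $\I$-coresolution of length $d+1$, which the paper does by splicing the coresolutions of $K$ and $K'$ via the Horseshoe Lemma. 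Both of these gaps are repairable, but they require property (2) and the Horseshoe Lemma, neither of which enters your proposed argument for this direction.
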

\begin{proof}
By Proposition~\ref{prop:preclustertilting}, the closure $\Q_{\K,\P,\I}$ (resp.~$\J_{\K,\P,\I}$) of the dg category $\P$ (resp.~$\I$) under kernels of retractions is the full dg subcategory of $\B'$ on the projectives (resp.~projective-injectives).

Let $K$ be an object in $\D^b(\B')$ which admits triangles (\ref{tri:K}--\ref{tri:Md}) where $P^i\in\Q_{\K,\P,\I}$ and $M^{i+1}\in \B'$  for $0\leq i\leq d$. We may assume that $\P^i$ lies in $\P$ for $0\leq i\leq d$.
By definition of $\B'$, the object $M^{d+1}$ admits triangles in $\D^b(\B')$
\begin{equation*}
\begin{tikzcd}
K'=N^0\ar[r]& Q^0\ar[r]& N^1\ar[r]&\Sigma K';
\end{tikzcd}
\end{equation*}
\begin{equation*}
\begin{tikzcd}
N^1\ar[r]& Q^1\ar[r]& N^2\ar[r]&\Sigma L^1;
\end{tikzcd}
\end{equation*}
\begin{equation*}
\begin{tikzcd}
\ldots
\end{tikzcd}
\end{equation*}
\begin{equation*}\label{tri:Md-1'}
\begin{tikzcd}
N^{d-1}\ar[r]& Q^{d-1}\ar[r]& N^{d}\ar[r]&\Sigma N^{d-1},
\end{tikzcd}
\end{equation*}
\begin{equation*}\label{tri:Md'}
\begin{tikzcd}
N^d\ar[r]& Q^d\ar[r]& M^{d+1}\ar[r]&\Sigma N^d,
\end{tikzcd}
\end{equation*}
where $Q^{i}\in\P$ and $N^{i}\in \B'$  for $0\leq i\leq d$, and $K'\in\K$.
Since objects in $\P$ are projective in $H^0(\B')$, we have the following diagram
\[
\begin{tikzcd}
M^d\ar[r]\ar[d,dashed]&P^d\ar[r]\ar[d,dashed]&M^{d+1}\ar[d,equal]\ar[r]&\Sigma M^d\ar[d]\\
N^d\ar[r]&Q^d\ar[r]&M^{d+1}\ar[r]&\Sigma N^d
\end{tikzcd}
\]
where the sequence
\[
\begin{tikzcd}
M^d\ar[r]&N^d\oplus P^d\ar[r]&Q^d
\end{tikzcd}
\]
is a conflation and is thus split. We repeat the argument and we see that we have an isomorphism $K'\oplus P\iso K\oplus Q$ for some  objects $P$ and $Q$ in $\P$.
Hence $K$ is an object in the closure of $\K$ under kernels of retractions.

Conversely, let $M$ be an object in $\D^b_{dg}(\B')$ such that $M\oplus K'\iso K$ for some objects $K$ and $K'$ in $\K$.
By property (2) of Definition~\ref{def:d-precluster tilting triple}, 
the subcategory $H^0(\I)$ is covariantly finite in $H^0(\B^{(d)}_{\K,\P,\I})$.
Hence the objects in $\K$ are (d+1)-th syzygies of the objects in $\B'$.
We have the following diagram in $\D^b(\B')$
\[
\begin{tikzcd}
M\ar[r]\ar[d,equal]&K\ar[r]\ar[d]&K'\ar[d]\\
M\ar[r]&I\ar[r]\ar[d]&U\ar[d]\\
&V\ar[r,equal]&V
\end{tikzcd}
\]
where $V$ admits an $\I$-coresolution of length $d$. By the Horseshoe Lemma, the object $U$ also admits an $\I$-coresolution of length $d$ and hence $M$ is a (d+1)-th syzygy of the objects in $\B'$.

\end{proof}

\begin{proof}[Proof of Theorem~\ref{thm:Iyama--Solberg correspondence}]
Let $\A$ be a connective $d$-minimal Auslander--Gorenstein exact dg category.  
Let $(\K_{\A},\P_{\A},\I_{\A})$ be the triple formed by the full dg subcategory $\P_{\A}$ on the projectives of $\A$ 
and its full dg subcategory $\I_{\A}$ of projective-injectives 
and the full dg subcategory $\K_{\A}$ of $\D^b_{dg}(\A)$ consisting of the objects $K$  
which are $(d+1)$-st syzygies of objects in $\A$.
By Lemma~\ref{lem:d-minimal AG exact dg category}, the triple $(\K_{\A},\P_{\A},\I_{\A})$ is $d$-precluster tilting.
Therefore the map sending $\A$ to $(\K_{\A},\P_{\A},\I_{\A})$ is well-defined.

Let $(\K,\P,\I)$ be a $d$-precluster tilting triple. 
By Proposition~\ref{prop:preclustertilting}, the connective exact dg category $\tau_{\leq 0}\B_{\K,\P,\I}^{(d+1)}$ is $d$-minimal Auslander--Gorenstein. Therefore the map sending $(\K,\P,\I)$ to $\tau_{\leq 0}\B_{\K,\P,\I}^{(d+1)}$ is well-defined.

By Lemma~\ref{lem:d-minimal AG exact dg category} (e), for a connective $d$-minimal Auslander--Gorenstein exact dg category $\A$, we have that $\A$ and $\tau_{\leq 0}\B_{\K_{\A},\P_{\A},\I_{\A}}^{(d+1)}$ are equivalent in the sense of Definition~\ref{def:equ}.

Let $(\K,\P,\I)$ be a $d$-precluster tilting triple. 
Put $\B'=\tau_{\leq 0}\B_{\K,\P,\I}^{(d+1)}$.
By Proposition~\ref{prop:preclustertilting} and Proposition~\ref{prop:K}, we have a quasi-equivalence $\pretr(\K)\iso \D^b_{dg}(\B')$ which shows that the $d$-precluster tilting triples $(\K,\P,\I)$ and $(\K_{\B'},\P_{\B'},\I_{\B'})$ are equivalent.
\end{proof}
\begin{example}
{\rm Let $k$ be a field. 
Let $\K$ be an extension-closed subcategory of a pretriangulated dg category $\T_{dg}$.
Then $\K$ inherits a canonical exact structure which we assume to be Frobenius.
Let $\P$ be the full dg subcategory of $\K$ consisting of the projectives and $\I\subset \P$ an additive full dg subcategory which is covariantly finite in $\T=H^0(\T_{dg})$.
We assume that $\P$ is a connective dg category.
Then the triple $(\K,\P,\I)$ is $d$-precluster tilting for any $d\geq 0$.
Indeed, for each object $K$ in $\K$, we have a triangle in $\T$
\[
K'\rightarrow P\rightarrow K\rightarrow \Sigma K'
\]
where $P\in\P$ and $K'\in \K$, since $\K$ is Frobenius.
Then we have  
\[
\Hom_{\T}(Q,\Sigma^{i}K)\iso \Hom_{\T}(Q,\Sigma^{i+1}K')
\]
 for $i\geq 1$ and $Q\in \P$, by the assumption that $\P$ is connective.
 Since $\K$ is Frobenius, we have that $\Hom_{\T}(Q,\Sigma K)=0$ for $Q\in \P$ and $K\in\K$.
 Therefore we have that $\Hom_{\T}(Q,\Sigma^{i}K)=0$ for $i\geq 1$ and $Q\in \P$ and $K\in\K$.
 Similarly we have that $\Hom_{\T}(K,\Sigma^{i}Q)=0$ for $i\geq 1$ and $Q\in \P$ and $K\in\K$.
 
 We have the following two classes of examples.
 The first class of examples is taken from \cite{Wu23a}.
 Let $(Q,F,W)$ be a Jacobi-finite ice quiver with potential. 
Denote by $\Gamma_{rel}$ the relative Ginzburg dg algebra $\Gamma_{rel}(Q,F,W)$, which is a connective dg $k$-algebra.
Let $e=\sum_{i\in F}e_i$ be the idempotent associated with all frozen vertices. 
Let $\pvd_{e}(\Gamma_{rel})$ be the full subcategory of $\pvd(\Gamma_{rel})$ of the dg $\Gamma_{rel}$-modules whose restriction to frozen vertices is acyclic.
Then the {\em relative cluster category} $\C=\C(Q,F,W)$ associated to $(Q,F,W)$ is defined as the Verdier quotient of triangulated categories
\[
\per(\Gamma_{rel})/\pvd_{e}(\Gamma_{rel}).
\]
Let $\pi^{rel}:\per(\Gamma_{rel})\rightarrow \C$ be the quotient functor. 
Since $\Hom_{\per\Gamma_{rel}}(\P,\pvd_{e}(\Gamma_{rel}))=0$, the functor $\pi^{rel}$ restricted to the thick category generated by $\P$ is fully faithful.
By \cite[Theorem 5.42]{Wu23a}, the {\em Higgs category} $\mathcal H$ is equal to the intersection ${^{\perp_{>0}}\P\cap\P^{\perp_{>0}}}$ in $\C$. 
In particular, it is extension-closed in $\C$.
Let $\I\subseteq \P$ be any full additive dg subcategory.
We take canonical dg enhancements of the above categories, where we add the subscripts $dg$ correspondingly.
Then $(\mathcal{H}_{dg},\P_{dg},\I_{dg})$ is $d$-precluster tilting for $d\geq 0$.
 
  The second class is taken from \cite{Jin20}. 
  Let $A$ a dg $k$-algebra. Recall that
\begin{itemize}
\item $A$ is {\em proper} if $\sum_{i\in\mathbb Z}\dim H^i(A)<\infty$; 
\item $A$ is {\em Gorenstein} if the {\em perfect derived category} $\per(A)$ coincides with the thick subcategory of $\D(A)$ generated by $DA$, where $D=\Hom(-,k)$ is the $k$-dual.
\end{itemize}
Let $A$ be a connective proper Gorenstein dg algebra. We denote by $\pvd(A)$ the full subcategory of $\D(A)$ on the perfectly valued dg $A$-modules, i.e.~dg $A$-modules whose total cohomology is finite-dimensional.
A dg $A$-module $M\in\pvd(A)$ is {\em Cohen--Macaulay} \cite[Definition 2.1]{Jin20} if it is connective and if $\Hom_{\D(A)}(M,\Sigma^iA)=0$ for $i>0$.
We denote by $\CM(A)$ the full subcategory of $\pvd(A)$ on the Cohen--Macaulay dg $A$-modules.
Let $I$ be a direct summand of $A$.
Let $\K=\CM_{dg}(A)$ be the canonical dg enhancement of $\CM(A)$ and $\P=\add(A)$ and $\I=\add(I)$ the corresponding full dg subcategories.
Then the triple $(\K,\P,\I)$ is  $d$-precluster tilting for each $d\geq 0$.
Let $\pvd^{\leq 0}(A)$ be the full subcategory of $\pvd(A)$ consisting of the dg $A$-modules whose cohomology is concentrated in degrees $\leq i$.
The $d$-minimal Auslander--Gorenstein exact dg category $\B_{\K,\P,\I}^{(d+1)}$ is a full dg subcategory of $\pvd_{dg}^{\leq 0}(A)$.
When $I=0$, we have that $\pvd_{dg}^{\leq 0}(A)$ is the union of $\B_{\K,\P,\I}^{(d+1)}$ where $d$ runs through non-negative integers, cf.~\cite[Proof of Theorem 3.7]{Jin20}.
}
\end{example}


\subsection{Refinement to an $\infty$-equivalence}
The correspondence in Theorem~\ref{thm:Iyama--Solberg correspondence} can be refined to an equivalence of $\infty$-groupoids, cf.~Theorem~\ref{thm:infinitygroupoids}.
In this subsection, for a dg $k$-category $\A$, we denote by $\overline{\A}$ the full dg subcategory of $\pretr(\A)$ whose objects are the kernels of retractions of objects in $\A$.
Note that if $\A$ is $d$-minimal Auslander--Gorenstein, then so is $\overline{\A}$. Also, if $(\K,\P,\I)$ is a $d$-precluster tilting triple, then so is $(\overline{\K},\overline{\P},\overline{\I})$.


Let $\F$ be the category whose objects are $d$-precluster tilting triples $(\K,\P, \I)$, and whose morphisms $(\K,\P,\I)\rightarrow (\K',\P',\I')$ are given by quasi-equivalences $\overline{\K}\rightarrow\overline{\K'}$ 
which restrict to quasi-equivalences $\overline{\P}\rightarrow \overline{\P'}$ and $\overline{\I}\rightarrow \overline{\I'}$.
Let $\G$ be the category whose objects are connective $d$-minimal Auslander--Gorenstein exact dg categories $\A$, and whose morphisms $\A\rightarrow\A'$ are given by 
exact quasi-equivalences $\overline{\A}\rightarrow\overline{\A'}$ of exact dg categories. 

We denote by $N$ the nerve functor $N: \cat\rightarrow \sSet$ from the category $\cat$ of small categories to the category $\sSet$ of simplicial sets, cf.~\cite[1.4]{Cisinski19}. 
It sends a category $\E$ to an $\infty$-category $N(\E)$.
We denote by $h(-):\sSet\rightarrow \cat, X\mapsto h(X)$, its left adjoint. 
An $\infty$-category $\C$ is an $\infty$-groupoid if and only if $h(\C)$ is a $1$-groupoid, cf.~\cite[1.6.7]{Cisinski19}, if and only if $\C$ is a Kan complex, cf.~\cite[3.5.1]{Cisinski19}.
For a subcategory $W$ of an $\infty$-category $\C$, we have the $\infty$-localization~\cite[7.1.2]{Cisinski19} $\C\rightarrow W^{-1}\C$ of $\C$ by $W$.
Then by abstract nonsense, the functor $h(\C)\rightarrow h(W^{-1}\C)$ exhibits $h(W^{-1}\C)$ as the Gabriel--Zisman localization~\cite{GabrielZisman67} of the category $h(\C)$ at the class of morphisms in $h(W)$.
By construction, if $W=\C$ then we have that $h(W^{-1}\C)$ is a groupoid and hence $W^{-1}\C$ is an $\infty$-groupoid.
We denote by $\F'$ the $\infty$-groupoid obtained from $N(\F)$ by the $\infty$-localization at all morphisms.
Similarly, we denote by $\G'$ the $\infty$-groupoid obtained from $N(\G)$ by the $\infty$-localization at all morphisms.
 Let $\tilde{\G}\subseteq \G$ be the full subcategory formed by the objects whose underlying dg category is cofibrant and $\tilde{\G'}$ the $\infty$-localization of $\tilde{\G}$ at all morphisms. 
 We have the cofibrant resolution functor $Q: \G\rightarrow \tilde{\G}, \A\mapsto Q(\A)$. Here we take the exact structure on $Q(\A)$ transferred along the quasi-equivalence $Q(\A)\rightarrow \A$.
Then the inclusion $\inc: \tilde{\G}\rightarrow \G$ induces an equivalence of $\infty$-groupoids $\tilde{\G'}\iso{\G'}$.
Similarly, let $\tilde{\F}\subseteq \F$ be the full subcategory of $d$-precluster tilting triples $(\K,\P,\I)$ such that $\K$ is cofibrant. Let $\tilde{\F'}$ be the $\infty$-localization of $\tilde{\F}$ at all morphisms. 
 We have a functor $Q: \F\rightarrow \tilde{\F}, (\K,\P,\I)\mapsto (Q(\K),\P',\I')$ where $Q(\K)$ is a cofibrant resolution of $\K$ and $\P'$ (resp.~$\I'$) are the essential preimages of $\P$ (resp.~$\I$) under the equivalence $H^0(Q(\K))\iso H^0(\K)$.
Then the inclusion $\inc: \tilde{\F}\rightarrow \F$ induces an equivalence of $\infty$-groupoids $\tilde{\F'}\iso{\F'}$.

We have the truncation functor $\tau_{\leq 0}: \A\mapsto \tau_{\leq 0}\A$ and for each connective dg category $\A$, the canonical dg functor $\tau_{\leq 0}\A\rightarrow \A$ is a quasi-equivalence. 
Therefore, we may assume that the objects in $\G$, $\tilde{\G}$, $\G'$ and $\tilde{\G'}$ are all strictly connective. 

The idea of the proof the following theorem is rather simple: we construct a pair of (ordinary) functors $L:\F\rightarrow \G$ and $R\circ Q:\G\rightarrow \F$ which induces a pair of $\infty$-functors between the $\infty$-groupoids $\F'$ and $\G'$, and then try to show that they are $\infty$-equivalences by showing that there exist natural transformations $R\circ Q\circ L\rightarrow \Id_{\F}$ and $\Id_{\G}\rightarrow L\circ R\circ Q$.
It is not straightforward to carry out this strategy  because in certain constructions, the induced morphisms are not canonical.
\begin{theorem}\label{thm:infinitygroupoids}
We have an equivalence of $\infty$-groupoids $\F'\iso \G'$.
\end{theorem}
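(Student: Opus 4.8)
The plan is to carry out exactly the strategy announced just before the statement. I would construct two honest functors of ordinary categories, $L\colon\F\to\G$ sending a $d$-precluster tilting triple to the $d$-minimal Auslander--Gorenstein exact dg category built from it, and $R$ sending such a dg category to the triple of its projectives, projective-injectives and $(d+1)$-st syzygies, to be precomposed with the cofibrant resolution functor $Q\colon\G\to\tilde{\G}$. These induce, after $\infty$-localization at all morphisms, a pair of $\infty$-functors $\F'\to\G'$ and $\G'\to\F'$. To see that they are mutually inverse equivalences I would exhibit natural transformations $R\circ Q\circ L\Rightarrow\Id_{\F}$ and $\Id_{\G}\Rightarrow L\circ R\circ Q$ and invoke the elementary fact that, when one localizes at \emph{all} morphisms of a category, any natural transformation between two functors becomes a natural isomorphism after localization; hence the induced $\infty$-functors become mutually inverse (a finite zig-zag of natural transformations would serve equally well). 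Throughout I would replace $\F$ and $\G$ by the full subcategories $\tilde{\F}$ and $\tilde{\G}$ of cofibrant objects, which is harmless since $\inc$ induces $\infty$-equivalences $\tilde{\F'}\iso\F'$ and $\tilde{\G'}\iso\G'$, and work with strictly connective representatives via $\tau_{\leq 0}$.

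On objects, $L$ is the assignment $(\K,\P,\I)\mapsto\tau_{\leq 0}\B^{(d+1)}_{\K,\P,\I}$, which by Proposition~\ref{prop:preclustertilting} (and the proof of Theorem~\ref{thm:Iyama--Solberg correspondence}) lands in connective $d$-minimal Auslander--Gorenstein exact dg categories. A morphism of $\F$ is a quasi-equivalence $\overline{\K}\to\overline{\K'}$ restricting compatibly on $\overline{\P}$ and $\overline{\I}$; since $\B^{(d+1)}$ is carved out of $\pretr(\K)$ by the $\ast$-construction applied to $\K$ and $\P$ together with the vanishing condition $\ker\Ext^{\geq 1}(-,\I)$ --- all of which are preserved by such a quasi-equivalence once $\pretr(-)$ has been realized functorially on cofibrant dg categories --- one obtains an induced quasi-equivalence of the $\tau_{\leq 0}\B^{(d+1)}$'s, compatible with the inherited exact structures. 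On objects, $R$ is the triple $(\K_{\A},\P_{\A},\I_{\A})$ of Lemma~\ref{lem:d-minimal AG exact dg category}, where $\P_{\A},\I_{\A}\subseteq\A$ are the full dg subcategories of projectives and projective-injectives and $\K_{\A}\subseteq\D^b_{dg}(\A)$ is the full dg subcategory of $(d+1)$-st syzygies of objects of $\A$; fixing once and for all a functorial model of $\D^b_{dg}(-)$ on cofibrant exact dg categories (via \cite{Chen23} and a functorial $\pretr(-)$), an exact quasi-equivalence $\overline{\A}\to\overline{\A'}$ induces a quasi-equivalence $\D^b_{dg}(\A)\to\D^b_{dg}(\A')$ carrying $(d+1)$-st syzygies to $(d+1)$-st syzygies, hence a morphism of triples.

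For the natural transformation $\Id_{\G}\Rightarrow L\circ R\circ Q$ I would use Lemma~\ref{lem:d-minimal AG exact dg category}(e): the universal embedding $\A\to\D^b_{dg}(\A)$ identifies $\overline{\A}$ with $\tau_{\leq 0}\B^{(d+1)}_{\K_{\A},\P_{\A},\I_{\A}}=L(R(Q(\A)))$, and once $\D^b_{dg}(-)$ has been made functorial this embedding is natural in $\A$, so these quasi-equivalences assemble into the required natural transformation. For $R\circ Q\circ L\Rightarrow\Id_{\F}$, write $\B'=\tau_{\leq 0}\B^{(d+1)}_{\K,\P,\I}=L(\K,\P,\I)$: Proposition~\ref{prop:K} identifies $\K_{\B'}$ with the closure of $\K$ under kernels of retractions inside $\pretr(\K)\iso\D^b_{dg}(\B')$, while Proposition~\ref{prop:preclustertilting} identifies $\P_{\B'},\I_{\B'}$ with $\Q_{\K,\P,\I},\J_{\K,\P,\I}$, whose closures under kernels of retractions are $\overline{\P}$ and $\overline{\I}$. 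Hence the quasi-equivalence of Lemma~\ref{lem:pretri} restricts to a quasi-equivalence $\overline{\K_{\B'}}\iso\overline{\K}$ carrying $\overline{\P_{\B'}}$ to $\overline{\P}$ and $\overline{\I_{\B'}}$ to $\overline{\I}$, i.e.~a morphism $R(Q(L(\K,\P,\I)))\to(\K,\P,\I)$ in $\F$, and this is natural in $(\K,\P,\I)$ by the naturality of the construction in Lemma~\ref{lem:pretri}.

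The real obstacle, as the author points out, is not this formal skeleton but the rigidification it presupposes: the constructions $\D^b_{dg}(-)$, $\pretr(-)$, and the comparison quasi-equivalences of Lemmas~\ref{lem:pretri} and~\ref{lem:d-minimal AG exact dg category}(e) and of Proposition~\ref{prop:K} are a priori defined only up to non-canonical quasi-equivalence, so a priori none of $R$, the composite $L\circ R\circ Q$, or the two comparison maps above is functorial or natural on the nose. The heart of the argument is therefore the careful choice --- after passing to $\tilde{\F}$ and $\tilde{\G}$ --- of strictly functorial models of $\D^b_{dg}(-)$ and $\pretr(-)$, and the verification that, with these fixed, the universal embeddings $\A\to\D^b_{dg}(\A)$ and the identifications supplied by Lemma~\ref{lem:pretri} and Proposition~\ref{prop:K} are genuinely natural in the relevant variable. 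Once this bookkeeping is in place, the two previous paragraphs produce the required natural transformations, and the localization principle of the first paragraph yields the equivalence of $\infty$-groupoids $\F'\iso\G'$.
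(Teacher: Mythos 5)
Your skeleton is the one the paper announces, and your identification of the crux --- the non-canonicity of $\pretr(-)$, $\D^b_{dg}(-)$ and the comparison quasi-equivalences --- is exactly right. But the resolution you offer for that crux is where the argument breaks. You claim direct natural transformations $R\circ Q\circ L\Rightarrow\Id_{\F}$ and $\Id_{\G}\Rightarrow L\circ R\circ Q$, to be obtained by ``fixing strictly functorial models.'' No choice of functorial model fixes the real obstruction, which is one of \emph{direction}: the component of $\Id_{\G}\Rightarrow L\circ R\circ Q$ at $\A$ would have to be an honest dg functor $\overline{\A}\to\tau_{\leq 0}\B^{(d+1)}_{\overline{\K_{\overline{\A}}},\overline{\P_{\overline{\A}}},\overline{\I_{\overline{\A}}}}$, whose target lives inside $\pretr(\overline{\K_{\overline{\A}}})$, whereas the universal embedding lands in $\D^b_{dg}(\overline{\A})$; the identification of Lemma~\ref{lem:d-minimal AG exact dg category}~(e) is mediated by the quasi-equivalence $\pretr(\overline{\K_{\overline{\A}}})\to\D^b_{dg}(\overline{\A})$, which points the wrong way and admits no strict dg inverse (only an inverse in $\Hqe$, while morphisms in $\G$ are required to be actual dg functors). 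The same problem occurs on the other side: the comparison of Lemma~\ref{lem:pretri} gives a dg functor $\D^b_{dg}(\overline{Q(\B')})\to(\text{a Drinfeld quotient of }\pretr(\overline{\K}))$, not into $\pretr(\overline{\K})$ itself, so one does not obtain a morphism $R(Q(L(\K,\P,\I)))\to(\K,\P,\I)$ in $\F$ on the nose.

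You do remark that a zig-zag of natural transformations would suffice, and that is indeed the correct principle --- but you never construct the zig-zag; your second and third paragraphs construct (or rather assert) the direct transformations, which do not exist. The paper's proof is exactly the construction of such a zig-zag: it introduces auxiliary functors $H\colon\tilde{\G}\to\G$ (sending $\A$ to $\tau_{\leq 0}\overline{\D'}$ for $\D'$ the quasi-essential image of $\overline{\A}$ in $\D^b_{dg}(\overline{\A})$) and $I\colon\tilde{\F}\to\F$ (sending $\K$ to its image in the Drinfeld quotient of $\pretr(\overline{\K})$ by the contractible objects), together with natural transformations $\inc\to H$, $\inc\to I$, $L\circ R\to H$ and $R\circ Q\circ L\circ\inc\to I$, all of whose components are quasi-equivalences and all of which point in directions that are actually realizable by strict dg functors. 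Only after $\infty$-localization does one conclude $R\circ Q\circ L\iso\Id_{\F'}$ and $L\circ R\circ Q\iso\Id_{\G'}$. This construction of $H$ and $I$ and of the four natural transformations is the substance of the proof, and it is the part missing from your proposal.
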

\begin{proof}
Let $L(\K,\P,\I)$ be the quasi-essential image of $\tau_{\leq 0}\B_{\K,\P,\I}^{(d+1)}$ under the quasi-equivalence $\pretr(\K)\iso \pretr(\overline{\K})$.
Equivalently, it is the dg subcategory $\tau_{\leq 0}\B_{\overline\K,\overline\P,\overline\I}^{(d+1)}$ of $\pretr(\overline{\K})$.
Clearly, the map $(\K,\P,\I)\mapsto \tau_{\leq 0}\B_{\overline\K,\overline\P,\overline\I}^{(d+1)}$ extends to a functor $L: \F\rightarrow \G$. 

Similarly, the map $\A\mapsto (\K_{\overline\A},\P_{\overline\A},\I_{\overline\A})$ extends to a functor $R: \tilde{\G}\rightarrow \F$.
Indeed, let $\overline{\A}\rightarrow\overline{\A'}$ be a quasi-equivalence, which gives rise to a morphism $\A\rightarrow \A'$ in $\tilde{\G}$.
By construction, the bounded dg derived category $\D^b_{dg}(\overline\A)$ is the dg quotient $\pretr(\overline\A)/\N$, 
where $\N$ is the full dg subcategory of $\pretr(\overline\A)$ consisting of the objects in the triangulated subcategory of $\tr(\overline\A)$ generated by the totalizations of conflations in $\overline\A$.
Since both $\A$ and $\A'$ are cofibrant and by Drinfeld's construction of dg quotients \cite{Drinfeld04}, we have a canonical dg functor $\overline\A\rightarrow \D^b_{dg}(\overline\A)$, and any exact dg functor $\overline\A\rightarrow \overline\A'$ induces a canonical dg functor $\D^b_{dg}(\overline\A)\rightarrow \D^b_{dg}(\overline\A')$.
So the quasi-equivalence $\overline{\A}\rightarrow \overline{\A'}$ induces a quasi-equivalence $\K_{\overline\A}\rightarrow \K_{\overline\A'}$ which yields a morphism $(\K_{\overline\A},\P_{\overline\A},\I_{\overline\A})\rightarrow (\K_{\overline\A'},\P_{\overline\A'},\I_{\overline\A'})$ in $\F$. 

Let $\A$ be a (strictly) connective exact dg category whose underlying dg category is cofibrant.
Let $\D'$ be the full dg subcategory of $\D^b_{dg}(\overline\A)$ consisting of objects in the quasi-essential image of the dg functor $\overline\A\rightarrow \D^b_{dg}(\overline\A)$.
Then we have the following diagram
\[
\begin{tikzcd}
&&&\tau_{\leq 0}\overline{\D'}\ar[d]&\\
&\tau_{\leq 0}\D'\ar[rru,hook]\ar[r]&\D'\ar[d,hook]\ar[r,hook]&\overline{\D'}\ar[r,hook]&\pretr(\D')\\
\overline\A\ar[r,hook]\ar[ru,"\sim"]&\pretr(\overline\A)\ar[r]&\D^b_{dg}(\overline\A)&
\end{tikzcd}
\]
The map $\A\mapsto \tau_{\leq 0}\overline{\D'}$ extends to a functor $H: \tilde{\G}\rightarrow \G$.
We have a natural transformation $\inc\rightarrow H$ whose value at the object $\A$ is induced by the canonical dg functor $\overline\A\rightarrow \tau_{\leq 0}\overline{\D'}$.

Let $(\K,\P,\I)$ be a $d$-precluster tilting triple with $\K$ being cofibrant.
Let $\V$ be the Drinfeld dg quotient of $\pretr(\overline \K)$ by all the contractible objects. 
Let $\K'$ the full dg subcategory of $\V$ consisting of the objects in $\K$.
Then we have the following commutative diagram in $\dgcat$:
\[
\begin{tikzcd}
\K\ar[r,hook]\ar[d,"\sim"swap]&\overline{\K}\ar[r,hook]&\pretr(\overline\K)\ar[d,"\sim"]\\
\K'\ar[rr,hook]&&\V
\end{tikzcd}
\]
The map $\K\mapsto \K'$ extends to a functor $I: \tilde{\F}\rightarrow \F$ and we have a natural transformation $\inc\rightarrow I$ whose value at the object $(\K,\P,\I)$ is induced by the canonical quasi-equivalence $\K\rightarrow \K'$. 

We construct a natural transformation $R \circ Q\circ L\circ \inc \rightarrow I$ as follows: consider a $d$-precluster tilting triple $(\K,\P,\I)$ with $\K$ being cofibrant. Put $\B=\B_{\overline\K,\overline\P,\overline\I}^{(d+1)}$ and $\B'=\tau_{\leq 0}\B$. 
The dg functor $Q(\B')\rightarrow \B'\rightarrow \pretr(\overline\K)$ induces a canonical dg functor  $\pretr(Q(\B'))\rightarrow \pretr(\overline\K)$.
When restricted to the full dg subcategory $\overline{Q(\B')}$, it extends to a canonical dg functor $\pretr(\overline{Q(\B')})\rightarrow \pretr(\overline{\K})$.
By the construction of $\V$, we obtain a canonical dg functor $\D^b_{dg}(\overline{Q(\B')})\rightarrow \V$ extending the dg functor $\pretr(\overline{Q(\B')})\rightarrow \pretr(\overline\K)\rightarrow \V$.
By Lemma~\ref{lem:pretri}, it is a quasi-equivalence.
Consequently, we have the following commutative diagram in $\dgcat$
\[
\begin{tikzcd}
&Q(\B')\ar[r]\ar[d]&\B'=\tau_{\leq 0}\B\ar[d]\\
\K_{\overline{Q(\B')}}\ar[r,hook]\ar[dd,bend right=6ex,dashed]\ar[rd,"\sim"swap]&\D^b_{dg}(\overline{Q(\B')})\ar[rdd,dashed]&\B\ar[d,hook]\\
\K\ar[r,hook]\ar[d]&\overline{\K}\ar[ru,hook]&\pretr(\overline\K)\ar[d,"\sim"]\\
\K'\ar[rr,hook]&&\V
\end{tikzcd}.
\]
The quasi-equivalence $\K_{\overline{\Q(\B')}}\rightarrow \K' $ yields the desired morphism $R\circ Q\circ L\circ \inc(\K,\P,\I)\rightarrow I(\K,\P,\I)$ in $\F$.

We also establish a natural transformation $L \circ R \rightarrow H$ as follows: consider an object $\A$ in $\tilde{\G}$.
Put $\B=\B_{\overline{\K_{\overline\A}},\overline{\P_{\overline\A}},\overline{\I_{\overline\A}}}^{(d+1)}$ and $\B'=\tau_{\leq 0}\B$.
 Then, we have the following diagram
\[
\begin{tikzcd}
&&&\B'\ar[d]\ar[lld,bend right=6ex,dashed]&\\
&{\tau_{\leq 0}{\overline{\D'}}}\ar[rrd]&\K_{\overline\A}\ar[d,hook]\ar[r]&\B\ar[r]\ar[d,dashed]&\pretr(\overline{\K_{\overline\A}})\ar[d,dashed]\\
&\tau_{\leq 0}\D'\ar[r]\ar[u,hook]&\D'\ar[d,hook]\ar[r,hook]&\overline{\D'}\ar[r,hook]&\pretr(\D')\\
\overline\A\ar[r,hook]\ar[ru,"\sim"]&\pretr(\overline\A)\ar[r]&\D^b_{dg}(\overline\A)&&
\end{tikzcd}
\]
On the object $\A$, the natural transformation $L \circ R \rightarrow H$ is induced by the quasi-equivalence $\B'\rightarrow \tau_{\leq 0}\overline{\D'}$.

Summarizing the above, we have the following diagram of functors
\begin{equation}
\begin{tikzcd}\label{dia:func}
\tilde{\F}\ar[rr,"\inc"{description},shift left=2ex]\ar[rr,"I"{description}]&&\F\ar[ll,shift right=4ex,"Q"{description}]\ar[d,"L"{description}]\\
\tilde{\G}\ar[rr,"\inc"{description},shift right=2ex]\ar[rr,"H"{description},shift right =4ex]\ar[rru,"R"{description}]&&\G\ar[ll,"Q"{description}]
\end{tikzcd}
\end{equation}
where we have natural transformations $R\circ Q\circ L\circ \inc\rightarrow I$ and $L\circ R\rightarrow H$ which are objectwise quasi-equivalences.
By the universal property of $\infty$-localizations, we have a diagram of $\infty$-groupoids corresponding to the diagram (\ref{dia:func}) and we keep the notations for the $\infty$-functors between the $\infty$-groupoids. Then we have 
\[
R\circ Q\circ L\iso R\circ Q\circ L\circ \inc\circ Q\iso I\circ Q\iso \inc \circ Q\iso \Id_{\F'}
\]
and 
\[
L\circ R\circ Q\iso H\circ Q\iso \inc\circ Q\iso \Id_{\G'}.
\]
Hence the functor $L:\F\rightarrow \G$ induces an $\infty$-equivalence of $\infty$-groupoids $\F'\rightarrow \G'$.
\end{proof}

\begin{remark}
It is clear that the full subcategory of $\F$ on the objects of the form $(\K,0,0)$ (resp.~$(\P,\P,\P)$, $(\P,\P,0)$) constitutes a connected component of the category $\F$.
By~\cite[Corollary 8.4]{Toen07}, the homotopy groups are given as follows 
\[
\pi_{i}(\F', (\K,0,0))\iso HH^{2-i}(\overline{\K}), \;\;\; \forall i>2,
\] 
\[
\pi_{2}(\F', (\K,0,0))\iso HH^{0}(\overline{\K})^{*};
\]
\[
\pi_{i}(\F', (\P,\P,\P))\iso HH^{2-i}(\overline{\P}), \;\;\; \forall i>2,
\] 
\[
\pi_{2}(\F', (\P,\P,\P))\iso HH^{0}(\overline{\P})^{*};
\]
\[
\pi_{i}(\F', (\P,\P,0))\iso HH^{2-i}(\overline{\P}), \;\;\; \forall i>2,
\] 
\[
\pi_{2}(\F', (\P,\P,0))\iso HH^{0}(\overline{\P})^{*},
\]
 where for a small dg category $\A$, $HH^0(\A)^*$ denotes the automorphism group of the dg $\A$-$\A$-bimodule $\A$ in $\D(\A^{e})$.
 It would be interesting to compute the homotopy groups $\pi_i(\F', (\K,\P,\I))$ for general $d$-precluster tilting triple $(\K,\P,\I)$.
\end{remark}

\section{Connection to exact categories}
In this section we give necessary and sufficient conditions on a $d$-precluster tilting triple $(\K,\P,\I)$ for the connective $d$-minimal Aulander--Gorenstein exact dg category $\tau_{\leq 0}\B_{\K,\P,\I}^{(d+1)}$ to be concentrated in degree zero (resp.~to be an abelian category).
In the abelian case, we will see that $\K$ and $\I$ are determined by $\P$ and that there is a fully faithful functor $\P\rightarrow (\I\mbox{-}\mod)^{op}$.
Assume $k$ is a commutative artinian ring.
 When $\tau_{\leq 0}\B_{\K,\P,\I}^{(d+1)}$ is the module category over an Artin $k$-algebra $\Gamma$, we may identify $\I$ with the full subcategory of injective modules over some Artin $k$-algebra $\Lambda$ and identify $\P$ with an additive full subcategory of $\mod\Lambda$.
 Suppose that $\P=\add M$ for some $\Lambda$-module $M$. 
We show that the thus defined triple $(\K,\P,\I)$ is $d$-precluster tilting if and only if $M$ is a $d$-precluster tilting $\Lambda$-module.
This justifies our terminology to some extent.
\subsection{Exact case}
\begin{proposition}\label{prop:Quillenexact}
Let $(\K,\P,\I)$ be a $d$-precluster tilting triple. 
The connective exact dg category $\tau_{\leq 0}\B_{\K,\P,\I}^{(d+1)}$ is concentrated in degree zero if and only if the following conditions hold
\begin{itemize}
\item[(a)] The dg category $\K$ is concentrated in non-negative degrees (in particular, the dg category $\P$ is concentrated in degree zero).  
\item[(b)] Consider a complex in $H^0(\K)$
\begin{equation}\label{cplx:HK}
\begin{tikzcd}
0\ar[r]&K_{d+1}\ar[r]& P_d\ar[r]&P_{d-1}\ar[r]&\ldots\ar[r]&P_1\ar[r]&P_0
\end{tikzcd}
\end{equation}
where $K_{d+1}\in H^0(\K)$ and $P_i\in H^0(\P)$ for $0\leq i\leq d$.
If the induced complex of Hom spaces in $H^0(\K)$
\begin{equation}\label{cplx:hom}
\begin{tikzcd}
(P_0,I)\ar[r]&(P_1,I)\ar[r]&\ldots\ar[r]&(P_{d-1},I)\ar[r]&(P_d,I)\ar[r]&(K_{d+1},I)\ar[r]&0
\end{tikzcd}
\end{equation}
is exact for each $I\in\I$, then the corresponding complex of Hom spaces in $H^0(\K)$
\begin{equation}\label{cplx:K}
\begin{tikzcd}
0\ar[r]&(P,K_{d+1})\ar[r]& (P,P_d)\ar[r]&(P,P_{d-1})\ar[r]&\ldots\ar[r]&(P,P_1)\ar[r]&(P,P_0)
\end{tikzcd}
\end{equation}
is exact for each $P\in\P$.
\end{itemize}
\end{proposition}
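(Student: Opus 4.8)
The plan is to analyze the dg category $\B'=\tau_{\leq 0}\B_{\K,\P,\I}^{(d+1)}$ degree by degree and reduce the vanishing of its positive cohomology to a purely $H^0$-level statement. Recall from Proposition~\ref{prop:preclustertilting} that $\B_{\K,\P,\I}^{(d+1)}=\P\ast\Sigma\P\ast\cdots\ast\Sigma^d\P\ast\Sigma^{d+1}\K$; so a typical object of $\B'$ is built, as a twisted complex, from objects $P_0,\dots,P_d$ of $\P$ together with an object $K_{d+1}$ of $\K$ sitting in the highest filtration piece. The total complex of such an object, viewed inside $\pretr(\K)$, is (up to the $\tau_{\leq 0}$-truncation) the totalization of a complex of the shape (\ref{cplx:HK}); its cohomology in degree $n$ is governed by the internal gradings of the $\P$- and $\K$-components together with the differential coming from the maps $P_i\to P_{i-1}$ and $K_{d+1}\to P_d$. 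The first point is that $\B'$ is concentrated in degree zero iff every such object has $H^{\neq 0}\B'(X,Y)=0$; splitting off the idempotent-completion issues via Remark~\ref{rmk:wic}, this is equivalent to the analogous statement for the objects of $\P$, $\K$ and their iterated cones.

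First I would prove the "only if" direction, which should be the easier one. If $\B'$ is concentrated in degree zero, then in particular $\P\subseteq \B'$ forces $\P$ to be concentrated in degree zero, and since $\K\subseteq \P\ast\Sigma\K$ (property~(5)) together with $\Hom_{\tr(\K)}(\P,\Sigma^{\geq 1}\K)=0$ and $\Hom_{\tr(\K)}(\K,\Sigma^{\geq 1}\P)=0$ (property~(4)), an inductive unwinding shows $\K$ is concentrated in non-negative degrees; this gives (a). For (b), given a complex (\ref{cplx:HK}) whose Hom-into-$\I$ complex (\ref{cplx:hom}) is exact, the exactness of (\ref{cplx:hom}) is exactly the condition needed to splice the object $X$ assembled from $P_0,\dots,P_d,K_{d+1}$ into $\B'$ (it is what makes $X$ lie in $\ker\Ext^{\geq 1}(-,\I)$, after the $\I$-coresolution discussion of Lemma~\ref{lem:d-minimal AG exact dg category}(b)); then $\B'(P,X)$ for $P\in\P$ computes the complex (\ref{cplx:K}), and since $\P$ consists of projectives of $\B'$ which is concentrated in degree zero, $\B'(P,X)$ is concentrated in degree zero as well, forcing (\ref{cplx:K}) to be exact. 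The main subtlety here is making precise the passage from "complexes (\ref{cplx:HK}) with (\ref{cplx:hom}) exact" to "actual objects of $\B'$", which is where the definition of $\B_{\K,\P,\I}^{(d+1)}$ via $\ker\Ext^{\geq 1}(-,\I)$ and the Horseshoe-Lemma bookkeeping of Lemma~\ref{lem:d-minimal AG exact dg category} are used.

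For the "if" direction I would argue by induction on the filtration length. Suppose (a) and (b) hold. By (a), $\P$ is concentrated in degree zero, so objects built purely from $\P$ behave like ordinary complexes. Given $X\in\B'$ with its twisted-complex decomposition through $\P\ast\Sigma\P\ast\cdots\ast\Sigma^{d}\P\ast\Sigma^{d+1}\K$, I would compute $\B'(X,Y)$ and $\B'(Y,X)$ by a spectral-sequence / successive-cone argument, reducing to Hom-complexes of the form $\Hom_{\tr(\K)}(P_i,-)$ and $\Hom_{\tr(\K)}(K_{d+1},-)$; properties (3)--(5) control the cross-terms between $\P$ and $\K$, and condition (b) is precisely what is needed to see that the relevant Hom-complexes have no higher cohomology — the point being that membership of $X$ in $\B'$ forces its associated complex (\ref{cplx:hom}) to be exact for every $I\in\I$, whence by (b) the complex (\ref{cplx:K}) is exact, which kills the potential contributions to $H^{>0}\B'(P,X)$, and dually (using that the $P_i$ are projective and that $\K$ sits in non-negative degrees) to $H^{<0}\B'(P,X)$; finally one bootstraps from $P\in\P$ to arbitrary objects of $\B'$ using that $\B'$ has enough projectives. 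The hard part will be organizing this induction cleanly: keeping track of which differentials in the twisted complex contribute to which cohomological degree, and ensuring that the hypothesis (b) — stated only for the specific shape (\ref{cplx:HK}) — suffices to handle all the Hom-complexes that arise, rather than needing a stronger exactness input. I expect one handles this by noting that every object of $\B'$ is, up to summands and up to the kernels-of-retractions closure, of the form assembled from a single complex (\ref{cplx:HK}), so that (b) in its stated generality is exactly what is needed.
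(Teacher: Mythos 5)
Your overall strategy coincides with the paper's: objects of $\B'=\tau_{\leq 0}\B_{\K,\P,\I}^{(d+1)}$ are analyzed through their filtrations in $\P\ast\Sigma\P\ast\cdots\ast\Sigma^{d}\P\ast\Sigma^{d+1}\K$; in the ``if'' direction one first uses (a) to see that the complex (\ref{cplx:hom}) attached to the syzygy filtration of any $X\in\B'$ is exact, then (b) plus projectivity of $\P$ to get $\Hom_{\tr(\K)}(P,\Sigma^{-i}X_j)=0$, and finally dimension-shifts along the syzygy triangles of a second object $X'$ down to $\Hom(K'_{d+1},\Sigma^{-1}K_{d+1})=0$; in the ``only if'' direction, (b) is read off by applying $\Hom(P,-)$ to the conflations encoded by a complex (\ref{cplx:HK}) with (\ref{cplx:hom}) exact and pasting the resulting short exact sequences. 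This is exactly how the paper argues, and your worry about whether hypothesis (b) covers all Hom-complexes that arise is correctly resolved by the identity $\B_{\K,\P,\I}^{(d+1)}=\P\ast\Sigma\P\ast\cdots\ast\Sigma^{d}\P\ast\Sigma^{d+1}\K$ from Proposition~\ref{prop:preclustertilting}.

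One sub-step of your sketch would fail as written: in the ``only if'' direction you propose to deduce that $\K$ is concentrated in non-negative degrees by ``inductive unwinding'' of $\K\subseteq\P\ast\Sigma\K$ together with property (4). But property (4) only controls $\Hom_{\tr(\K)}(-,\Sigma^{\geq 1}-)$, i.e.\ positive shifts, whereas what must be killed is $\Hom_{\tr(\K)}(K_1,\Sigma^{-i}K_2)$ for $i\geq 1$. The triangle $K_2'\to P\to K_2\to\Sigma K_2'$ only trades this group against $\Hom(K_1,\Sigma^{-i}P)$ and $\Hom(K_1,\Sigma^{1-i}K_2')$ (the latter being $\Hom(K_1,K_2')$ when $i=1$), neither of which is controlled by (4), so the induction does not close. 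The correct and much shorter argument is that $\Ext^{\geq 1}(\K,\I)=0$ by property (4) (as $\I\subseteq\P$), so $\K$ lies in $\B_{\K,\P,\I}^{(d+1)}$ and $\tau_{\leq 0}\K$ is a full dg subcategory of $\B'$; if $\B'$ is concentrated in degree zero, this forces $H^{<0}\K=0$ directly. With that repair the proposal matches the paper's proof.
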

\begin{proof}
Let $(\K,\P,\I)$ be a $d$-precluster tilting triple satisfying conditions (a) and (b). Put $\B=\B_{\K,\P,\I}^{(d+1)}$ and $\B'=\tau_{\leq 0}\B$.
Let $X$ be an object in $\B$.
We have conflations  in $H^0(\B')$
\begin{equation}\label{conf:X0}
\begin{tikzcd}[row sep=small]
X_1\ar[r]& P_0\ar[r]& X=X_0\ar[r,dashed,"\delta_0"]&;\\
X_2\ar[r]& P_1\ar[r]& X_1\ar[r,dashed,"\delta_1"]&;\\
&\ldots&&\\
X_{d+1}=K_{d+1}\ar[r]& P_d\ar[r]& X_d\ar[r,dashed,"\delta_d"]&,
\end{tikzcd}
\end{equation}
 where $K_{d+1}\in\K$ and $P_i\in\P$ for $0\leq i\leq d$. 
 By condition (a), we have $\Hom_{\tr(\K)}(\K,\Sigma^{-i}\P)=0$ for $i\geq 1$ and therefore $\Hom_{\tr(\K)}(X_j,\Sigma^{-i}\P)=0$ for $i\geq 1$ and $0\leq j\leq d$. 
 It follows that the complex (\ref{cplx:hom}) is exact for each $I\in\I$ and hence by condition (b), the complex (\ref{cplx:K}) is exact for $P\in\P$.
 Since objects in $\P$ are projective in $H^0(\B')$, the maps $P_i\rightarrow X_i$ induce surjections $\Hom(P,P_i)\rightarrow \Hom(P,X_i)$ for $P\in\P$ and $0\leq i\leq d$.
 Therefore the maps $\Hom(P,X_{i+1})\rightarrow \Hom(P,P_i)$ are injections for $0\leq i\leq d$ and $P\in\P$.
 It follows that we have $\Hom(P,\Sigma^{-i}X_j)=0$ for $P\in\P$ and $i\geq 1$ and $0\leq j\leq d+1$.
 Let $X'$ be another object in $\B'$ with conflations given by adding a prime symbol to each object in the conflations (\ref{conf:X0}).
 Then we have 
 \[
 \Hom(X',\Sigma^{-1}X)\iso \Hom(X_1',\Sigma^{-2}X)\iso\ldots\iso\Hom(K_{d+1},\Sigma^{-d-2}X)
 \]
and 
\[
\Hom(K,\Sigma^{-d-2}X)\iso\Hom(K,\Sigma^{-d-1}X_1)\iso\ldots\iso\Hom(K,\Sigma^{-1}K_{d+1})=0
\]
for $K\in\K$.
Similarly we have $\Hom(X',\Sigma^{-i}X)=0$ for $i\geq 1$.
Therefore the connective exact dg category $\B'$ is concentrated in degree zero.

Conversely, we suppose that $\B'$ is concentrated in degree zero. 
Since $\tau_{\leq 0}\K$ is a full dg subcategory of $\B'$, we have that $\tau_{\leq 0}\K$ is concentrated in degree zero and hence $\K$ is concentrated in non-negative degrees. 
This shows condition (a).

Let us show condition (b). 
Since $\B'$ is concentrated in degree zero, a complex (\ref{cplx:HK}) in $H^0(\K)$ such that the corresponding complex (\ref{cplx:hom}) is exact for each $I\in\I$, where $K_{d+1}\in H^0(\K)$ and $P_i\in H^0(\P)$ for $0\leq i\leq d$, is equivalent to a sequence of conflations (\ref{conf:X0}) in $H^0(\B')$.
Let $P$ be an object in $\P$. If we apply the functor $\Hom(P,-)$ to the sequences in (\ref{conf:X0}), they become short exact sequences of Hom spaces in $H^0(\K)$.
Now the exactness of the complex (\ref{cplx:K}) follows by pasting the above short exact sequences.
\end{proof}
\begin{lemma}\label{lem:Quillenexact}
Let $(\K,\P,\I)$ be a $d$-precluster tilting triple which satisfies 
the conditions (a) and (b) in Proposition~\ref{prop:Quillenexact}. 
Then the map sending $X\in \B'=\tau_{\leq 0}\B_{\K,\P,\I}^{(d+1)}$ to $\Hom_{\B'}(-,X)|_{\P}$ 
extends to a fully faithful exact functor $F:\B'\hookrightarrow \Mod\P$.
It induces a bijection 
\[
\Ext^1_{\B'}(X,X')\iso \Ext^1_{\Mod\P}(FX,FX')
\]
 for $X$ and $X'$ in $\B'$.
\end{lemma}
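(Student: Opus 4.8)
The plan is to realize $F$ as the restricted Yoneda functor and then verify the two assertions (full faithfulness, and the bijection on $\Ext^1$) by using projective resolutions by objects of $\P$. First I would observe that since $\P$ consists of projective objects of $H^0(\B')$ and $\B'$ has enough projectives (Proposition~\ref{prop:preclustertilting}(a), since $\Q_{\K,\P,\I}=\overline{\P}$ and we have passed to $\tau_{\leq 0}$), every $X\in\B'$ admits a conflation $X_1\rightarrow P_0\rightarrow X$ with $P_0\in\P$; iterating gives the sequence of conflations (\ref{conf:X0}) already used in the proof of Proposition~\ref{prop:Quillenexact}. Restricting $\Hom_{\B'}(-,X)$ to $\P$ is functorial in $X$ and $k$-linear, so $F$ is well-defined; it is exact because a conflation in $\B'$ stays a short exact sequence of Hom-spaces after applying $\Hom_{\B'}(P,-)$ for each $P\in\P$, precisely because such $P$ is projective.

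For full faithfulness I would argue as follows. Given $X,X'\in\B'$, pick a two-term projective presentation $P_1\xrightarrow{f} P_0\rightarrow X$ coming from the first two conflations in (\ref{conf:X0}) (so $P_1\to P_0$ is the composite $P_1\twoheadrightarrow X_1\hookrightarrow P_0$). Applying $\Hom_{\B'}(-,X')$ and using that $\P$-objects are projective, one gets that $\Hom_{\B'}(X,X')$ is the kernel of $\Hom_{\B'}(P_0,X')\to\Hom_{\B'}(P_1,X')$. On the module side, $FX$ has presentation $FP_1\to FP_0\to FX\to 0$ in $\Mod\P$ (here $FP=\Hom_{\B'}(-,P)|_{\P}=P^{\wedge}$ is the representable $\P$-module), and $\Hom_{\Mod\P}(FX,FX')$ is likewise the kernel of $\Hom_{\Mod\P}(P_0^{\wedge},FX')\to\Hom_{\Mod\P}(P_1^{\wedge},FX')$. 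By Yoneda, $\Hom_{\Mod\P}(P^{\wedge},FX')\cong FX'(P)=\Hom_{\B'}(P,X')$ naturally, and the two maps are identified. Hence $F$ induces an isomorphism on Hom-spaces.

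For the $\Ext^1$-statement I would compute both sides from the same projective presentation. In $\B'$, the conflation $X_1\rightarrow P_0\rightarrow X$ yields an exact sequence $\Hom_{\B'}(P_0,X')\to\Hom_{\B'}(X_1,X')\to\Ext^1_{\B'}(X,X')\to\Ext^1_{\B'}(P_0,X')=0$, so $\Ext^1_{\B'}(X,X')$ is the cokernel of $\Hom_{\B'}(P_0,X')\to\Hom_{\B'}(X_1,X')$; and since $X_1$ itself has $P_1\twoheadrightarrow X_1$, one rewrites this as the cokernel of $\Hom_{\B'}(P_0,X')\to\Hom_{\B'}(P_1,X')$ computed via the presentation $P_1\to P_0\to X$ — more precisely as the first cohomology of $\Hom_{\B'}(P_\bullet, X')$. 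On the other side, $P_\bullet^{\wedge}:\;\cdots\to P_1^{\wedge}\to P_0^{\wedge}\to FX\to 0$ is a projective resolution of $FX$ in $\Mod\P$ (representables are projective in $\Mod\P$, and exactness at $P_0^{\wedge}, P_1^{\wedge}$ follows from exactness of $F$ together with the corresponding conflations in $\B'$, extended one more step using the conflation for $X_2$). Therefore $\Ext^1_{\Mod\P}(FX,FX')$ is the first cohomology of $\Hom_{\Mod\P}(P_\bullet^{\wedge},FX')$, which via Yoneda is identified term-by-term and differential-by-differential with $\Hom_{\B'}(P_\bullet,X')$. Comparing cohomology at position $1$ gives the desired bijection $\Ext^1_{\B'}(X,X')\iso\Ext^1_{\Mod\P}(FX,FX')$.

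The main obstacle I anticipate is not the Yoneda bookkeeping but checking that $P_\bullet^{\wedge}$ is genuinely exact (i.e.\ a resolution) in $\Mod\P$ rather than merely in $\B'$ — that is, that $F$ takes the conflation sequences to exact sequences of functors on $\P$, including exactness at the leftmost terms where one needs the next syzygy $X_2$ (and that the chain (\ref{conf:X0}) terminates with $K_{d+1}\in\K$ having $\Ext^{\geq1}(\P,K_{d+1})=0$, so no obstruction appears). This is exactly where condition (a) of Proposition~\ref{prop:Quillenexact} is used: it guarantees $\Hom_{\tr(\K)}(\P,\Sigma^{-i}\K)=0$ for $i\geq 1$, so applying $\Hom_{\B'}(P,-)$ to each conflation in (\ref{conf:X0}) really does give short exact sequences. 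Condition (b) is not needed for this lemma beyond ensuring, via Proposition~\ref{prop:Quillenexact}, that $\B'$ is an honest exact ($1$-)category so that $\Ext^1_{\B'}$ has its usual Yoneda description; I would state this reliance explicitly at the start of the proof.
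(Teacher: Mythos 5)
Your proposal is correct and follows essentially the same route as the paper: $F$ is the restricted Yoneda functor, exactness comes from projectivity of the objects of $\P$ in $\B'$, full faithfulness is obtained by comparing the left-exact sequences induced by the projective presentation $P_1\to P_0\to X$ on both sides via the Yoneda isomorphism $\Hom_{\Mod\P}(P^{\wedge},FX')\cong\Hom_{\B'}(P,X')$, and the $\Ext^1$-bijection follows by computing with the same projective resolution in $\B'$ and in $\Mod\P$. Your extra care in checking that $P_\bullet^{\wedge}$ is genuinely a resolution in $\Mod\P$ (using the syzygy $X_2$ and the vanishing of negative-degree Homs guaranteed by condition (a)) is exactly the point the paper's four-term comparison diagram is implicitly handling.
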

\begin{proof}
By Proposition~\ref{prop:Quillenexact}, the exact dg category $\B'$ is concentrated in degree zero and hence is an exact category.
Since the objects in $\P$ are projective in $\B'$, the functor $F$ is an exact functor.

Let $X$ be an object in $\B'$ with conflations (\ref{conf:X0}).
Recall that for an object $P\in\P$, we write $P^{\wedge}$ for the representable right $\P$-module $\Hom_{\P}(-,P)$.
The right $\P$-module $\Hom_{\B'}(-,X)|_{\P}$ is the cokernel of the morphism $P_1^{\wedge}\rightarrow P_0^{\wedge}$.
Let $X'$ be another object in $\B'$.
 Then we have the following diagram
\[
\begin{tikzcd}
0\ar[r]&\Hom_{\B'}(X,X')\ar[r]\ar[d,dashed,""swap]&\Hom_{\B'}(P_0,X')\ar[r]\ar[d,"\simeq"swap]&\Hom_{\B'}(P_1,X')\ar[d,"\simeq"]\ar[r]&\Hom_{\B'}(P_2,X')\ar[d,"\simeq"swap]\\
0\ar[r]&(FX,FX')\ar[r]&(FP_0,FX')\ar[r]&(FP_1,FX')\ar[r]&(FP_2,FX')
\end{tikzcd}.
\]
where the second row is a sequence of Hom spaces in $\Mod\P$. 
Hence we have $\Hom_{\B'}(X,X')\iso\Hom_{\Mod\P}(FX,FX')$ and the functor $F$ is fully faithful. 
Since objects in $\P$ (resp.~$F(\P)$) are projective in $\B'$ (resp.~$\Mod\P$), we have a natural bijection
\[
\Ext^1_{\B'}(X,X')\iso \Ext^1_{\Mod\P}(FX,FX').
\]
\end{proof}

\subsection{Abelian case}
\begin{proposition}\label{prop:Abelian}
Let $(\K,\P,\I)$ be a $d$-precluster tilting triple. 
The connective exact dg category $\B'=\tau_{\leq 0}\B_{\K,\P,\I}^{(d+1)}$ is abelian if and only if the following conditions hold
\begin{itemize}
\item[(a)] The dg category $\K$ is concentrated in non-negative degrees (in particular, the dg category $\P$ is concentrated in degree zero) and the category $\P$ admits weak kernels.  
\item[(b)] Consider a complex in $H^0(\K)$
\begin{equation}\label{cplx:HK2}
\begin{tikzcd}
0\ar[r]&K_{d+1}\ar[r]& P_d\ar[r]&P_{d-1}\ar[r]&\ldots\ar[r]&P_1\ar[r]&P_0
\end{tikzcd}
\end{equation}
where $K_{d+1}\in H^0(\K)$ and $P_i\in H^0(\P)$ for $0\leq i\leq d$.
 The induced complex of Hom spaces in $H^0(\K)$
\begin{equation}\label{cplx:hom2}
\begin{tikzcd}
(P_0,I)\ar[r]&(P_1,I)\ar[r]&\ldots\ar[r]&(P_{d-1},I)\ar[r]&(P_d,I)\ar[r]&(K_{d+1},I)\ar[r]&0
\end{tikzcd}
\end{equation}
is exact for each $I\in\I$, if and only if the corresponding complex of Hom spaces in $H^0(\K)$
\begin{equation}\label{cplx:K2}
\begin{tikzcd}
0\ar[r]&(P,K_{d+1})\ar[r]& (P,P_d)\ar[r]&(P,P_{d-1})\ar[r]&\ldots\ar[r]&(P,P_1)\ar[r]&(P,P_0)
\end{tikzcd}
\end{equation}
is exact for each $P\in\P$.
\item[(c)] Each morphism $P_1\rightarrow P_0$ in $H^0(\P)$ can be completed to a complex of the form (\ref{cplx:HK2}) where $K_{d+1}\in H^0(\K)$ and $P_i\in H^0(\P)$ for $0\leq i\leq d$ such that the corresponding complex (\ref{cplx:K2}) is exact for each $P\in\P$. In other words, the functor $F:\B'\hookrightarrow \mod\P$ defined in Lemma~\ref{lem:Quillenexact} is dense.

\end{itemize}
\end{proposition}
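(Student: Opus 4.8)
The strategy is uniform: under either set of hypotheses we produce, via Lemma~\ref{lem:Quillenexact}, a fully faithful exact functor $F\colon\B'\hookrightarrow\mod\P$, and we show it is an equivalence of exact categories onto $\mod\P$ with its abelian structure; abelianness of $\B'$ is then immediate. Consider first the ``only if'' direction. Since an abelian exact dg category is concentrated in degree zero, Proposition~\ref{prop:Quillenexact} applies, and it already provides the first half of condition~(a) (so $\K$ lies in non-negative degrees and $\P$ in degree zero) as well as the implication ``(\ref{cplx:hom2}) exact $\Rightarrow$ (\ref{cplx:K2}) exact'' of condition~(b); moreover Lemma~\ref{lem:Quillenexact} yields $F\colon\B'\hookrightarrow\mod\P$ fully faithful, exact and $\Ext^1$-bijective, with $F|_{\P}$ the Yoneda embedding, so $FP\cong P^{\wedge}$. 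Three things then remain. First, $\P$ has weak kernels: given $g\colon P_1\to P_0$ in $\P$, form the kernel of $g$ in the abelian category $\B'$ and choose a deflation onto it from an object $P_2$ of $\P$ (possible because the exact category $\B'$ has enough projectives and $\P$ is its subcategory of projectives); then $P_2\to P_1$ is a weak kernel of $g$. Second, $F$ is dense, which is exactly condition~(c): given $M\in\mod\P$, write $M=\cok(P_1^{\wedge}\to P_0^{\wedge})$, lift this to $g\colon P_1\to P_0$ in $\P$, and put $C=\cok(g)$ in $\B'$; exactness of $F$ carries the image factorization of $g$ to that of $g^{\wedge}$, whence $FC\cong M$, and a stepwise resolution of $C$ by projectives of $\P$ produces the complex~(\ref{cplx:HK2}) demanded in~(c), whose $(d+1)$-st syzygy lies in $\K$ by Proposition~\ref{prop:K} together with the standing convention that additive subcategories are closed under direct summands. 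Third, the remaining implication of~(b): its hypothesis on~(\ref{cplx:K2}) says precisely that $F$ applied to the complex~(\ref{cplx:HK2}) is exact in $\mod\P$, so~(\ref{cplx:HK2}), completed by the cokernel of $P_1\to P_0$, is an exact sequence in the abelian category $\B'$; applying the exact contravariant functor $\Hom_{\B'}(-,I)$ for $I\in\I$, which is injective in $\B'$ as a projective-injective, gives exactness of~(\ref{cplx:hom2}).

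For the ``if'' direction, assume (a), (b) and (c). Conditions~(a) and~(b) contain the hypotheses of Proposition~\ref{prop:Quillenexact}, so $\B'$ is concentrated in degree zero and Lemma~\ref{lem:Quillenexact} again furnishes $F\colon\B'\hookrightarrow\mod\P$ fully faithful, exact and $\Ext^1$-bijective; the second part of~(a) makes $\mod\P$ abelian, and~(c) is precisely the density of $F$, so $F$ is an equivalence of additive categories $\B'\iso\mod\P$. It then remains to see that the intrinsic exact structure on $\B'$ coincides with the abelian one transported from $\mod\P$: a sequence in $\B'$ is a conflation if and only if its $F$-image is a short exact sequence of $\mod\P$, the nonformal direction being that a short exact sequence of $\mod\P$ and a conflation representing its preimage under the $\Ext^1$-bijection have $F$-images representing the same class, hence isomorphic relative to their outer terms, so their middle objects agree by full faithfulness. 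Therefore the conflations of $\B'$ are exactly the short exact sequences of the abelian category $\mod\P\simeq\B'$, and $\B'$ is abelian.

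I expect the real content to lie in two places. The first is the identification of exact structures just sketched: an equivalence of underlying additive categories does not suffice, and it is the $\Ext^1$-bijectivity of $F$ coming from Lemma~\ref{lem:Quillenexact} that lets one reflect conflations and thereby upgrade to an equivalence of exact, hence abelian, categories. The second is the construction, in the converse direction, of the complexes required by~(c): one must peel off a projective resolution of $C=\cok(P_1\to P_0)$ by objects of $\P$ and invoke Proposition~\ref{prop:K} to be sure the $(d+1)$-st syzygy obtained genuinely belongs to $\K$, so that~(\ref{cplx:HK2}) is a complex in $H^0(\K)$ of the prescribed shape. Everything else is a routine transport of exactness statements back and forth along $F$; conditions~(a)--(c) have been arranged exactly so that exactness of the relevant complexes and Hom-complexes in $\B'$ matches exactness in $\mod\P$.
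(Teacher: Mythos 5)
Your proposal is correct and follows essentially the same route as the paper: both directions hinge on Proposition~\ref{prop:Quillenexact} and the fully faithful exact, $\Ext^1$-bijective functor $F\colon\B'\hookrightarrow\mod\P$ of Lemma~\ref{lem:Quillenexact}, with condition~(c) read as density so that $F$ becomes an equivalence with the abelian category $\mod\P$. The only cosmetic difference is that you derive the remaining implication of~(b) by transporting exactness through the already-established equivalence $F$ and applying $\Hom_{\B'}(-,I)$, whereas the paper argues directly in $\B'$ with monomorphisms and triangles in $\tr(\K)$; your added care about matching the exact structure of $\B'$ with the abelian one of $\mod\P$ is exactly what the paper's appeal to the $\Ext^1$-bijection is for.
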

\begin{proof}
Let $(\K,\P,\I)$ be a $d$-precluster tilting triple which satisfies conditions (a), (b) and (c) above.
Let us show that $\B'$ is abelian.
By Lemma~\ref{lem:Quillenexact}, we have an exact embedding $F:\B'\hookrightarrow \mod \P$ which induces a bijection between the $\Ext^1(-,-)$.
By condition (c), the functor $F$ is dense and hence $F$ is an equivalence of categories.
It follows that $\B'\iso\mod\P$ is an abelian category.

Conversely we assume that $\B'$ (with the induced exact structure from $\tr(\K)$) is an abelian category. 
Since $\B'$ has enough projectives, it follows that $\P$ has weak kernels.
This proves condition (a).
Let us prove the ``if" part of condition (b). 
For each object $Y$ in $\B'$, there is a conflation
\[
\begin{tikzcd}
Y_1\ar[r,tail]& Q_0\ar[r,two heads]&Y
\end{tikzcd}
\]
where $Q_0\in\P$. Then the morphism $Q_0\rightarrow Y$ is an epimorphism. Since the morphism $(P,K_{d+1})\rightarrowtail (P,P_d)$ is an injection for each $P\in\P$, the morphism $K_{d+1}\rightarrow P_d$ is a monomorphism in $\B'$.
Therefore we have a triangle in $\tr(\K)$
\[
\begin{tikzcd}
K_{d+1}\ar[r]& P_d\ar[r]&X_d\ar[r]&\Sigma K_{d+1}
\end{tikzcd}
\]
where $X_d\in \B'$. Since $\Hom_{\tr(\K)}(\Sigma K_{d+1}, P_{d-1})=0$, we have a unique map $X_d\rightarrow P_{d-1}$ such that the following diagram commutes
\[
\begin{tikzcd}
P_{d}\ar[d]\ar[r]&P_{d-1}\\
X_{d}\ar[ru,dashed,"\exists!"swap]&
\end{tikzcd}.
\]
A similar argument shows that the morphism $X_d\rightarrow P_{d-1}$ is a monomorphism in $\B'$.
We repeat the above procedures and we see that we have a sequence of conflations (\ref{conf:X0}) in $H^0(\B')$.
By definition, we see that the complex \ref{cplx:hom2} is exact for each $I\in\I$.

By Lemma~\ref{lem:Quillenexact}, we have an exact embedding $F:\B'\hookrightarrow \mod \P$ and the representable modules $P^{\wedge}$ are in the essential image.
Therefore the functor $F$ is an equivalence of abelian categories.
This shows condition (c).
\end{proof}
\begin{corollary}\label{cor:abelian}
Let $(\K,\P,\I)$ be a $d$-precluster tilting triple 
such that $\B'=\tau_{\leq 0}\B_{\K,\P,\I}^{(d+1)}$ is abelian.
Then the dg category $\P$ is concentrated in degree zero with weak kernels and the map sending $X\in \B'$ to $\Hom_{\B'}(-,X)|_{\P}$ 
extends to an exact equivalence $F:\B'\iso \mod\P$.
\end{corollary}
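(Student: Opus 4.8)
The plan is to deduce the statement directly from Proposition~\ref{prop:Abelian} and Lemma~\ref{lem:Quillenexact}, with essentially no new work. First I would invoke Proposition~\ref{prop:Abelian}: since $\B'=\tau_{\leq 0}\B_{\K,\P,\I}^{(d+1)}$ is abelian by hypothesis, conditions (a), (b) and (c) of that proposition hold. In particular condition (a) says that $\K$ is concentrated in non-negative degrees, hence $\P$ is concentrated in degree zero, and that $\P$ admits weak kernels; this already gives the first assertion of the corollary.

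Next, conditions (a) and (b) of Proposition~\ref{prop:Abelian} are the same as conditions (a) and (b) of Proposition~\ref{prop:Quillenexact}, so Lemma~\ref{lem:Quillenexact} applies: the assignment $X\mapsto \Hom_{\B'}(-,X)|_{\P}$ extends to a fully faithful exact functor $F\colon\B'\hookrightarrow \Mod\P$ which induces a bijection $\Ext^1_{\B'}(X,X')\iso\Ext^1_{\Mod\P}(FX,FX')$. As recorded in the proof of that lemma, if $X\in\B'$ has a projective presentation $P_1\to P_0\to X$ in $\B'$ (which exists since $\B'$ has enough projectives), then $FX$ is the cokernel of the induced map $P_1^{\wedge}\to P_0^{\wedge}$ of representables, hence finitely presented. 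Therefore $F$ factors through the inclusion $\mod\P\hookrightarrow\Mod\P$, giving a fully faithful exact functor $F\colon\B'\hookrightarrow\mod\P$.

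Finally, condition (c) of Proposition~\ref{prop:Abelian} is precisely the statement that $F\colon\B'\to\mod\P$ is dense. A fully faithful, dense functor is an equivalence of categories, and since $F$ is exact and an equivalence between abelian categories, its quasi-inverse is automatically exact; thus $F$ is an exact equivalence $F\colon\B'\iso\mod\P$. I do not expect any genuine obstacle: the corollary is an assembling of the already-proved Proposition~\ref{prop:Abelian} and Lemma~\ref{lem:Quillenexact}, and the only point that needs a word of justification is that $F$ takes values in the finitely presented modules, which is immediate from the explicit description of $FX$ as the cokernel of a map between representable $\P$-modules.
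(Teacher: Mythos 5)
Your proposal is correct and matches the paper's own reasoning: the corollary is indeed just an assembly of Proposition~\ref{prop:Abelian} (whose proof already establishes that $F$ is dense onto $\mod\P$ and hence an equivalence) together with Lemma~\ref{lem:Quillenexact}, including the observation that $FX$ is finitely presented as the cokernel of a map of representables. The only slight imprecision is your claim that conditions (a) and (b) of Proposition~\ref{prop:Abelian} are ``the same as'' those of Proposition~\ref{prop:Quillenexact} --- they are strictly stronger (weak kernels, and an ``if and only if'' in (b)) --- but since they imply the hypotheses of Lemma~\ref{lem:Quillenexact}, the argument goes through unchanged.
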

\begin{lemma}\label{lem:injectives}
Let $(\K,\P,\I)$ be a $d$-precluster tilting triple 
such that $\B'=\tau_{\leq 0}\B_{\K,\P,\I}^{(d+1)}$ is abelian.
Then the map sending an object $X$ to $\Hom_{\B'}(X,-)|_{\I}$ extends to a 
functor $G:\B'\rightarrow (\I\mbox{-}\mod)^{op}$, which is fully faithful when restricted to the full subcategory $H^0(\K)$.
\end{lemma}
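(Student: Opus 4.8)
The plan is to transport everything along the exact equivalence $F\colon\B'\iso\mod\P$ of Corollary~\ref{cor:abelian}, under which $\P$ becomes the subcategory of finitely generated projectives of $\B'$ and, by Proposition~\ref{prop:preclustertilting}, $\I$ sits inside the projective--injective objects of $\B'$; in particular $\mathbb E_{\B'}(-,I)=0$ for every $I\in\I$. Two input facts are needed: each projective of $\B'$ has an injective coresolution whose first $d+1$ terms lie in $\add\I$ (because $\B'$ has dominant dimension $\geq d+1$), and each object of $\K$ admits a coresolution of length $d+1$ by objects of $\I$ with last cosyzygy in $\B'$ (Lemma~\ref{lem:d-minimal AG exact dg category}(b) applied to $\B'$, which is $d$-minimal Auslander--Gorenstein by Proposition~\ref{prop:preclustertilting}). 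Since $\B'$ is abelian, all conflations occurring below are honest short exact sequences.

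First I would check that $G$ is well defined, i.e.\ that $\Hom_{\B'}(X,-)|_{\I}$ lies in $\I\mbox{-}\mod$ for every $X\in\B'$. Applying $\Hom_{\B'}(-,J)$ with $J\in\I$ to the defining conflations $X_{i+1}\to P_i\to X_i$ of $X$ (with $X_0=X$, $P_i\in\P$ for $0\le i\le d$, $X_{d+1}\in\K$) and using $\mathbb E_{\B'}(-,J)=0$ turns each of them into a short exact sequence, and splicing gives an exact sequence of functors on $\I$
\begin{equation*}
0\longrightarrow\Hom_{\B'}(X,-)|_{\I}\longrightarrow\Hom_{\B'}(P_0,-)|_{\I}\longrightarrow\cdots\longrightarrow\Hom_{\B'}(P_d,-)|_{\I}\longrightarrow\Hom_{\B'}(X_{d+1},-)|_{\I}\longrightarrow 0 .
\end{equation*}
The $\I$-coresolutions of the projectives $P_i$ and of $X_{d+1}\in\K$ identify the functors $\Hom_{\B'}(P_i,-)|_{\I}$ and $\Hom_{\B'}(X_{d+1},-)|_{\I}$ with cokernels of maps between representable left $\I$-modules; splicing these presentations into the sequence above shows $\Hom_{\B'}(X,-)|_{\I}\in\I\mbox{-}\mod$. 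Functoriality in $X$ is immediate and $\Hom_{\B'}$ is contravariant in its first variable, so we obtain a functor $G\colon\B'\to(\I\mbox{-}\mod)^{op}$ with the stated effect on objects.

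Next I would prove that $G$ is fully faithful on $H^0(\K)$. Fix $K,K'\in H^0(\K)$ and let $0\to K'\xrightarrow{\iota}I^0\xrightarrow{\phi}I^1$ be the first two steps of an $\I$-coresolution of $K'$; this is an exact sequence in $\B'$ with $I^0,I^1\in\I$. Applying $\Hom_{\B'}(-,J)$ for $J\in\I$ to the two associated short exact sequences yields a presentation $\Hom_{\I}(I^1,-)\to\Hom_{\I}(I^0,-)\to G(K')\to 0$ of the left $\I$-module $G(K')$, in which the first map corresponds under the Yoneda lemma to $\phi$. Applying $\Hom_{\I\mbox{-}\mod}(-,G(K))$ and the Yoneda isomorphism $\Hom_{\I\mbox{-}\mod}(\Hom_{\I}(I,-),G(K))\cong G(K)(I)=\Hom_{\B'}(K,I)$ gives
\begin{equation*}
\Hom_{\I\mbox{-}\mod}\big(G(K'),G(K)\big)\;\cong\;\ker\!\big(\Hom_{\B'}(K,I^0)\xrightarrow{\ \phi_{*}\ }\Hom_{\B'}(K,I^1)\big).
\end{equation*}
On the other hand, $\Hom_{\B'}(K,-)$ applied to $0\to K'\xrightarrow{\iota}I^0\xrightarrow{\phi}I^1$ identifies $\Hom_{\B'}(K,K')$ with the very same kernel, via $f\mapsto\iota\circ f$. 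A diagram chase (using that $I^0$ is injective in $\B'$, so morphisms out of $K'$ lift along $\iota$) shows that the resulting bijection $\Hom_{\B'}(K,K')\iso\Hom_{\I\mbox{-}\mod}(G(K'),G(K))$ is precisely $f\mapsto G(f)$. Hence $G$ restricted to $H^0(\K)$ is full and faithful.

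The step I expect to be the main obstacle is the finiteness used in the second paragraph, together with the length-two $\I$-copresentation of $K'$ used in the third. For $d\geq 1$ the $\I$-coresolutions above are long enough that all the relevant functors on $\I$ carry honest presentations by representables and the splicing is routine; for $d=0$ the coresolutions contribute only a single injective term, and one has to argue separately both that finite presentability still propagates along the long exact sequence and that $G(K')$ still admits a presentation by representable $\I$-modules. The remaining verifications (functoriality, the Yoneda identifications, the diagram chase) are formal.
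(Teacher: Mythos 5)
Your argument is correct and is essentially the paper's proof: both present $G$ of one of the two objects by a two-term complex of representable $\I$-modules coming from the start of an $\I$-coresolution, then combine the Yoneda lemma with left-exactness of $\Hom$ (the paper coresolves $K$ and tests against $K'$; you do the symmetric thing, and you additionally verify that $G$ lands in $\I\mbox{-}\mod$, which the paper leaves implicit). As for the $d=0$ case you flag: the paper obtains the second coresolution step not from Lemma~\ref{lem:d-minimal AG exact dg category}~(b) but from property~(2) of Definition~\ref{def:d-precluster tilting triple}, i.e.\ covariant finiteness of $H^0(\I)$ in $H^0(\B^{(d)}_{\K,\P,\I})$, which supplies the required left $\I$-approximation of the first cosyzygy uniformly in $d$.
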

\begin{proof}
Let $K$ and $K'$ be objects in $\K$. 
Since $H^0(\I)$ is covariantly finite in $H^0(\B_{\K,\P,\I}^{(d)})$, we have triangles in $\tr(\K)$
\[
K\rightarrow I_0\rightarrow U\rightarrow \Sigma K,
\]
\[
U\rightarrow I_1\rightarrow V\rightarrow \Sigma U,
\]
where $I_0$ and $I_1$ are in $\I$ and $U$ and $V$ are in $\B'$.
Then we have short exact sequences of left $\I$-modules
\[
0\rightarrow \Hom(U,-)|_{\I}\rightarrow \Hom(I_0,-)|_{\I}\rightarrow \Hom(K,-)|_{\I}\rightarrow 0,
\]
\[
0\rightarrow \Hom(V,-)|_{\I}\rightarrow \Hom(I_1,-)|_{\I}\rightarrow \Hom(U,-)|_{\I}\rightarrow 0.
\]
So we have an exact sequence in $(\I\mbox{-}\mod)^{op}$
\[
0\rightarrow G(K)\rightarrow G(I_0)\rightarrow G(I_1)
\]
We also have exact sequences of Hom spaces in $\tr(\K)$
\[
0=\Hom(K',\Sigma^{-1}U)\rightarrow \Hom(K',K)\rightarrow \Hom(K',I_0)\rightarrow \Hom(K',U), 
\]
\[
0=\Hom(K',\Sigma^{-1}V)\rightarrow \Hom(K',U)\rightarrow \Hom(K',I_1)\rightarrow \Hom(K',V).
\]
So we have the following diagram
\[
\begin{tikzcd}
0\ar[r]&\Hom_{\B'}(K',K)\ar[r]\ar[d,dashed,""swap]&\Hom_{\B'}(K',I_0)\ar[r]\ar[d,"\simeq"swap]&\Hom_{\B'}(K',I_1)\ar[d,"\simeq"]\\
0\ar[r]&(GK',GK)\ar[r]&(GK',GI_0)\ar[r]&(GK',GI_1)
\end{tikzcd}
\]
where the second row is a sequence of Hom spaces in $(\I\mbox{-}\mod)^{op}$.
So the functor $G|_{H^0(\K)}$ is fully faithful.

\end{proof}
\subsection{Iyama--Solberg's correspondence}\label{subsec:IS}
In this subsection, we assume that $k$ is a commutative artin ring. We compare our correspondence with the correspondence obtained in \cite[Theorem 4.5]{IyamaSolberg18}. 
Let $\Lambda$ be an Artin $k$-algebra and $\I$ the full subcategory of injective $\Lambda$-modules.

In \cite[1.4.1]{Iyama07a}, for $d\geq 1$, Iyama defined the  functors
\[
\tau_{d}=\tau\Omega^{d-1}:\underline{\mod}\Lambda\rightarrow\overline{\mod}\Lambda\text{ and } \tau_{d}^{-}=\tau^{-}\Omega^{-(d-1)}: \overline{\mod}\Lambda\rightarrow\underline{\mod}\Lambda
\]
as the {\em $d$-Auslander--Reiten translation}.

\begin{definition}[{\cite[Definition 3.2]{IyamaSolberg18}}]\label{def:d-precluster tilting}
A subcategory $\C$ of $\mod\Lambda$ is a {\em $d$-precluster tilting subcategory} if it satisfies the following conditions.
\begin{itemize}
\item[(i)] $\C$ is a generator-cogenerator for $\mod\Lambda$.
\item[(ii)] $\tau_{d}(\C)\subseteq \C$ and $\tau_{d}^{-}(\C)\subseteq \C$.
\item[(iii)] $\Ext^{i}_{\Lambda}(\C,\C)=0$ for $0<i<d$.
\item[(iv)] $\C$ is a functorially finite subcategory of $\mod\Lambda$.
\end{itemize}
If moreover $\C$ admits an additive generator $M$, we say that $M$ is a {\em $d$-precluster tilting module}.
\end{definition}
Put 
\begin{align*}
{^{\perp_{d-1}}\C}=\{\text{$X$ in $\mod\Lambda\mid\Ext^{i}_{\Lambda}(X,\C)=0$ for $0<i<d$}\},\\
{\C^{\perp_{d-1}}}=\{\text{$X$ in $\mod\Lambda\mid\Ext^{i}_{\Lambda}(\C,X)=0$ for $0<i<d$}\}.
\end{align*}
Suppose conditions (i), (iii) and (iv) hold and $d>1$, then by \cite[Proposition 3.8 (b)]{IyamaSolberg18}, condition (ii) is equivalent to ${^{\perp_{d-1}}\C}={\C^{\perp_{d-1}}}$.
Indeed, we have the following result. 
Here we include a proof for completeness.
\begin{lemma}\label{lem:taud}
Suppose $d>1$. Let $\C$ be an additive subcategory of $\mod\Lambda$ satisfying  the conditions (i), (iii) and (iv) in Definition~\ref{def:d-precluster tilting}. 
Then $\tau_{d}(\C)\subseteq \C$ (resp.~$\tau_{d}^{-}(\C)\subseteq \C$) if and only if ${^{\perp_{d-1}}\C}\subseteq \C^{\perp_{d-1}}$ (resp.~${^{\perp_{d-1}}\C}\supseteq \C^{\perp_{d-1}}$).
\end{lemma}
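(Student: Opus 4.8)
The statement is an Auslander--Reiten-duality type criterion: $\tau_d(\C)\subseteq\C$ should be detected by the inclusion ${^{\perp_{d-1}}\C}\subseteq\C^{\perp_{d-1}}$. The plan is to reduce everything to the defining adjunction/duality property of the $d$-Auslander--Reiten translation $\tau_d=\tau\Omega^{d-1}$ over the Artin algebra $\Lambda$. First I would recall the functorial characterization: for $X,Y\in\mod\Lambda$ there is a natural isomorphism (the $d$-analogue of the Auslander--Reiten formula)
\[
\underline{\Hom}_\Lambda(X,\tau_d Y)\;\cong\; D\,\Ext^d_\Lambda(Y,X)\quad\text{or, in cosyzygy form,}\quad \overline{\Hom}_\Lambda(\tau_d^- X,Y)\cong D\,\Ext^d_\Lambda(Y,X),
\]
together with the fact, used implicitly in \cite{IyamaSolberg18}, that under conditions (i), (iii), (iv) one has $\C^{\perp_{d-1}}={^{\perp}\!\C}$-type descriptions via syzygies: an object $X$ lies in $\C^{\perp_{d-1}}$ iff $\Ext^d_\Lambda(\C,X)$ controls the gluing of its $\C$-resolution, and dually for ${^{\perp_{d-1}}\C}$ in terms of $\Omega^{d-1}$ or $\Omega^{-(d-1)}$.

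Second, for the forward direction I would argue: assume $\tau_d(\C)\subseteq\C$ and take $X\in{^{\perp_{d-1}}\C}$. Using (i) I write down a $\C$-resolution (or coresolution) of length $d-1$, so that $X$ becomes, up to projectives/injectives, a $(d-1)$-st syzygy of an object built from $\C$; condition (iii) kills the intermediate extensions and reduces $\Ext^i_\Lambda(\C,X)$ for $0<i<d$ to a single $\Ext^1$ against $\Omega^{d-1}$ of something, which by the AR formula is $D\,\underline{\Hom}_\Lambda(-,\tau_d\C)$; since $\tau_d\C\subseteq\C$ and $X\in{^{\perp_{d-1}}\C}$ kills Hom into $\C$ in the relevant range, this vanishes, giving $X\in\C^{\perp_{d-1}}$. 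The reverse direction is the dual computation with $\tau_d^-=\tau^-\Omega^{-(d-1)}$ and injective coresolutions, swapping the roles of ${^{\perp_{d-1}}\C}$ and $\C^{\perp_{d-1}}$. The ``resp.'' version (with $\tau_d^-$) is obtained by the same dance with the opposite algebra or by applying the duality $D$.

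Third, to close the converse of each implication I would use that ${^{\perp_{d-1}}\C}$ and $\C^{\perp_{d-1}}$ both contain $\C$ (by (iii)) and both contain all projective-injective-free summands appropriately, so that $\tau_d\C$ itself lies in ${^{\perp_{d-1}}\C}$ automatically (again by (iii) applied to a syzygy computation), whence ${^{\perp_{d-1}}\C}\subseteq\C^{\perp_{d-1}}$ forces $\tau_d\C\subseteq\C^{\perp_{d-1}}$; a separate small argument identifies the intersection $\C^{\perp_{d-1}}\cap(\text{stable category})$ with $\C$ modulo injectives, yielding $\tau_d\C\subseteq\C$ on the nose.

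The main obstacle I anticipate is bookkeeping the passage between the ``$\Ext$-orthogonality'' description and the ``syzygy/translation'' description cleanly: one must be careful that $\tau\Omega^{d-1}$ and $\Omega^{d-1}$ interact correctly with the resolutions coming from condition (i), that projective and injective summands do not disturb the stable/costable Hom groups, and that the range $0<i<d$ collapses exactly to the single degree where the AR formula applies. This is precisely where \cite[Proposition 3.8]{IyamaSolberg18} does the work, so in practice I would either invoke the relevant part of that proposition or reproduce its syzygy-shifting lemma; the rest is formal.
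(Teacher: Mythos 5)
Your overall instinct---that everything reduces to the higher Auslander--Reiten duality $\Ext^i_\Lambda(X,Y)\cong D\Ext^{d-i}_\Lambda(Y,\tau_d X)$ for $X\in{^{\perp_{d-1}}\Lambda}$---is correct, and it is exactly how the paper handles the easy implication: since $\Lambda\in\C$ and (iii) give $\C\subseteq{^{\perp_{d-1}}\Lambda}$, one gets $\Ext^i(\C,{^{\perp_{d-1}}\C})\cong D\Ext^{d-i}({^{\perp_{d-1}}\C},\tau_d\C)$, and the right-hand side vanishes because $\tau_d\C\subseteq\C$ and $0<d-i<d$. Note that the final vanishing is an $\Ext^{1\sim d-1}$-vanishing into $\C$, not a $\Hom$-vanishing: your justification ``$X\in{^{\perp_{d-1}}\C}$ kills Hom into $\C$'' is not what membership in ${^{\perp_{d-1}}\C}$ means, and your reduction to a single $\Ext^1$ against $\Omega^{i-1}$ of an object of $\C$ produces $\tau_i\C$ rather than $\tau_d\C$ for $i<d$ unless you also coshift in the second variable. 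As written that step does not go through, though it is repairable by quoting the $d$-fold duality directly.

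The genuine gap is in the converse implication (${^{\perp_{d-1}}\C}\subseteq\C^{\perp_{d-1}}$ implies $\tau_d\C\subseteq\C$), which is where the content of the lemma lies. Your plan---show $\tau_d\C\subseteq{^{\perp_{d-1}}\C}$ ``automatically'', deduce $\tau_d\C\subseteq\C^{\perp_{d-1}}$ from the hypothesis, then identify $\C^{\perp_{d-1}}$ with $\C$ modulo injectives---fails at the last step: $\C^{\perp_{d-1}}$ is in general strictly larger than $\C$ (under your hypothesis it contains all of ${^{\perp_{d-1}}\C}$, in particular every module admitting the relevant coresolutions), so no such identification exists; the first step is also unsubstantiated, since the duality formula does not directly compute $\Ext^i(\tau_d C,C')$. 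What the paper actually proves is the double-orthogonal identity $({^{\perp_{d-1}}\C})^{\perp_{d-1}}=\C$, in three moves: first $\tau_d^{-}({^{\perp_{d-1}}\C})\subseteq{^{\perp_{d-1}}\C}$ (duality plus the hypothesis and $D\Lambda\in\C$), whence $({^{\perp_{d-1}}\C})^{\perp_{d-1}}\subseteq{^{\perp_{d-1}}\C}$; then, for $X\in({^{\perp_{d-1}}\C})^{\perp_{d-1}}$, a monic left $\C$-approximation $0\to X\to C\to Y\to 0$ (this is where (i), (iv) and $d>1$ enter) has cokernel $Y\in{^{\perp_{d-1}}\C}$, so $\Ext^1(Y,X)=0$ splits the sequence and forces $X\in\C$. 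Only then does one more application of the duality give $\tau_d\C\subseteq({^{\perp_{d-1}}\C})^{\perp_{d-1}}=\C$. None of this approximation-and-splitting machinery appears in your sketch, and deferring to \cite[Proposition 3.8]{IyamaSolberg18} would not close it either: that statement is the two-sided equivalence, whereas the lemma separates the $\tau_d$ and $\tau_d^{-}$ inclusions, which is precisely why the paper reproves it.
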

\begin{proof}
We prove the case for $\tau_d$ and the case for $\tau_{d}^{-}$ is similar.

Assume $\tau_{d}(\C)\subseteq \C$.
Since $\Lambda$ is contained in $\C$, we have $\C\subseteq {^{\perp_{d-1}}\Lambda}$ by condition (iii). 
So we have 
\[
\Ext^i_{\Lambda}(\C,{^{\perp_{d-1}}\C})\iso D\Ext^{d-i}_{\Lambda}({^{\perp_{d-1}}}\C,\tau_d(\C))=0
\] 
for $0<i<d$. 
It follows that ${^{\perp_{d-1}}}\C\subseteq \C^{\perp_{d-1}}$. 

Conversely, we assume that ${^{\perp_{d-1}}}\C\subseteq \C^{\perp_{d-1}}$.
Since $D\Lambda$ is contained in $\C$, it follows that $\C^{\perp_{d-1}}\subseteq (D\Lambda)^{\perp_{d-1}}$.
Hence ${^{\perp_{d-1}}}\C\subseteq (D\Lambda)^{\perp_{d-1}}$ and we have 
\[
\Ext^i_{\Lambda}(\C,{^{\perp_{d-1}}}\C)\iso D\Ext^{d-i}_{\Lambda}(\tau_{d}^{-}({^{\perp_{d-1}}}\C),\C)=0
\]
for $0<i<d$.
Hence $\tau_{d}^{-}({^{\perp_{d-1}}}\C)\subseteq {^{\perp_{d-1}}}\C$.
Since ${^{\perp_{d-1}}}\C\subseteq (D\Lambda)^{\perp_{d-1}}$, we have
\[
\Ext^i_{\Lambda}(({^{\perp_{d-1}}}\C)^{\perp_{d-1}},{^{\perp_{d-1}}}\C)\iso D\Ext^{d-i}_{\Lambda}(\tau_{d}^{-}({^{\perp_{d-1}}}\C),({^{\perp_{d-1}}}\C)^{\perp_{d-1}})=0
\]
for $0<i<d$. 
It follows that $({^{\perp_{d-1}}}\C)^{\perp_{d-1}}\subseteq {^{\perp_{d-1}}({^{\perp_{d-1}}}\C)}\subseteq {^{\perp_{d-1}}}\C$.
We show that $({^{\perp_{d-1}}}\C)^{\perp_{d-1}}=\C$.
Let $X$ be an object in $({^{\perp_{d-1}}}\C)^{\perp_{d-1}}$.
By conditions (i) and (iv), we have an exact sequence 
\begin{equation}\label{seq:Xexact}
\begin{tikzcd}
0\ar[r]&X\ar[r,tail,"f"]&C\ar[r, two heads]&Y\ar[r]&0
\end{tikzcd}
\end{equation}
where $f$ is a left $\C$-approximation of $X$.
Then $Y\in {^{\perp_{d-1}}}\C$ and the sequence (\ref{seq:Xexact}) splits.  
Hence we have $X\in \C$ and $({^{\perp_{d-1}}}\C)^{\perp_{d-1}}=\C$.
Since $\C\subseteq {^{\perp_{d-1}}\Lambda}$, by the assumption that ${^{\perp_{d-1}}}\C\subseteq \C^{\perp_{d-1}}$ we have 
\[
\Ext^i_{\Lambda}({^{\perp_{d-1}}}\C,\tau_d(\C))\iso D\Ext^{d-i}_{\Lambda}(\C,{^{\perp_{d-1}}}\C)=0
\]
for $0<i<d$. It follows that $\tau_{d}(\C)\subseteq ({^{\perp_{d-1}}}\C)^{\perp_{d-1}}=\C$ and we are done.
\end{proof}
Let $(\K,\P,\I)$ be a $d$-precluster tilting triple which satisfies 
conditions (a), (b) and (c) in Proposition~\ref{prop:Abelian}.
By Corollary~\ref{cor:abelian}, the connective exact dg category 
$\tau_{\leq 0}\B_{\K,\P,\I}^{(d+1)}$ can be identified with the abelian category $\mod\P$.
Then the category $\I$ can be identified with the full subcategory of projective-injectives in $\mod\P$, 
and the dg category $\K$ can be identified with the full dg subcategory 
of $\D^b_{dg}(\mod\P)$ consisting of all $(d+1)$-st syzygies of $\P$-modules. 
So $\K$ and $\I$ are determined by $\P$ if $\P$ is part of a $d$-precluster tilting triple $(\K,\P,\I)$ 
such that $\B'=\tau_{\leq 0}\B_{\K,\P,\I}^{(d+1)}$ is abelian.

Now, suppose that $\P=\add M$ for some $\Lambda$-module $M$ where $\Lambda$ is an Artin algebra, and by Lemma~\ref{lem:injectives}, we also assume that $\I=\inj\Lambda$.
We identify $\P$, respectively $\I$, with their essential images under the Yoneda embedding $\P\hookrightarrow \mod\P$. 
We further identify them with their essential images under the dg functor $\mod\P\rightarrow \D^b_{dg}(\mod\P)$ and view them as full dg subcategories of $\K$, which is a full dg subcategory of $\D^b_{dg}(\mod\P)$ consisting of all $(d+1)$-st syzygies of $\P$-modules.
We claim that the triple $(\K,\P,\I)$ is $d$-precluster tilting satisfying the conditions in Proposition~\ref{prop:Abelian} if and only if  the $\Lambda$-module $M$ is $d$-precluster tilting.
We identify $\pretr(\K)$ with $\D^b_{dg}(\mod\P)$ via the canonical quasi-equivalence $\pretr(\K)\iso \D^b_{dg}(\mod\P)$ induced by the inclusion $\K\hookrightarrow \D^b_{dg}(\mod\P)$.

\begin{lemma}
Assume that $M$ is a $d$-precluster tilting $\Lambda$-module. 
Then the triple $(\K,\P,\I)$ defined above is $d$-precluster tilting and satisfies 
conditions (a), (b) and (c) in Proposition~\ref{prop:Abelian}.
\end{lemma}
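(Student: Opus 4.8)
The plan is to reduce the statement to the classical Iyama--Solberg correspondence (Theorem~\ref{intro:classical}) combined with Lemma~\ref{lem:d-minimal AG exact dg category}. First I would set $\Gamma\coloneqq\End_\Lambda(M)$; by Theorem~\ref{intro:classical} this is a $d$-minimal Auslander--Gorenstein Artin $k$-algebra, and the standard (Yoneda--Morita) equivalence induced by $\Hom_\Lambda(M,-)$ identifies $\mod\P=\mod(\add M)$ with $\mod\Gamma$. I would then check that the abelian category $\mod\P$, viewed as a connective exact dg category concentrated in degree zero, is $d$-minimal Auslander--Gorenstein in the sense of the present paper: its extriangulated structure is the one coming from short exact sequences, so $\mathbb E^i=\Ext^i_\Gamma$; the condition $\mathbb E^{d+2}(-,Q)=0$ for projective $Q$ amounts to $\Id_\Gamma\Gamma\le d+1$; and the dominant dimension of the extriangulated category $\mod\Gamma$ coincides with $\dom\Gamma$, which is $\ge d+1$. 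Hence Lemma~\ref{lem:d-minimal AG exact dg category} applies to $\A\coloneqq\mod\P$.

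Next I would identify the triple $(\K_\A,\P_\A,\I_\A)$ produced by Lemma~\ref{lem:d-minimal AG exact dg category} with the triple $(\K,\P,\I)$ of the statement. The full dg subcategory $\P_\A$ on the projectives of $\mod\P$ is, by Yoneda and the idempotent-completeness of the Krull--Schmidt category $\add M$, precisely the image of $\P$; the full dg subcategory $\K_\A$ of $\D^b_{dg}(\mod\P)$ on the $(d+1)$-st syzygies of $\mod\P$-objects is $\K$ by definition; and the projective-injective objects of $\mod\P\simeq\mod\Gamma$ are, by the classical input of Iyama--Solberg (they are obtained by applying $\Hom_\Lambda(M,-)$ to $\inj\Lambda$, and it is through them that $\Lambda$ is recovered from $\Gamma$), exactly the Yoneda image of $\I=\inj\Lambda\subseteq\add M=\P$, so $\I_\A=\I$. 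The last assertion of Lemma~\ref{lem:d-minimal AG exact dg category} then gives that $(\K,\P,\I)$ is a $d$-precluster tilting triple.

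Finally I would deduce conditions (a), (b), (c) of Proposition~\ref{prop:Abelian}. Having identified $(\K,\P,\I)$ with the triple attached to $\A=\mod\P$, Remark~\ref{rmk:wic} (together with Lemma~\ref{lem:d-minimal AG exact dg category}~(e)) identifies $\B'\coloneqq\tau_{\le 0}\B^{(d+1)}_{\K,\P,\I}$ with $\overline{\mod\P}=\mod\P$, the abelian category $\mod\P$ being already idempotent-complete; alternatively this equivalence $\B'\iso\mod\P$ follows from Theorem~\ref{thm:Iyama--Solberg correspondence}. In particular $\B'$ is concentrated in degree zero and abelian, so the equivalence of Proposition~\ref{prop:Abelian} forces conditions (a), (b) and (c) to hold.

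The steps requiring care are all in the second and third paragraphs: matching the extriangulated dominant-dimension and injective-codimension requirements with the classical homological inequalities for $\Gamma$ (routine once the extriangulated structure on $\mod\Gamma$ is spelled out), and verifying that $\inj\Lambda$, Yoneda-embedded into $\mod\P$, genuinely coincides with the subcategory of projective-injectives of $\mod\P$. This last point is essentially the content of the classical Iyama--Solberg correspondence, and is the place where one must be careful not to argue in a circle; I would isolate it as the use of \cite[Theorem 4.5]{IyamaSolberg18} and the description of the recovery of $\Lambda$ from $\Gamma$ given there.
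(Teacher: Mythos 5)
Your argument is correct, but it takes a genuinely different route from the paper's. The paper proves this lemma by a direct verification of properties (1)--(5) of Definition~\ref{def:d-precluster tilting triple} and of conditions (a)--(c) of Proposition~\ref{prop:Abelian}, working straight from conditions (i)--(iv) of Definition~\ref{def:d-precluster tilting}: it uses the $\tau_d$-stability of $\add M$ via Lemma~\ref{lem:taud} (equivalently ${^{\perp_{d-1}}\P}=\P^{\perp_{d-1}}$ for $d>1$, plus an Auslander--Reiten factorization argument for $d=1$) to get $\Ext^{\geq 1}(\K,\P)=0$, and the Horseshoe Lemma for extension-closedness. You instead funnel everything through the classical correspondence: $\Gamma=\End_\Lambda(M)$ is $d$-minimal Auslander--Gorenstein by \cite[Theorem 4.5]{IyamaSolberg18}, hence $\A=\mod\P\simeq\mod\Gamma$ satisfies the hypotheses of Lemma~\ref{lem:d-minimal AG exact dg category}, which produces a $d$-precluster tilting triple; the identifications $\P_{\A}=\P$, $\K_{\A}=\K$ and $\I_{\A}=\I$ (the last by the Morita--Tachikawa description of the projective-injectives of $\mod\End_\Lambda(M)$ for a generator-cogenerator $M$), together with Remark~\ref{rmk:wic} and the ``only if'' direction of Proposition~\ref{prop:Abelian} applied to $\B'\simeq\overline{\mod\P}=\mod\P$, finish the job. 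Each step you flag as delicate is indeed standard, and your route is shorter. The trade-off is that it consumes the hard direction of the classical Iyama--Solberg theorem (that $\End_\Lambda(M)$ is $d$-minimal Auslander--Gorenstein) as an input, whereas the paper's verification keeps this subsection independent of \cite[Theorem 4.5]{IyamaSolberg18}. With your route the assertion that Theorem~\ref{thm:Iyama--Solberg correspondence} \emph{generalizes} the classical correspondence becomes circular in one direction, since the classical theorem is then a hypothesis rather than a consequence. You correctly identify the place where circularity threatens, but the remedy is not merely to isolate the citation --- it is to prove the lemma without it, which is exactly what the paper's direct computation accomplishes.
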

\begin{proof}
Property (1) in Definition~\ref{def:d-precluster tilting triple} and conditions (a) and (c) and the ``if" part of condition (b) are straightforward to check. 
Property (5) follows from the definition of $\K$ and the fact that a short exact sequence of $\P$-modules gives rise to a triangle in $\D^b(\mod\P)\iso \tr(\K)$. 
Property (2) is immediate because $\I=\inj\Lambda$ has an additive generator.

It remains to show properties (3) and (4) and the ``only if" part of condition (b). 
We show them in the $d=1$ case and $d>1$ case separately.
Since $\P$ is contravariantly finite in $\mod\Lambda$ and contains $\Lambda$, the functor 
\[
F:\mod\Lambda\rightarrow \mod\P,\quad X\mapsto \Hom_{\mod\Lambda}(-,X)|_{\P}
\]
is fully faithful.
Each object $K$ in $\K$ is of the form $F(K_{d+1})$ for some complex 
in $\mod\Lambda$
\begin{equation}\label{cplx:K1}
\begin{tikzcd}
0\ar[r]&K_{d+1}\ar[r,"f_{d+1}"]&P_d\ar[r,"h_d"]&P_{d-1}\ar[r]&\ldots\ar[r]&P_2\ar[r]&P_1\ar[r]&P_0,
\end{tikzcd}
\end{equation}
where $P_i\in\P$ for $0\leq i\leq d$, which remains exact after applying the functor $\Hom_{\mod\Lambda}(M,-)$.

Suppose $d=1$. Then the objects in $\K$ are precisely the $\P$-modules $\Hom_{\mod\Lambda}(-,K)|_{\P}$ 
where $K\in\mod\Lambda$: each $\Lambda$-module $K$ can be realized as 
the kernel of a morphism between objects in $\P$ since $\P$ contains an injective cogenerator.
The ``only if" part of condition (b) is straightforward to check.
Let $K$ be any object in $\mod\Lambda$.
By items (i) and (iv), we have an exact sequence in $\mod\Lambda$
\[
\begin{tikzcd}
0\ar[r]&K'\ar[r,tail]&P\ar[r,two heads]&K\ar[r]&0,
\end{tikzcd}
\]
where $P\in\P=\add M$, which remains exact after applying the functor $\Hom_{\mod\Lambda}(M,-)$.
So any morphism $K'\rightarrow \tau M$ factors through the morphism $K'\rightarrow P$, cf.~\cite[Chapter III, Corollary 4.2]{Auslander76}.
Since $\tau(\P)\subseteq \P$ and $\tau^{-}(\P)\subseteq \P$, the functor $\tau:\underline{\P}\rightarrow \overline{\P}$ is an equivalence of categories. 
It follows that any morphism $K'\rightarrow M$ factors through the morphism $K'\rightarrow P$.
Therefore we have $\Ext^{\geq 1}_{\mod\P}(\Hom_{\mod\Lambda}(-,K)|_{\P},\Hom_{\mod\Lambda}(-,M))=0$.
This proves property (4). Property (3) also follows by applying the Horseshoe Lemma. 

Suppose $d>1$. Then we have ${^{\perp_{d-1}}\P}={\P^{\perp_{d-1}}}$.
Consider a complex (\ref{cplx:HK2}) in $H^0(\K)$ such that the corresponding complex (\ref{cplx:K2}) is exact for each $P\in \P$. 
Since $G$ is fully faithful, it is given by a complex (\ref{cplx:K1}) under the functor $G$
where $P_i\in\P$ for $0\leq i\leq d$, which remains exact after applying the functor $\Hom_{\mod\Lambda}(M,-)$.
Since $\Lambda$ is contained in $\P$, the complex (\ref{cplx:K1}) is exact in $\mod\Lambda$. 
Since $\I$ consists of injective $\Lambda$-modules and $G$ is fully faithful, we see that the corresponding complex (\ref{cplx:hom2}) is exact for each $I\in\I$.
This shows the ``only if" part of condition (b).
Let us show property (4).
Let $K$ be an object in $\K$. So it is of the form $F(K_{d+1})$ for some complex (\ref{cplx:K1})
in $\mod\Lambda$
where $P_i\in\P$ for $0\leq i\leq d$, which remains exact after applying the functor $\Hom_{\mod\Lambda}(M,-)$.
Then we have $K_{d+1}\in {\P^{\perp_{d-1}}}$. 
Since we have ${^{\perp_{d-1}}\P}={\P^{\perp_{d-1}}}$, the object $K_{d+1}$ lies in ${^{\perp_{d-1}}\P}$.
Since $\P=\add M$ is contravariantly finite in $\mod\Lambda$, we have an exact sequence in $\mod\Lambda$
\[
\cdots \rightarrow P_{d+3} \xrightarrow{g_{d+2}} P_{d+2}\xrightarrow{g_{d+1}} P_{d+1}\xrightarrow{g_{d}} K_{d+1}\rightarrow 0
\] 
which is exact after applying the functor $G$.
Similarly we have that $\Im (g_i)\in {^{\perp_{d-1}}\P}$ for $i\geq d+1$.
It follows that for each $P\in\P$ we have
\[
\Ext^{\geq 1}_{\mod\P}(\Hom_{\mod\Lambda}(-,K_{d+1})|_{\P},\Hom_{\mod\Lambda}(-,P))=0.
\]
This shows property (4).
By condition (iv), we have a left $\P$-approximation $K_{d+1}\rightarrowtail Q_{d}$ with $L_d$ its cokernel. 
Again we have a  left $\P$-approximation $L_{d}\rightarrowtail Q_{d-1}$.  We repeat the above procedure and we have a complex in $\mod\Lambda$
\begin{equation}\label{cplx:K2'}
\begin{tikzcd}
0\ar[r]&K_{d+1}\ar[r]&Q_d\ar[r]&Q_{d-1}\ar[r]&\ldots\ar[r]&Q_2\ar[r]&Q_1\ar[r]&Q_0,
\end{tikzcd}
\end{equation}
where $Q_i\in\P$ for $0\leq i\leq d$.
Since $K_{d+1}\in {\P^{\perp_{d-1}}}$, the above complex remains exact after applying the functor $\Hom(M,-)$.
So we may replace the complex (\ref{cplx:K1}) by (\ref{cplx:K2'}). Since $f_{d+1}$ is a left $\P$-approximation, we have 
\[
\Ext^{i}_{\mod\P}(\Hom_{\mod\Lambda}(-,L_{d})|_{\P},\Hom_{\mod\Lambda}(-,M))=0.
\]
for $1\leq i\leq d-1$.
So we may apply the Horseshoe Lemma and the dg category $\K$ is extension-closed in $\pretr(\K)\iso\D^b_{dg}(\mod\P)$.
This shows property (3) and hence finishes the proof.
\end{proof}
\begin{lemma}
Assume the triple $(\K,\P,\I)$ is $d$-precluster tilting and satisfies the conditions (a), (b) and (c) in Proposition~\ref{prop:Abelian}. 
Then the $\Lambda$-module $M$ is $d$-precluster tilting.
\end{lemma}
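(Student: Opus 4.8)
The plan is to verify the four defining conditions (i)--(iv) of Definition~\ref{def:d-precluster tilting} for the subcategory $\C=\P=\add M$ of $\mod\Lambda$. By hypothesis and Corollary~\ref{cor:abelian}, the connective exact dg category $\B'=\tau_{\leq 0}\B_{\K,\P,\I}^{(d+1)}$ is the abelian category $\mod\P=\mod\Gamma$, where $\Gamma=\End_\Lambda(M)$ is an Artin $k$-algebra; by Proposition~\ref{prop:preclustertilting} it is $d$-minimal Auslander--Gorenstein, so $\Id_\Gamma\Gamma\leq d+1$, $\dom\Gamma\geq d+1$, and the projective-injective objects of $\mod\Gamma$ are exactly those of $\I=\inj\Lambda$. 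Throughout I assume $d\geq 1$; the case $d=1$, where Lemma~\ref{lem:taud} does not apply, is treated directly.

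The first step is to make the functor $G\colon H^0(\K)\hookrightarrow(\I\mbox{-}\mod)^{op}$ of Lemma~\ref{lem:injectives} explicit. The assignment $X\mapsto\Hom_\Lambda(X,-)|_{\inj\Lambda}$ is an exact equivalence $\mod\Lambda\iso(\I\mbox{-}\mod)^{op}$ (for essential surjectivity, a finitely presented covariant functor on $\inj\Lambda$ has the form $\Hom_\Lambda(\ker(I^1\to I^0),-)|_{\inj\Lambda}$ for a morphism $I^1\to I^0$ of injectives). Composing, $G$ becomes an exact functor $\B'\to\mod\Lambda$, fully faithful on $H^0(\K)$, and a Yoneda computation (using $\P\iso\proj\Gamma$) shows that on $\P$ it is the prescribed inclusion $\add M\hookrightarrow\mod\Lambda$ and on $\I$ it is the inclusion $\inj\Lambda\hookrightarrow\mod\Lambda$. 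Thus $G$ realises $H^0(\K)$ as a full exact subcategory of $\mod\Lambda$ containing both $\add M$ and $\inj\Lambda$; it is the analogue, in the present dg framework, of the embedding of a $d$-minimal Auslander--Gorenstein module category into $\mod\Lambda$ used in \cite{IyamaSolberg18}. In particular $\Lambda\cong\End_\Gamma(J)$ for $J$ a basic projective-injective $\Gamma$-module, so the ambient pair $\add M\subseteq\mod\Lambda$ is intrinsic to the triple.

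Granting this, the conditions follow. (i): $M$ is a cogenerator since $\inj\Lambda=\I\subseteq\P=\add M$; and $M$ is a generator because $\dom\Gamma\geq 2$ and, by the Morita--Tachikawa correspondence together with the identification of the projective-injective part of $\mod\Gamma$ with $\inj\Lambda$, the pair $(\Lambda,M)$ is recovered from $\Gamma$ (up to Morita equivalence) as a generator-cogenerator pair. (iv): $\P$ has weak kernels by Proposition~\ref{prop:Abelian}(a), and functorial finiteness of $\add M$ in $\mod\Lambda$ then follows from $M$ being a generator-cogenerator with $\End_\Lambda(M)=\Gamma$. (iii): $\Ext^i_\Lambda(M,M)=0$ for $0<i<d$ is precisely the statement $\dom\Gamma\geq d+1$ read through the homological characterisation of the dominant dimension of $\End_\Lambda(M)$ for a generator-cogenerator $M$; alternatively, transport a minimal injective copresentation of $\Gamma$ over $\Gamma$ (whose first $d+1$ terms lie in $\I=\inj\Lambda$) through $G$ to an $\inj\Lambda$-coresolution in $\mod\Lambda$ and dimension-shift. (ii): by Lemma~\ref{lem:taud} it suffices to prove ${}^{\perp_{d-1}}(\add M)=(\add M)^{\perp_{d-1}}$; this symmetry is exactly what the bound $\Id_\Gamma\Gamma\leq d+1$ becomes after transport through $G$, using that $\K$ consists of the $(d+1)$-st syzygies of $\mod\P$-modules together with property (4) of Definition~\ref{def:d-precluster tilting triple} ($\Ext^{\geq 1}(\K,\P)=0=\Ext^{\geq 1}(\P,\K)$).

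I expect the main obstacle to be condition (ii): making rigorous the passage, through $G$, between the injective-dimension bound on $\Gamma$ and the $\perp_{d-1}$-symmetry over $\Lambda$ --- that is, checking that $G$ genuinely intertwines ``$(d+1)$-st syzygy over $\Gamma$'' with the $\Lambda$-side description of $\K$, with the correct homological shifts. Once that dictionary is in place, each of (i)--(iv) reduces to a short diagram chase or a direct appeal to classical homological algebra. A more economical but less self-contained alternative, which I would at least point out: once $M$ is known to be a generator-cogenerator with $\End_\Lambda(M)=\Gamma$ a $d$-minimal Auslander--Gorenstein algebra, the classical Iyama--Solberg correspondence (Theorem~\ref{intro:classical}) applies --- its inverse assignment, evaluated at $\Gamma$, returns the Morita class of $(\Lambda,M)$ (this being the same Morita--Tachikawa recovery used above), and the module in that Morita class is $d$-precluster tilting, whence so is $M$.
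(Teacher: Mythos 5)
Your outline identifies the right checklist --- conditions (i)--(iv) of Definition~\ref{def:d-precluster tilting} --- and some of your reductions are sound (e.g.\ reading (iii) off M\"uller's characterisation of $\dom\End_{\Lambda}(M)$ for a generator-cogenerator is a legitimate alternative to the paper's direct use of condition (b) on an injective coresolution). But the two steps that carry the real content are not proved. The first is the generator half of (i). Your argument is circular: you invoke the Morita--Tachikawa correspondence to say that ``the pair $(\Lambda,M)$ is recovered from $\Gamma$ as a generator-cogenerator pair'', but Morita--Tachikawa only tells you that $\Gamma$ (having $\dom\Gamma\geq 2$) arises from \emph{some} generator-cogenerator pair $(\Lambda',M')$; identifying that pair with $(\Lambda,M)$ --- rather than with some other module having the same endomorphism ring --- is exactly what requires knowing that $M$ is a generator, or else a careful bimodule-level recovery ($M\cong D(e\Gamma)$ for $e$ the idempotent of the projective-injective part) that you do not supply. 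The paper instead gives a direct argument: it first shows $I^{\wedge}$ is injective in $\mod\P$ using conditions (b) and (c), then proves that the canonical map $\Hom_{\Lambda}(\Lambda,I)\rightarrow \Hom_{\mod\P}(\Hom_{\Lambda}(-,\Lambda)|_{\P},I^{\wedge})$ is bijective, and deduces that a right $\add M$-approximation sequence $Q_1\rightarrow Q_0\rightarrow \Lambda\rightarrow 0$ is already exact in $\mod\Lambda$, hence splits, giving $\Lambda\in\add M$. Nothing in your proposal substitutes for this step, and your ``economical alternative'' at the end (apply Theorem~\ref{intro:classical} plus Morita--Tachikawa uniqueness) suffers from the same circularity, since it too presupposes that $M$ is a generator-cogenerator.

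The second gap is condition (ii), which you yourself flag as ``the main obstacle'' and then leave as a dictionary to be checked. Two points here. First, the $d=1$ case is announced (``treated directly'') but never treated; the paper needs a genuinely separate argument there, producing $\tau^{-}(N)$ as a direct summand of a right $\P$-approximation by exploiting property (4) of Definition~\ref{def:d-precluster tilting triple} and the equivalence $\tau:\underline{\P}\rightarrow\overline{\P}$. Second, for $d>1$ the mechanism is not the bound $\Id_{\Gamma}\Gamma\leq d+1$ ``transported through $G$'': the paper proves the inclusion ${\P^{\perp_{d-1}}}\subseteq {^{\perp_{d-1}}\P}$ by showing that for $Y\in\P^{\perp_{d-1}}$ the module $G(Y)$ lies in $\K$, so that property (4) gives $\Ext^{\geq 1}_{\mod\P}(G(Y),G(P))=0$, and then transporting a projective resolution of $G(Y)$ back to $\mod\Lambda$; the reverse inclusion comes from Iyama's $\tau_d$--$\tau_d^{-}$ duality, not from the injective-dimension bound. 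So the homological bookkeeping you defer is precisely the proof, and as written the proposal does not contain it.
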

\begin{proof}
Item (iv) in Definition~\ref{def:d-precluster tilting} is straightforward.
Since $\P=\add M$ contains the injective $\Lambda$-modules, it is a cogenerator in $\mod\Lambda$.
Let $I$ be an object in $\I$. 
We show that $I^{\wedge}$ is injective in $\mod\P$.
Let $X$ be an object in $\mod\P$.
By condition (c) in Proposition~\ref{prop:Abelian} (and condition (a) if $d=1$), we have an exact sequence in $\mod\P$
\[
\begin{tikzcd}
P_2^{\wedge}\ar[r]&P_1^{\wedge}\ar[r]&P_0^{\wedge}\ar[r]&X\ar[r]&0.
\end{tikzcd}
\]
By applying the functor $\Hom_{\mod\P}(-,I^{\wedge})$, we get a sequence of Hom spaces in $\mod\P$
\[
\begin{tikzcd}
0\ar[r]&\Hom(X,I^{\wedge})\ar[r]&\Hom(P_0^{\wedge},I^{\wedge})\ar[r]&\Hom(P_1^{\wedge},I^{\wedge})\ar[r]&\Hom(P_2^{\wedge},I^{\wedge})
\end{tikzcd}
\]
which is exact by condition (b). It follows that $\Ext^{1}_{\mod\P}(X,I^{\wedge})=0$ and hence $I^{\wedge}$ is injective in $\mod\P$.
Let us show that $\P$ also contains the projective $\Lambda$-modules.
The canonical morphism $\Hom_{\mod\Lambda}(\Lambda,I)\rightarrow \Hom_{\mod\P}(\Hom_{\mod\Lambda}(-,\Lambda)|_{\P},I^{\wedge})$ is an isomorphism for $I\in \I$. Indeed, since $I^{\wedge}$ is injective, it is isomorphic to $D\Hom_{\mod\Lambda}(P,-)$ for some $P\in\P$. Then we have 
\begin{align*}
\Hom_{\mod\P}(\Hom_{\mod\Lambda}(-,\Lambda)|_{\P},I^{\wedge})&\iso \Hom_{\mod\P}(\Hom_{\mod\Lambda}(-,\Lambda)|_{\P},D\Hom(P,-))\\&\iso D\Hom(P,\Lambda)\\&\iso \Hom(\Lambda,I).
\end{align*}
We have a sequence in $\mod\Lambda$
\begin{equation}\label{seq:exact}
\begin{tikzcd}
Q_1\ar[r,"g"]&Q_0\ar[r,"f"]&\Lambda\ar[r]&0,
\end{tikzcd}
\end{equation}
where $Q_1$ and $Q_0$ are in $\P$, which becomes exact after applying the functor $\Hom_{\mod\Lambda}(M,-)$.
Since $I^{\wedge}$ is injective in $\mod\Lambda$ for each injective $I\in\mod\Lambda$, by the above we have an exact sequence
\[
\begin{tikzcd}
0\ar[r]&\Hom(\Lambda,I)\ar[r]&\Hom(Q_0,I)\ar[r]&\Hom(Q_1,I).
\end{tikzcd}
\]
It follows that the sequence (\ref{seq:exact}) is exact in $\mod\Lambda$ and hence $\Lambda$ is a direct summand of $Q_0$.
Therefore we have $\Lambda\in\P$ and this proves item (i) in Definition~\ref{def:d-precluster tilting}. 
As a consequence, the functor 
\[
G:\mod\Lambda\rightarrow \mod\P,\quad X\mapsto \Hom_{\mod\Lambda}(-,X)|_{\P}
\]
is fully faithful.

Let us show item (iii). 
We take the first $d$-term of an injective coresolution of $M$ in $\mod\Lambda$
\begin{equation}\label{seq:coresolution}
\begin{tikzcd}
0\ar[r]&M\ar[r]&I_d\ar[r]&I_{d-1}\ar[r]&\ldots\ar[r]&I_1\ar[r]&I_0.
\end{tikzcd}
\end{equation}
By condition (b) in Proposition~\ref{prop:Abelian}, the sequence (\ref{seq:coresolution}) remains exact after applying the functor $\Hom_{\mod\Lambda}(M,-)$.
So by definition we have $\Ext^{i}(M,M)=0$ for $0<i<d$.

Finally let us show item (ii). We will show this in $d=1$ case and $d>1$ case separately.
Assume $d=1$.
Let us show that $\tau^{-}(\P)\subset \P$. 
Since $\P=\add M$ contains the injectives and the projectives, 
and the number of isoclasses of indecomposable injectives 
equals the number of isoclasses of indecomposable projectives, 
it follows that $\tau(\P)\subseteq \P$.
Let $N$ be an indecomposable noninjective $\Lambda$-module in $\P$.
We have $\Ext^{\geq 1}_{\mod\P}(G(\tau^{-1}(N)),G(P))=0$ for $P\in\P$ by property (4) in Definition~\ref{def:d-precluster tilting triple}.
We have a sequence in $\mod\Lambda$
\begin{equation}\label{seq:tauN}
\cdots\rightarrow P^2\rightarrow P^1\xrightarrow{u} P^0\xrightarrow{v} \tau^{-}(N)\rightarrow 0,
\end{equation}
where $P^i\in\P$ for $i\geq 0$, and which is exact after applying the functor $G$. 
So after applying the functor $G$, it is a projective resolution of $G(\tau^{-1}(N))$. 
It follows that the above sequence (\ref{seq:tauN}) is exact after applying the functor $\Hom_{\mod\Lambda}(-,P)$ for each $P\in\P$.
Put $L=\Im(u)$. 
Then the inclusion of $L$ into $P^0$ induces an epimorphism $\Hom(P^0,P)\rightarrow \Hom(L,P)$ for each $P\in\P$.
Hence the morphism $v:P^0\rightarrow \tau^{-}(N)$ induces an epimorphism $\Hom(\tau^{-}(P),P^0)\rightarrow \Hom(\tau^{-}(P),\tau^{-}(N))$.
In particular, the morphism $v$ splits and $\tau^{-}(N)$ is a direct summand of $P^0\in \P$. 

Assume $d>1$. 
Let us show that $\tau_{d}^{-}(\P)\subseteq \P$. 
By \cite[Theorem 1.4.1]{Iyama07a}, it follows that $\tau_{d}(\P)\subseteq \P$.
By Lemma~\ref{lem:taud}, it suffices to show ${^{\perp_{d-1}}\P}\supseteq \P^{\perp_{d-1}}$.
Let $Y\in \P^{\perp_{d-1}}$.
Then we have an exact sequence in $\mod\Lambda$
\[
0\rightarrow Y\rightarrow Q_d\rightarrow Q_{d-1}\rightarrow \cdots\rightarrow \Q_1\rightarrow Q_0,
\]
where $Q_d\in \P$, and which is exact after applying the functor $\Hom(-,M)$.
Since $Y\in \P^{\perp_{d-1}}$, the above sequence remains exact after applying the functor $\Hom(P,-)$ for each $P\in\P$.
Hence $G(Y)$ lies in $\K$.
By property (4) of Definition~\ref{def:d-precluster tilting triple}, we have $\Ext^{\geq 1}_{\mod\P}(G(Y), G(P))=0$ for each $P\in \P$.
We also have an exact sequence in $\mod \Lambda$
\[
\cdots \rightarrow Q_{d+3} \xrightarrow{g_{d+2}} Q_{d+2}\xrightarrow{g_{d+1}} Q_{d+1}\xrightarrow{g_{d}} Y\rightarrow 0
\]
where $Q_i\in \P$ for $i\geq d+1$, and which remains exact after applying the functor $\Hom(P,-)$ for each $P\in \P$. So we get a projective resolution of $G(Y)$ after applying the functor $G$.
Hence the above sequence remains exact after applying the functor $\Hom(-,P)$ for each $P\in \P$.
Since $\Ext^i_{\Lambda}(\P,\P)=0$ for $0<i<d$, it follows that $Y\in {^{\perp_{d-1}}\P}$.
\end{proof}
\subsection{Connection to higher Auslander--Solberg correspondence for exact categories}\label{subsec:Grevstad}
In the rest of this section, we assume $d\geq 1$ and compare our correspondence with the correspondence obtained by Grevstad in \cite{Grevstad22}.
By combining Proposition~\ref{prop:generalizationpre} and Lemma~\ref{lem:generalizationpre}, we see that we get a generalization of the correspondence \cite[Theorem 3.8]{Grevstad22} in the case of weakly idempotent complete exact categories with enough injectives.
\begin{definition}[{\cite[Proposition 1.7]{DraxlerReitenSmaloSolberg99}}]\label{def:relativestructure}
Let $\M\subseteq \E$ be a full subcategory of an exact category. 
The relative exact structure $F_{\M}$ is given by the conflations in $\E$ which remain exact after applying the functor $\Hom_{\E}(M,-)$ for any $M\in \M$.
\end{definition}
\begin{definition}[\cite{Grevstad22}]\label{def:d-precluster tilting exact}
Let $\M$ be a subcategory of an exact category $\E$. 
We say that $\M$ is {\em partial $d$-precluster tilting} if it satisfies
\begin{itemize}
\item[(a)] every object $E\in\E$ has a left $\M$-approximation by an inflation $E\rightarrowtail M$;
\item[(b)] every object $E\in\E$ has a right $\M$-approximation by a deflation $M'\twoheadrightarrow E$
\item[(c)] $\M$ is $d$-rigid, i.e.~$\Ext^{1\sim d-1}_{\E}(\M,\M)=0$;
\item[(d)] the relative injective dimension $\Id_{F_{\M}}M$ is strictly less than $d$ for any $M\in\M$,  where $F_{\M}$ is the exact substructure defined in Definition~\ref{def:relativestructure}; 
\end{itemize}
\end{definition}
A morphism $f:X\rightarrow Y$ in an exact category $\E$ is {\em admissible} if $f$ admits a factorization $X\twoheadrightarrow I\rightarrowtail Y$.
A cochain complex over $\E$
\[
\ldots\rightarrow X^{n-1}\xrightarrow{d^{n-1}} X^n\xrightarrow{d^n} X^{n+1}\rightarrow \ldots
\]
is called {\em acyclic} or {\em exact} if each $d^i$ is admissible and $\ker(d^{i+1})=\im(d^i)$.
Let $\M$ be a full subcategory of $\E$.
We write $\mod_{adm}(\M)$ \cite[Definition 3.1 (2)]{EbrahimiNasrIsfahani21} for the full subcategory of $\Mod \M$ consisting of those functors $F$ that admit a projective presentation
\[
\begin{tikzcd}
M^{\wedge}\ar[r,"f^{\wedge}"]&N^{\wedge}\ar[r,two heads] &F\ar[r]&0
\end{tikzcd}
\]
for some morphism $f:M\rightarrow N$ in $\M$ which is admissible in $\E$.
Recall that for a full subcategory $\X$ of $\E$, we write 
\[
{^{\perp}\X}=\{E\in\E\mid\Hom_{\E}(E,X)=0\text{ for any $X\in\X$}\}.
\]
Put $\cogen(\X)=\{E\in\E\mid \exists \text{ inflation } E\rightarrowtail X \text{ with } X\in \X\}$.
A pair $(\T,\F)$ of full subcategories of $\E$ is a {\em torsion pair} \cite[Definition 2.17]{HenrardKvammevanRoosmalen22} if
\begin{itemize}
\item[(1)] For every $T\in\T$ and $F\in\F$, we have $\Hom_{\E}(T,F)=0$.
\item[(2)] For any object $M\in\E$, there exists a conflation $T\rightarrowtail M\twoheadrightarrow F$ with $T\in\T$ and $F\in \F$.
\end{itemize} 

\begin{theorem}[{\cite{Grevstad22}}]\label{thm:Grevstad}
{\rm
There is a bijection between the following:
\begin{itemize}
\item[(1)] Equivalence classes of partial $d$-precluster tilting subcategories  $\M$ of exact categories $\E$ with enough injectives.
\item[(2)] Equivalence classes of exact categories $\E'$ with enough projectives, where we denote by $\P=\proj(\E')$ the full subcategory of projectives, satisfying the following conditions 
\subitem{($a$)} $\dom(\E')\geq d+1$.
\subitem($b$) Each projective $P$ has injective dimension $\leq d+1$, i.e.~$\Ext^{\geq d+2}_{\E'}(-,P)=0$.
\subitem($c$) Any morphism $X\rightarrow E$ with $E\in {^{\perp}\P}$ is admissible.
\subitem($d$) $\P$ is {\em admissibly covariantly finite}, i.e.~for each object $E$ in $\E$, there is a left $\P$-approximation $E\rightarrow P$ which is admissible.
\end{itemize}
The bijection from (1) to (2) sends $(\E,\M)$ to $\mod_{adm}(\M)$.
}
\end{theorem}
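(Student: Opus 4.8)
The plan is to read off Theorem~\ref{thm:Grevstad} from Theorem~\ref{thm:Iyama--Solberg correspondence} by restricting the latter bijection to the $d$-precluster tilting triples $(\K,\P,\I)$ for which $\tau_{\leq 0}\B^{(d+1)}_{\K,\P,\I}$ is concentrated in degree zero --- equivalently, for which conditions (a) and (b) of Proposition~\ref{prop:Quillenexact} hold --- and then matching the two resulting classes of data with Grevstad's. Throughout I regard an exact category as a connective exact dg category concentrated in degree zero.

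For the passage from Theorem~\ref{thm:Grevstad}(1) to (2), I would start from a partial $d$-precluster tilting subcategory $\M$ of an exact category $\E$ with enough injectives, set $\E'=\mod_{adm}(\M)$, let $\P$ be the full subcategory of projectives of $\E'$ (the Yoneda image of $\M$), let $\I\subseteq\P$ be the full subcategory of projective-injective objects of $\E'$ --- which, as $\M$ is a generator-cogenerator of $\E$, is identified with $\inj\E$ --- and let $\K$ be the full dg subcategory of $\D^b_{dg}(\E')$ on the $(d+1)$-st syzygies of objects of $\E'$. The technical heart of this direction is to show that $\E'$ is a weakly idempotent complete exact category which is $d$-minimal Auslander--Gorenstein; this is a computation in $\mod_{adm}(\M)$ relative to the exact substructure $F_{\M}$ of Definition~\ref{def:relativestructure}, in which conditions (a),(b) of Definition~\ref{def:d-precluster tilting exact} produce enough projectives and cogeneration by $\I$, condition (c) produces $\Ext^i_{\E'}(\P,\P)=0$ for $0<i<d$, and condition (d) produces $\dom(\E')\geq d+1$ together with $\Id_{\E'}P\leq d+1$ for projectives $P$. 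Granting this, Lemma~\ref{lem:d-minimal AG exact dg category} applied to $\A=\E'$ shows that $(\K,\P,\I)$ is $d$-precluster tilting, Proposition~\ref{prop:Quillenexact} shows it satisfies conditions (a),(b) there, and $\E'\iso\tau_{\leq 0}\B^{(d+1)}_{\K,\P,\I}$; I would also verify directly that Grevstad's admissibility conditions (2)(c),(d) hold for $\E'=\mod_{adm}(\M)$.

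For the passage from Theorem~\ref{thm:Grevstad}(2) to (1), I would take $\E'$ with enough projectives satisfying (2)(a)--(d). Since $\E'$ has enough projectives and (a),(b) hold, $\E'$ is $d$-minimal Auslander--Gorenstein, so Theorem~\ref{thm:Iyama--Solberg correspondence} attaches to it the triple $(\K_{\E'},\P_{\E'},\I_{\E'})$, which by Proposition~\ref{prop:Quillenexact} satisfies conditions (a),(b). Using the admissibility conditions (2)(c),(d) I would then reconstruct the exact category $\E$ with enough injectives together with the partial $d$-precluster tilting subcategory $\M$: roughly, $\M$ is the image of $\P=\proj(\E')$ under a fully faithful exact embedding $\E'\hookrightarrow\E$, where $\E$ is built from $\I$ in the spirit of the functor $G$ of Lemma~\ref{lem:injectives}, and conditions (2)(c),(d) are what guarantees that the approximations and cosyzygies needed to check the axioms of Definition~\ref{def:d-precluster tilting exact} stay inside the honest exact categories $\E,\E'$ rather than in their triangulated hulls. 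Finally, that the two assignments are mutually inverse and exchange the equivalence relations --- exact equivalences of the $\E'$'s, resp.\ of the pairs $(\E,\M)$, on Grevstad's side, and equivalences of triples in the sense of Definition~\ref{def:equ} on ours --- I would deduce from the corresponding statements in Theorem~\ref{thm:Iyama--Solberg correspondence} and Lemma~\ref{lem:d-minimal AG exact dg category}(e).

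The main obstacle is precisely this translation layer, which I would isolate as Proposition~\ref{prop:generalizationpre} and Lemma~\ref{lem:generalizationpre}: proving that $\mod_{adm}(\M)$ is a weakly idempotent complete, $d$-minimal Auslander--Gorenstein exact category for every partial $d$-precluster tilting $\M$ in an exact category with enough injectives, conversely reconstructing $(\E,\M)$ from such $\E'$, and carrying out the bookkeeping that matches Grevstad's admissibility conditions with the exactness criterion of Proposition~\ref{prop:Quillenexact}. This is the analogue, for an arbitrary weakly idempotent complete exact category with enough injectives, of the module-theoretic computations of Subsection~\ref{subsec:IS}, with the added difficulty that one can no longer pass freely to a triangulated ambient category and must keep track of which morphisms are admissible and of weak idempotent completeness.
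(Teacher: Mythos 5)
This statement is quoted, not proved, in the paper: it is attributed to \cite{Grevstad22}, and the remark immediately following it explains that it is the restriction of Grevstad's Theorem 3.8 obtained from his Proposition 6.3 together with lemmas of Henrard--Kvamme--van Roosmalen. What the paper itself proves (Proposition~\ref{prop:generalizationpre} and Lemma~\ref{lem:generalizationpre}) is only that the assignment $\M\mapsto(\K,\M,\I)$ lands in the class of $d$-precluster tilting triples satisfying the hypotheses of Proposition~\ref{prop:Quillenexact}, and that composing with the bijection of Theorem~\ref{thm:Iyama--Solberg correspondence} recovers $\M\mapsto\mod_{adm}(\M)$; in the displayed diagram this top map is only an injection, not a bijection onto the class described in (2). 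Your proposal, which tries to re-derive Grevstad's bijection from Theorem~\ref{thm:Iyama--Solberg correspondence}, is therefore a genuinely different route from the paper's, and it breaks down exactly where the paper deliberately stops.

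Concretely, the direction $(2)\Rightarrow(1)$ is not established. Given $\E'$ satisfying (2)(a)--(d), Theorem~\ref{thm:Iyama--Solberg correspondence} and Proposition~\ref{prop:Quillenexact} only hand you a triple $(\K,\P,\I)$; you must still manufacture an exact category $\E$ with enough injectives and a partial $d$-precluster tilting subcategory $\M\subseteq\E$ with $\mod_{adm}(\M)\simeq\E'$, and show that conditions (2)(c) and (2)(d) are exactly what makes this possible. Your sketch (``$\E$ is built from $\I$ in the spirit of the functor $G$ of Lemma~\ref{lem:injectives}'') does not do this: that lemma only produces a functor $G$ fully faithful on $H^0(\K)$, and only in the abelian case, whereas in Grevstad's argument the reconstruction of $\E$ goes through the torsion pair $({}^{\perp}\P,\cogen(\P))$ and is the substantive content of the theorem. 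Two further mismatches: the paper's comparison results are proved only for weakly idempotent complete $\E$, while the statement imposes no such hypothesis on the objects in (1); and you would still need to check that Grevstad's equivalence relation on pairs $(\E,\M)$ matches Definition~\ref{def:equ} for the associated triples. The honest course here is the paper's: cite Grevstad's Theorem 3.8 and Proposition 6.3 and explain, as in the remark, why the present formulation is a restriction of his result.
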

\begin{remark}
The above bijection is a restriction of the bijection in \cite[Theorem 3.8]{Grevstad22}. More precisely, in Proposition 6.3 in {\em loc.~cit}, it is shown that in the bijection of Theorem 3.8 in {\it loc.~cit}, the exact categories $\E$ has enough injectives if and only if for the corresponding exact category $\E'$ we have $\dom(\E')\geq 1$.
But since $\M$ is $d$-rigid, we have $\dom(\E')\geq d+1$ in this case.
By \cite[Lemma 4.17 (2)]{HenrardKvammevanRoosmalen22}, if $\E'$ has enough projectives and if $\P$ is admissibly covariantly finite and $\dom(\E')\geq 1$, then $({^{\perp}\P}, \cogen(\P))$ is a torsion pair. 
By \cite[Lemma 4.15]{HenrardKvammevanRoosmalen22}, if $\E'$ has enough projectives and $\dom(\E')\geq 1$, then $ {^{\perp}\P}$ is closed under admissible subobjects.
Since $\dom(\E')\geq d+1$, a similar proof of \cite[Lemma 4.14]{HenrardKvammevanRoosmalen22} shows that $\Ext^{1\sim d}_{\E}({^{\perp}\P},\P)=0$.
Therefore the exact categories appearing in (2) of the above bijection are partial $d$-minimal Auslander--Gorenstein in the sense of \cite[Definition 3.4]{Grevstad22}.
\end{remark}

Let $\E$ be a weakly idempotent complete exact category with enough injectives.
Let $\M$ be a $d$-precluster tilting subcategory and $\I$ the full subcategory of $\E$ on the injectives.
We define $\K$ to be the full dg subcategory $\D^b_{dg}(\Mod\M)$ consisting of the right $\M$-modules $\Hom_{\E}(-,K)|_{\M}$ where $K$ appears in an exact sequence
\begin{equation}\label{exactseq:K}
\begin{tikzcd}
0\ar[r]&K\ar[r]&M_d\ar[r,"f_d"]&M_{d-1}\ar[r]&\ldots\ar[r]&M_2\ar[r]&M_1\ar[r,"f_1"]&M_0
\end{tikzcd}
\end{equation}
where $M_i\in\M$ for $0\leq i\leq d$ and which remains exact after applying the functor $\Hom_{\E}(M,-)$ for each $M\in\M$.
 We identify $\M$ with the full subcategory of representable $\M$-modules.
 By Proposition~\ref{prop:generalizationpre} and Lemma~\ref{lem:generalizationpre} below, we have the following diagram
 \[
\begin{tikzcd}
\{\M\subset\E \mid \begin{matrix}\text{$\M$ a partial $d$-precluster tilting subcategory where }\\\text{$\E$ is a weakly idempotent complete exact category}\\ \text{with enough injectives}\end{matrix}\}/\sim\ar[d,hook]&\M\ar[d,mapsto]\ar[dd,mapsto, bend left=16ex,"\text{Theorem~\ref{thm:Grevstad}}"]\ar[dd,phantom,"\circlearrowleft"blue, bend left=11ex]\\
\{\text{$d$-precluster tilting triples satisfying conditions in Proposition~\ref{prop:Quillenexact}}\}/\sim\ar[d,<->]&(\K,\M,\I)\ar[d,mapsto,"\text{Theorem~\ref{thm:Iyama--Solberg correspondence}}"swap]\\
\{\text{$d$-minimal Auslander--Gorenstein categories which are exact}\}/\sim&\tau_{\leq 0}\B_{\K,\M,\I}^{(d+1)}
\end{tikzcd}
\]
\begin{proposition}\label{prop:generalizationpre}
The triple $(\K,\M,\I)$ is $d$-precluster tilting and satisfies the conditions in Proposition~\ref{prop:Quillenexact}.
\end{proposition}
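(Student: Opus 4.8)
The plan is to verify the five conditions of Definition~\ref{def:d-precluster tilting triple} together with conditions (a) and (b) of Proposition~\ref{prop:Quillenexact}, working throughout inside $\mod\M$ and $\D^b_{dg}(\Mod\M)$. First I would record the basic structural facts: since $\E$ is weakly idempotent complete with enough injectives and $\M$ is partial $d$-precluster tilting, the Yoneda-type functor $\E \to \Mod\M$, $E\mapsto \Hom_\E(-,E)|_\M$ sends admissible morphisms to morphisms with good exactness properties, and the subcategory $\mod_{adm}(\M)$ of Theorem~\ref{thm:Grevstad} is precisely the essential image relevant to us. Property (1) (connectivity of $\P=\M$) and condition (a) of Proposition~\ref{prop:Quillenexact} ($\K$ concentrated in non-negative degrees, $\P$ in degree zero) are immediate since everything is built as a full dg subcategory of $\D^b_{dg}(\Mod\M)$ with $\M$ an ordinary additive category. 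Property (2) --- covariant finiteness of $H^0(\I)$ in $H^0(\B^{(d)}_{\K,\M,\I})$ --- should follow from the fact that $\I=\inj\E$ is contained in $\M$ together with the relative injective dimension bound in Definition~\ref{def:d-precluster tilting exact}(d): an object of $\B^{(d)}_{\K,\M,\I}$ is assembled from $d$-fold extensions of objects of $\K$, each of which, by the defining exact sequence~\eqref{exactseq:K}, has a bounded $\I$-coresolution; a Horseshoe-Lemma induction then produces the required inflation into $\I$.

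Next I would treat properties (4) and (5). Property (5), $\K\subseteq \M\ast\Sigma\K$ in $\pretr(\K)$, comes directly from the shape of the sequences~\eqref{exactseq:K}: given such a $K$, the admissible right $\M$-approximation of Definition~\ref{def:d-precluster tilting exact}(b) yields a deflation $M'\twoheadrightarrow K$ in $\E$ whose kernel again sits in an analogous sequence, hence lies in $\K$, and this deflation becomes a triangle in $\tr(\K)\iso\D^b(\mod_{adm}\M)$. Property (4), the Ext-vanishing $\Hom_{\tr(\K)}(\M,\Sigma^{\geq i}\K)=0$ and $\Hom_{\tr(\K)}(\K,\Sigma^{\geq i}\M)=0$, is the heart of the matter. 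The first vanishing holds because the objects of $\M$ are projective in $\mod_{adm}\M$. For the second, I would dimension-shift along~\eqref{exactseq:K}: $\Ext^{j}_{\mod\M}(K^\wedge, M^\wedge)$ is isomorphic to a shifted Ext of the right end against $M$, and the relative injective dimension bound $\Id_{F_\M}M<d$ together with $d$-rigidity of $\M$ forces these to vanish for $j\geq 1$; here one uses that the sequence~\eqref{exactseq:K} stays exact after $\Hom_\E(M,-)$, which is exactly the condition built into the definition of $\K$, so the comparison between $\E$-Ext groups and $\mod\M$-Ext groups goes through.

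Property (3), extension-closedness of $\K$ in $\tr(\K)$, then follows from (4) and (5) by the standard Horseshoe argument used repeatedly in the paper (cf.~the proof of Lemma~\ref{lem:d-minimal AG exact dg category}(b),(c)): given a triangle $K'\to X\to K''\to\Sigma K'$ with $K',K''\in\K$, splice together their defining sequences~\eqref{exactseq:K} to exhibit $X$ in the same form, using Ext-vanishing to lift the maps. Finally, condition (b) of Proposition~\ref{prop:Quillenexact} --- the exactness equivalence between the complex~\eqref{cplx:hom} of $\Hom(-,I)$'s and the complex~\eqref{cplx:K} of $\Hom(P,-)$'s --- is where the relation ${^{\perp_{d-1}}\M}={\M^{\perp_{d-1}}}$ (more precisely its exact-category analogue) enters, analogously to Lemma~\ref{lem:taud} and the module-category case: exactness of~\eqref{cplx:hom} for all $I\in\I$ says the corresponding complex over $\E$ is acyclic (since $\I$ cogenerates), and then the relative-Ext bound turns acyclicity-against-$\I$ into acyclicity-against-$\M$. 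I expect property (4), and in particular the careful bookkeeping translating the relative injective dimension bound in $\E$ into Ext-vanishing in $\mod\M$ via the dimension shift along~\eqref{exactseq:K}, to be the main obstacle; everything else is a Horseshoe-Lemma bootstrap of the kind already carried out several times in the paper.
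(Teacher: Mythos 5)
Your overall plan coincides with the paper's: check the five properties of Definition~\ref{def:d-precluster tilting triple} together with conditions (a) and (b) of Proposition~\ref{prop:Quillenexact}, with (1), (5), (a) and the covariant finiteness in (2) handled essentially as the paper does. The genuine gap is in the step you yourself single out as the main obstacle, property (4), and specifically in the vanishing $\Ext^{\geq 1}_{\Mod\M}(F(K),F(M))=0$. You propose to dimension-shift $\Ext^{j}_{\mod\M}(K^{\wedge},M^{\wedge})$ along the coresolution (\ref{exactseq:K}) inside $\mod\M$. But since the terms $M_i^{\wedge}$ are projective there, shifting along that coresolution expresses $\Ext^{1}(K^{\wedge},M^{\wedge})$ as $\Ext^{d+2}$ of a cosyzygy against $M^{\wedge}$, i.e.\ it trades the problem for a bound on the injective dimension of $M^{\wedge}$ in $\mod\M$ --- which is essentially what the proposition is meant to deliver, so the argument is circular as stated. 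The paper's proof separates this into two steps: first it dimension-shifts in the \emph{relative} exact structure $F_{\M}$ on $\E$, where the hypothesis $\Id_{F_{\M}}M<d$ actually lives, obtaining $\Ext^{1}_{F_{\M}}(K,M)=0$; then it iterates the admissible right $\M$-approximations of Definition~\ref{def:d-precluster tilting exact} to build a projective resolution of $F(K)$ in $\Mod\M$ which remains exact under $\Hom_{\E}(-,M)$ \emph{because} of that relative vanishing, and reads off the Ext-vanishing from this resolution. Your sentence ``the comparison between $\E$-Ext groups and $\mod\M$-Ext groups goes through'' is precisely the point that requires this construction and is not automatic.

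Two smaller misalignments. For condition (b) of Proposition~\ref{prop:Quillenexact} you invoke an exact-category analogue of ${^{\perp_{d-1}}\M}={\M^{\perp_{d-1}}}$; this symmetry is condition (ii) of the module-theoretic Definition~\ref{def:d-precluster tilting} and is \emph{not} part of Definition~\ref{def:d-precluster tilting exact}. What is actually used is that weak idempotent completeness and enough injectives force the complex (\ref{cplx:HK}) to be exact in $\E$ once (\ref{cplx:hom}) is exact for all $I\in\I$, after which $d$-rigidity together with $\Ext^{1\sim d-1}_{\E}(M,K_{d+1})=0$ (a consequence of the defining coresolution of $K_{d+1}$) yields exactness after $\Hom_{\E}(M,-)$. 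For property (3), splicing the defining coresolutions of $K'$ and $K''$ via the Horseshoe Lemma needs the middle terms to be injective (or an Ext-vanishing against all the intermediate cosyzygies, which you do not have); the paper first upgrades each defining coresolution to one with terms in $\I$ and only then applies the Horseshoe Lemma. Both points are repairable, but as written the central Ext-vanishing step does not close.
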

\begin{proof}
The inclusion $\K\hookrightarrow \D^b_{dg}(\Mod\M)$ induces an inclusion $\pretr(\K)\hookrightarrow\D^b_{dg}(\Mod\M)$.
It is clear that $\K$ is concentrated in non-negative degrees.
So condition (a) in Proposition~\ref{prop:Quillenexact} and property (1) in Definition~\ref{def:d-precluster tilting triple} hold.

By Definition~\ref{def:d-precluster tilting exact} (b), the functor $F: \E\rightarrow \Mod\M$ sending $X\in\E$ to $\Hom_{\E}(-,X)|_{\M}$ is fully faithful.
We thus identify $H^0(\K)$ with the corresponding full subcategory of $\E$.
By definition of $\K$ and that $\M$ is $d$-rigid, we have that $\Ext^{1\sim d-1}_{\E}(M,K)=0$ for $M\in \M$ and $K\in H^0(\K)$.
Let us show that $\Ext^1_{F_{\M}}(K,M)=0$ for $K\in H^0(\K)$ and $M\in \M$.
Indeed, we have conflations in $F_{\M}$
\[
\begin{tikzcd}
K\ar[r, tail]&N_d\ar[r,two heads]&V_d
\end{tikzcd}
\]
\[
\begin{tikzcd}
V_d\ar[r, tail]&N_{d-1}\ar[r,two heads]&V_{d-1}
\end{tikzcd}
\]
\[
\ldots
\]
\[
\begin{tikzcd}
V_3\ar[r, tail]&N_2\ar[r,two heads]&V_2
\end{tikzcd}
\]
where $N_i\in \M$ for $2\leq i\leq d$.
Then we have 
\[
\Ext^1_{F_{\M}}(K,M)\iso \Ext^2_{F_{\M}}(V_d,M)\iso\ldots \iso \Ext^d_{F_{\M}}(V_2,M)=0
\]
since the relative injective dimension $\Id_{F_{\M}}M$ is strictly less than $d$ for $M\in \M$. 

Now we show that $\Ext^{\geq 1}_{\Mod\M}(F(K), F(M))=0$ for $K\in H^0(\K)$ and $M\in \M$ and so property (4) in Definition~\ref{def:d-precluster tilting triple} holds.
By Definition~\ref{def:d-precluster tilting exact} (b), there exists a conflation in $\E$
\begin{equation}\label{confl:K}
\begin{tikzcd}
K'\ar[r,tail,"i"]&M'\ar[r,two heads,"p"]&K
\end{tikzcd}
\end{equation}
where $p$ is a right $\M$-approximation of $K$. It follows that $K'$ also lies in $H^0(\K)$. Therefore property (5) in Definition~\ref{def:d-precluster tilting triple} holds. Now the conflation (\ref{confl:K}) also lies in $F_{\M}$. Since $\Ext^1_{F_{\M}}(K,M)=0$, the morphism $i$ induces a surjection $\Hom_{\E}(M',M)\rightarrow\Hom_{\E}(K',M)$.
So we have an exact sequence in $F_{\M}$
\[
\begin{tikzcd}
\ldots\ar[r]&M^j\ar[r]&\ldots\ar[r]&M^1\ar[r]&M^0\ar[r,two heads]&K
\end{tikzcd}
\]
which remains exact after applying the functor $\Hom_{\E}(-,M)$ for each $M\in\M$. 
After applying the functor $F:\E\rightarrow \Mod\M$, we obtain a projective resolution of $F(K)$. Since the functor $F$ is fully faithful when restricted to $H^0(\K)$, we conclude that $\Ext^{\geq 1}_{\Mod\M}(F(K),F(M))=0$.

Condition (b) in Proposition~\ref{prop:Quillenexact} can be checked as follows: consider a complex
\begin{equation}\label{cplx:exact1}
\begin{tikzcd}
0\ar[r]&K_{d+1}\ar[r]& M_d\ar[r]&M_{d-1}\ar[r]&\ldots\ar[r]&M_1\ar[r]&M_0
\end{tikzcd}
\end{equation}
where $K_{d+1}\in H^0(\K)$ and $M_i\in \M$ for $0\leq i\leq d$.
Suppose that it remains exact after applying the functor $\Hom_{\E}(-,I)$ for each $I\in\I$.
Since $\E$ has enough injectives and is weakly idempotent complete, we see that the morphism $K_{d+1}\rightarrow M_d$ is an inflation which is part of a conflation in $\E$
\[
\begin{tikzcd}
K_{d+1}\ar[r,tail]&M_d\ar[r,two heads]&U_d.
\end{tikzcd}
\]
Then the map $M_d\rightarrow M_{d-1}$ factors through $U_d$ uniquely and similarly the map $U_d\rightarrow M_{d-1}$ is an inflation. We repeat the above argument and we see that the above sequence (\ref{cplx:exact1}) is exact. 
Since $\M$ is $d$-rigid and $\Ext^{1\sim d-1}_{\E}(M,K_{d+1})=0$ for $M\in \M$, we see that the sequence (\ref{cplx:exact1}) remains exact after applying the functor $\Hom_{\E}(M,-)$ for each $M\in\M$.

Let $K$ be any object in $H^0(\K)$. 
Since $\E$ has enough injectives, we have an exact sequence
\[
\begin{tikzcd}
0\ar[r]&K\ar[r]& I_d\ar[r]&I_{d-1}\ar[r]&\ldots\ar[r]&I_1\ar[r]&I_0
\end{tikzcd}
\]
where $I_i\in \I$ for $0\leq i\leq d$.
By the above, we see that the above sequence remains exact after applying the functor $\Hom(M,-)$ for each $M\in \M$.
Hence, in the sequence (\ref{exactseq:K}), we could assume that $M_i\in \I$ for $0\leq i\leq d$.
Therefore, by the Horseshoe Lemma we see that $\K$ is extension-closed in $\pretr(\K)\subseteq \D^b_{dg}(\Mod\M)$ and so property (3) in Definition~\ref{def:d-precluster tilting triple} holds.

Let us show property (2), i.e.~$H^0(\I)$ is covariantly finite in 
\[
\B_{\K,\M,\I}^{(d)}=\K\ast\Sigma\K\ast\ldots\ast\Sigma^d\K=\M\ast\Sigma\M\ast\ldots\ast\Sigma^{d-1}\M\ast\Sigma^{d}\K.
\]
Objects in $\B_{\K,\M,\I}^{(d)}$ are given by exact sequences in $\E$ 
\begin{equation}\label{exactseq:approx}
\begin{tikzcd}
0\ar[r]&K\ar[r,"f_{d+1}"]&M_d\ar[r,"f_d"]&M_{d-1}\ar[r]&\ldots\ar[r]&M_2\ar[r,"f_2"]&M_1
\end{tikzcd}
\end{equation}
 where $K\in H^0(\K)$ and $M_i\in \M $ for $0\leq i \leq d$. Indeed, the above sequence splits into conflations
 \[
 \begin{tikzcd}
 K\ar[r,tail]&M_d\ar[r,two heads] &U_d,
 \end{tikzcd}
 \]
 \[
 \begin{tikzcd}
 U_d\ar[r,tail]&M_{d-1}\ar[r,two heads] &U_{d-1},
 \end{tikzcd}
 \]
 \[
 \ldots
 \]
  \[
 \begin{tikzcd}
 U_3\ar[r,tail]&M_2\ar[r,two heads] &U_2.
 \end{tikzcd}
 \]
 And the object in $\B_{\K,\M,\I}^{(d)}$, which corresponds to the exact sequence (\ref{exactseq:approx}), is given by the cokernel of the morphism $F(U_2)\rightarrow F(M_1)$.
 Since the morphism $f_2:M_2\rightarrow M_1$ is admissible, we take its cokernel $M_1\rightarrow L$ and we have an inflation $L\rightarrowtail I$ for some $I\in \I$.
Then the composition $M_1\rightarrow I$ gives rise to a morphism from the sequence (\ref{exactseq:approx}) to $I$. It is straightforward to  check that it is a left $\I$-approximation.
\end{proof}

\begin{lemma}\label{lem:generalizationpre}
As above, the functor $F:\B'=\tau_{\leq 0}\B_{\K,\M,\I}^{(d+1)}\rightarrow \Mod\M$ in Lemma~\ref{lem:Quillenexact} induces an equivalence of exact categories $F: \B'\rightarrow \mod_{adm}(\M)$.
\end{lemma}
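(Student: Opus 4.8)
The goal is to show that the fully faithful exact functor $F \colon \B' = \tau_{\leq 0}\B_{\K,\M,\I}^{(d+1)} \hookrightarrow \Mod\M$ from Lemma~\ref{lem:Quillenexact} has essential image exactly $\mod_{adm}(\M)$, and that this identification is an equivalence of exact categories. By Lemma~\ref{lem:Quillenexact} (which applies because $(\K,\M,\I)$ satisfies the conditions in Proposition~\ref{prop:Quillenexact}, by Proposition~\ref{prop:generalizationpre}), we already know $F$ is fully faithful, exact, and induces bijections on $\Ext^1$. So the only thing left is \emph{density onto $\mod_{adm}(\M)$}, i.e.\ to prove that the essential image of $F$ equals $\mod_{adm}(\M)$, and then to observe that an exact fully faithful functor which is essentially surjective and $\Ext^1$-bijective onto a subcategory closed under the relevant structure is automatically an exact equivalence.

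\textbf{First step: the essential image is contained in $\mod_{adm}(\M)$.}
Take $X \in \B'$. By the definition of $\B' = \tau_{\leq 0}\B_{\K,\M,\I}^{(d+1)}$, the object $X$ sits in a chain of conflations in $H^0(\B')$ with $\M$-projective terms, whose $(d+1)$-st syzygy $K_{d+1}$ lies in $H^0(\K)$; in particular $X$ admits a presentation $M_1 \to M_0 \twoheadrightarrow X$ (conflations (\ref{conf:X0})) with $M_0, M_1 \in \M$. Under $F$ this gives a projective presentation $M_1^\wedge \to M_0^\wedge \to FX \to 0$ in $\Mod\M$. The point is that the inducing morphism $M_1 \to M_0$ is \emph{admissible in $\E$}: it factors as $M_1 \twoheadrightarrow X_1 \rightarrowtail M_0$ where $X_1 \rightarrowtail M_0$ is the inflation in the first conflation of (\ref{conf:X0}) and $M_1 \twoheadrightarrow X_1$ is the deflation in the second. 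Hence $FX \in \mod_{adm}(\M)$ by the defining presentation of $\mod_{adm}(\M)$ recalled before Proposition~\ref{prop:generalizationpre}.

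\textbf{Second step: $\mod_{adm}(\M)$ is contained in the essential image.}
Conversely, take $G \in \mod_{adm}(\M)$ with admissible presentation $M^\wedge \xrightarrow{f^\wedge} N^\wedge \twoheadrightarrow G$, $f \colon M \to N$ admissible in $\E$, say $f = (M \twoheadrightarrow Y \rightarrowtail N)$. Then $Y \in \E$ and $FY = \Hom_\E(-,Y)|_\M$. Using that $\E$ has enough injectives and enough $\M$-approximations, I want to build, starting from $Y \rightarrowtail N =: M_0$, a chain of conflations $Y \rightarrowtail M_0 \twoheadrightarrow Y^1$, $Y^1 \rightarrowtail M_1 \twoheadrightarrow Y^2$, $\ldots$ of length exactly $d+1$, where at each stage the deflations are right $\M$-approximations (so the cosyzygies lie in $H^0(\K)$ as in Proposition~\ref{prop:generalizationpre}), and where $Y^{d+1} \in H^0(\K)$. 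That shows $Y \in H^0(\B')$, and $FY \cong G$ because both are cokernels of $M^\wedge \to M_0^\wedge$ (using full faithfulness of $F$ and the defining presentation of $G$). One must check that the presentation length $d+1$ is available, which is where $d$-rigidity of $\M$ and $\Id_{F_\M}M < d$ enter, exactly as in the $\Ext$-vanishing computations in the proof of Proposition~\ref{prop:generalizationpre}.

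\textbf{Third step: exactness of the equivalence.}
Once $F \colon \B' \to \mod_{adm}(\M)$ is fully faithful and essentially surjective, it remains to check it is an \emph{exact} equivalence, i.e.\ that a sequence in $\B'$ is a conflation iff its image is. One direction is Lemma~\ref{lem:Quillenexact} ($F$ is exact). For the converse one uses that $\mod_{adm}(\M)$ carries the exact structure in which conflations are precisely the short exact sequences of functors all of whose terms lie in $\mod_{adm}(\M)$, together with the $\Ext^1$-bijection $\Ext^1_{\B'}(X,X') \cong \Ext^1_{\Mod\M}(FX,FX')$ from Lemma~\ref{lem:Quillenexact}: a short exact sequence of objects in the image is classified by an element of $\Ext^1_{\Mod\M}$, which is the image of a unique class in $\Ext^1_{\B'}$, giving a conflation in $\B'$ mapped isomorphically to the given sequence.

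\textbf{Main obstacle.}
The routine parts are full faithfulness and exactness of $F$ (already done in Lemma~\ref{lem:Quillenexact}) and the containment of the essential image in $\mod_{adm}(\M)$. The genuine work is the reverse containment: given $G \in \mod_{adm}(\M)$, one must produce an object of $H^0(\B')$ mapping to it, and the subtlety is arranging that the constructed chain of $\M$-approximation conflations has \emph{exactly} length $d+1$ with $(d+1)$-st term in $H^0(\K)$ — i.e.\ that the extra rigidity and relative-injective-dimension hypotheses of a partial $d$-precluster tilting subcategory force the syzygy to stabilize into $H^0(\K)$ after $d+1$ steps. This is essentially a bookkeeping reprise of the $\Ext$-vanishing arguments already carried out in Proposition~\ref{prop:generalizationpre}, but it must be done carefully to match the definition of $\K$ via exact sequences (\ref{exactseq:K}).
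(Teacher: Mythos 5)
There is a genuine gap in your second step (density onto $\mod_{adm}(\M)$), and it is the step on which the whole lemma turns. Given $G\in\mod_{adm}(\M)$ with admissible presentation $M^{\wedge}\xrightarrow{f^{\wedge}}N^{\wedge}\twoheadrightarrow G$ and $f=(M\twoheadrightarrow Y\rightarrowtail N)$, you propose to realize $G$ as $FY$. But $FY=\Hom_{\E}(-,Y)|_{\M}$ is (via $\iota^{\wedge}$) a \emph{sub}module of $N^{\wedge}$ containing $\im(f^{\wedge})$, whereas $G=\cok(f^{\wedge})=N^{\wedge}/\im(f^{\wedge})$ is a quotient; these are not isomorphic in general (take $f=0$: then $Y=0$ but $G=N^{\wedge}$). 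So exhibiting $Y$ in $H^0(\B')$ does not put $G$ in the essential image, and your construction also runs in the wrong direction: you build an $\M$-coresolution of $Y$ (cosyzygies $Y^1,Y^2,\dots$), whereas an object $X\in\B'$ with $FX\cong G$ must have a chain of conflations (\ref{conf:X0}) with $P_0=N$, $P_1=M$ and deeper \emph{syzygies}. The correct argument (the paper's) extends the presentation to the left: set $Z_2=\ker(M\twoheadrightarrow Y)$ and iteratively choose right $\M$-approximation deflations $P_{i+1}\twoheadrightarrow Z_{i+1}$ using Definition~\ref{def:d-precluster tilting exact}\,(b), producing an exact sequence $0\to K_{d+1}\to P_d\to\cdots\to P_1\to P_0$ that remains exact under $\Hom_{\E}(M',-)$ for all $M'\in\M$; then $K_{d+1}\in H^0(\K)$ by the very definition of $\K$ via (\ref{exactseq:K}), the sequence splices into conflations (\ref{conf:X0}) defining $X\in\B'$, and $FX=\cok(f^{\wedge})\cong G$. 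Note also that your ``main obstacle'' diagnosis is off: no $d$-rigidity or relative-injective-dimension input is needed for density --- the length $d+1$ is available for free from the approximation axiom together with how $\K$ is defined; those hypotheses were consumed in Proposition~\ref{prop:generalizationpre}.

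Two smaller remarks. In your first step, the factorization of $P_1\to P_0$ through the syzygy $X_1$ is a priori a factorization in $H^0(\B')\subseteq\Mod\M$, not in $\E$; to conclude admissibility in $\E$ you must identify $X_1$ with an object of $\E$ and check the two maps are a deflation and an inflation there, which the paper does by the $\Hom_{\E}(-,\I)$-exactness argument from the verification of condition (b) in Proposition~\ref{prop:generalizationpre} (enough injectives plus weak idempotent completeness). Your third step (reflecting conflations via the $\Ext^1$-bijection of Lemma~\ref{lem:Quillenexact}) is fine and in fact more explicit than the paper, which leaves this point implicit.
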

\begin{proof}
Recall that we identify $H^0(\K)$ with the corresponding subcategory of $\E$ via the functor $\E\rightarrow \Mod\M$ given by sending $E\in \E$ to $\Hom_{\E}(-,E)|_{\M}$.
Let $X$ be an object in $\B'$ with conflations (\ref{conf:X0}) where $K_{d+1}\in H^0(\K)$ and $P_i\in \M$ for $0\leq i\leq d$.
Then the object $F(X)\in \Mod \P$ is given by the cokernel of the morphism $P_1^{\wedge}\rightarrow P_0^{\wedge}$.
It suffices to show that the morphism $P_1\rightarrow P_0$ is admissible in $\E$.
We can check this in a same way as we check condition (b) in the proof of Proposition~\ref{prop:generalizationpre}.
Therefore the functor $F$ gives rise to an exact functor $F:\B'\rightarrow \mod_{adm}(\M)$.
Conversely, for each object $U$ in $\mod_{adm}(\M)$, it is the cokernel of $P_1^{\wedge}\xrightarrow{f^{\wedge}} P_0^{\wedge}$ for some admissible morphism $f: P_1\rightarrow P_0$ in $\M$.
By Definition~\ref{def:d-precluster tilting exact} (2), we may complete it to an exact sequence in $\E$
\begin{equation}
\begin{tikzcd}\label{exact:K}
0\ar[r]&K_{d+1}\ar[r]& P_d\ar[r]&P_{d-1}\ar[r]&\ldots\ar[r]&P_1\ar[r]&P_0
\end{tikzcd}
\end{equation}
where $P_i\in \M$ for $0\leq i\leq d$.
It follows that $K_{d+1}\in H^0(\K)$ and the exact sequence (\ref{exact:K}) gives rise to conflations (\ref{conf:X0}) in $\B'$.
By definition, we have that $F(X)$ is isomorphic to the cokernel of $P_1^{\wedge}\xrightarrow{f^{\wedge}} P_0^{\wedge}$, which is in turn isomorphic to $U$.
\end{proof}

	\def\cprime{$'$} \def\cprime{$'$}
	\providecommand{\bysame}{\leavevmode\hbox to3em{\hrulefill}\thinspace}
	\providecommand{\MR}{\relax\ifhmode\unskip\space\fi MR }
	\providecommand{\MRhref}[2]{%
		\href{http://www.ams.org/mathscinet-getitem?mr=#1}{#2}
	}
	\providecommand{\href}[2]{#2}
%


	\bibliographystyle{amsplain}
	\bibliography{stanKeller}

\def\cprime{$'$} \def\cprime{$'$}
\providecommand{\bysame}{\leavevmode\hbox to3em{\hrulefill}\thinspace}
\providecommand{\MR}{\relax\ifhmode\unskip\space\fi MR }
\providecommand{\MRhref}[2]{%
  \href{http://www.ams.org/mathscinet-getitem?mr=#1}{#2}
}
\providecommand{\href}[2]{#2}
\begin{thebibliography}{10}

\bibitem{Auslander71}
Maurice Auslander, \emph{Representation dimension of {A}rtin algebras}, Lecture
  notes, Queen Mary College, London, 1971.

\bibitem{Auslander76}
\bysame, \emph{Functors and morphisms determined by objects}, Representation
  theory of algebras (Proc. Conf., Temple Univ., Philadelphia, Pa., 1976),
  Dekker, New York, 1978, pp.~1--244. Lecture Notes in Pure Appl. Math., Vol.
  37.

\bibitem{BondalKapranov90}
A.~I. Bondal and M.~M. Kapranov, \emph{Enhanced triangulated categories}, Mat.
  Sb. \textbf{181} (1990), no.~5, 669--683, translation in Math. USSR-Sb. 70
  no. 1, 93--107.

\bibitem{BondalLarsenLunts04}
Alexey~I. Bondal, Michael Larsen, and Valery~A. Lunts, \emph{Grothendieck ring
  of pretriangulated categories}, Int. Math. Res. Not. (2004), no.~29,
  1461--1495.

\bibitem{Chen23}
Xiaofa Chen, \emph{On exact dg categories}, Ph.~D.~thesis, Universit\'e Paris
  Cit\'e, July 2023, arXiv:2306.08231 [math.RT].

\bibitem{Chen23a}
\bysame, \emph{Auslander--{I}yama correspondence for exact dg categories},
  arXiv:2308.08519 [math.RT], 2023.

\bibitem{Cisinski19}
Denis-Charles Cisinski, \emph{Higher categories and homotopical algebra},
  Cambridge Studies in Advanced Mathematics, vol. 180, Cambridge University
  Press, Cambridge, 2019.

\bibitem{DraxlerReitenSmaloSolberg99}
Peter Dr\"{a}xler, Idun Reiten, Sverre~O. Smal\o, and {\O}yvind Solberg,
  \emph{Exact categories and vector space categories}, Trans. Amer. Math. Soc.
  \textbf{351} (1999), no.~2, 647--682, With an appendix by B. Keller.

\bibitem{Drinfeld04}
Vladimir Drinfeld, \emph{D{G} quotients of {DG} categories}, J. Algebra
  \textbf{272} (2004), no.~2, 643--691.

\bibitem{EbrahimiNasrIsfahani21}
Ramin Ebrahimi and Alireza Nasr-Isfahani, \emph{Higher {A}uslander
  correspondence for exact categories}, arXiv:2108.13645 [math.RT], 2021.

\bibitem{GabrielZisman67}
P.~Gabriel and M.~Zisman, \emph{Calculus of fractions and homotopy theory},
  Ergebnisse der Mathematik und ihrer Grenzgebiete, Band 35, Springer-Verlag
  New York, Inc., New York, 1967.

\bibitem{GorskyNakaokaPalu23}
Mikhail Gorsky, Hiroyuki Nakaoka, and Yann Palu, \emph{Hereditary
  extriangulated categories: silting objects, mutation, negative extensions},
  arXiv:2303.07134 [math.RT], 2023.

\bibitem{Grevstad22}
Jacob Grevstad, \emph{Higher {A}uslander--{S}olberg correspondence for exact
  categories}, arXiv:2207.05547 [math.RT], 2022.

\bibitem{HenrardKvammevanRoosmalen22}
Ruben Henrard, Sondre Kvamme, and Adam-Christiaan van Roosmalen,
  \emph{Auslander's formula and correspondence for exact categories}, Adv.
  Math. \textbf{401} (2022), Paper No. 108296, 65.

\bibitem{Iyama07}
Osamu Iyama, \emph{Auslander correspondence}, Adv. Math. \textbf{210} (2007),
  no.~1, 51--82.

\bibitem{Iyama07a}
\bysame, \emph{Higher-dimensional {A}uslander-{R}eiten theory on maximal
  orthogonal subcategories}, Adv. Math. \textbf{210} (2007), no.~1, 22--50.

\bibitem{IyamaSolberg18}
Osamu Iyama and {\O}yvind Solberg, \emph{Auslander--{G}orenstein algebras and
  precluster tilting}, Adv. Math. \textbf{326} (2018), 200--240.

\bibitem{Jin20}
Haibo Jin, \emph{Cohen-{M}acaulay differential graded modules and negative
  {C}alabi-{Y}au configurations}, Adv. Math. \textbf{374} (2020), 107338, 59.

\bibitem{Tabuada05}
Gon\c{c}alo Tabuada, \emph{Une structure de cat\'egorie de mod\`eles de
  {Q}uillen sur la cat\'egorie des dg-cat\'egories}, C. R. Math. Acad. Sci.
  Paris \textbf{340} (2005), no.~1, 15--19.

\bibitem{Toen07}
Bertrand To{\"e}n, \emph{The homotopy theory of {$dg$}-categories and derived
  {M}orita theory}, Invent. Math. \textbf{167} (2007), no.~3, 615--667.

\bibitem{Wu23a}
Yilin Wu, \emph{Relative cluster categories and {H}iggs categories}, Adv. Math.
  \textbf{424} (2023), Paper No. 109040.

\end{thebibliography}

\end{document}